\newcommand{\bz}{\mathbb Z}
\newcommand{\cN}{\mathcal N}
\newcommand{\cc}{\mathcal C}
\newcommand{\cd}{\mathcal D}
\newcommand{\al}{\alpha}
\newcommand{\be}{\beta}
\DeclareMathOperator{\Aut}{Aut}
\DeclareMathOperator{\Mod}{Mod}
\DeclareMathOperator{\Link}{Link}
\newcommand{\cb}{\mathcal{CB}}
\newcommand{\tc}{\mathcal{TC}}
\newtheorem{Thm}{Theorem}[section]
\newtheorem{Prop}[Thm]{Proposition}
\newtheorem{Lem}[Thm]{Lemma}
\newtheorem{Cor}[Thm]{Corollary}
\newtheorem{Fact}[Thm]{Fact}
\theoremstyle{definition}
\newtheorem{Def}[Thm]{Definition}
\theoremstyle{remark}
\numberwithin{equation}{section}
\newtheorem*{namedtheorem}{\theoremname}
\newcommand{\theoremname}{testing}
\newtheorem*{rep@theorem}{\rep@title}
\newcommand{\newreptheorem}[2]{%
\newenvironment{rep#1}[1]{%
 \def\rep@title{#2 \ref{##1}}%
 \begin{rep@theorem}}%
 {\end{rep@theorem}}}
\title{Automorphisms of the compression body graph}
\author{Ian Biringer}
\address{Department of Mathematics \\ Boston College}
\email{ian.biringer@bc.edu}
\urladdr{https://www2.bc.edu/ian-p-biringer/}
\author{Nicholas G.~Vlamis}
\address{Department of Mathematics \\ University of Michigan}
\email{vlamis@umich.edu}
\urladdr{http://nickvlamis.com}
\begin{document}

\begin{abstract}
When $S$ is a closed, orientable surface with genus $g(S) \geq 2$, we show that the automorphism group of the compression body graph $\cb(S)$ is the mapping class group.  Here, vertices are compression bodies with exterior boundary $S$, and edges connect pairs of compression bodies where one contains the other.
\end{abstract}

\maketitle

\section{Introduction}

A \textit{compression body} is a compact, orientable, irreducible $3$-manifold $C$ with a distinguished `exterior' boundary component $\partial_+ C$, such that the inclusion $\partial_+ C \longrightarrow C$ is a $\pi_1$-surjective.  Fixing a closed, orientable surface $S $, an \textit{$S$-compression body} is a pair $(C,f)$ where $C$ is a compression body and $f\co S \to \partial_+C$ is a homeomorphism. 

Any $S$-compression body can be constructed as follows, see Lemma \ref{compressdiscs}. Starting with $S\times [0,1]$, attach $2$-handles along a collection of disjoint essential annuli in $S\times\{0\}$ and then glue a 3-ball onto every resulting spherical boundary component.  Here, the exterior boundary is $S\times\{1\}$, which clearly $\pi_1$-surjects, and has a natural identification with $S $.  Two extreme examples of this construction occur when the collection of annuli is empty, in which case we obtain the \emph {trivial compression body} $S \times [0,1]$, and when the collection is large enough so that after attaching the two-handles, \emph{every} interior boundary component is a sphere, in which case  $\partial C=S \times \{ 1\}$ and $C$ is a \emph {handlebody}.

Two $S$-compression bodies $(C,f)$ and $(D,g)$ are \emph {isomorphic} if there is a homeomorphism $H : C \longrightarrow D$ such that $H \circ f =g $. We also say that $(C,f)$ \emph {is contained in} $(D,g)$ if there is an embedding $H : C \longrightarrow D$ such that $H \circ f =g $. It follows (see \S \ref{csec}) that $(C,f)$ and $(D,g)$ are isomorphic if and only if each is contained in the other.

The \emph {compression body graph}, written ${\mathcal {CB}}(S)$, is the graph whose vertices are isomorphism classes of nontrivial $S$-compression bodies, and where $(C,f),(D,g)$ are adjacent if either 
$$(C,f) \subset (D,g) \ \text{ or } \ (D,g) \subset (C,f).$$

The \emph {mapping class group} of $S $, written $\Mod(S)$, is the group of isotopy classes of self-homeomorphisms $\phi$ of $S$. It acts on $\mathcal {CB}(S)$ by precomposing the markings: $$(C,f) \overset{\phi}{\longrightarrow} (C,f\circ \phi^{-1}).$$

\begin {Thm}\label{main}
When $g(S)\geq 2$, the natural map $\Mod(S) \longrightarrow \Aut(\mathcal {CB}(S))$ is a surjection.
\end {Thm}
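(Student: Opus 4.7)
The plan is to reduce to Ivanov's theorem (and its extensions by Luo and Korkmaz), which asserts that every automorphism of the curve complex $\mathcal{C}(S)$ is induced by a mapping class when $g(S)\geq 2$. Given $\phi\in\Aut(\cb(S))$, we aim to find a mapping class inducing $\phi$. The essential building blocks are the \emph{rank-one} vertices: by Lemma~\ref{compressdiscs}, an $S$-compression body is built from $S\times[0,1]$ by attaching $2$-handles along essential annuli and then capping spheres, and a rank-one vertex uses exactly one $2$-handle. These correspond bijectively to essential simple closed curves $\alpha\subset S$ via the curve isotopic to the boundary of the unique compressing disk; denote this vertex $V_\alpha$. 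Since the disk set of $V_\alpha$ equals $\{\alpha\}$, the vertex $V_\alpha$ is minimal under containment, with no non-trivial proper sub-compression body.

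The first and central task is a purely graph-theoretic characterization of the rank-one vertices, so that any $\phi\in\Aut(\cb(S))$ permutes them. The difficulty is that containment is a directed relation while $\cb(S)$ records only its comparability graph. One approach is to use that every vertex lies in a maximum clique of size $g(S)$, corresponding to a maximal chain of compression bodies from a rank-one up to a handlebody; rank-one vertices should be identifiable as the ``minimal endpoints'' of these chains via structural properties of their closed neighborhoods, such as the failure of the link to decompose as a nontrivial graph join (since at a middle-rank vertex, the sub- and super-compression bodies form a completely joined pair of sets in the link). I expect this to be the main obstacle.

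Once rank-one vertices are identified, $\phi$ induces a bijection $\psi$ on the essential simple closed curves of $S$. The key compatibility is that $\psi$ preserves disjointness: two curves $\alpha,\beta$ can be realized disjointly on $S$ if and only if both are meridians of a common $S$-compression body, i.e., if and only if $V_\alpha$ and $V_\beta$ share a common neighbor in $\cb(S)$, a property $\phi$ preserves. Hence $\psi$ is an automorphism of the $1$-skeleton of $\mathcal{C}(S)$. By Ivanov (with small adjustments in genus two), $\psi$ is induced by some $f\in\Mod(S)$, and we may replace $\phi$ by $f_*^{-1}\phi$ to assume $\phi$ fixes every rank-one vertex.

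To finish, observe that every $(C,g)\in\cb(S)$ is determined up to isomorphism by its disk set $\mathcal{D}(C)\subset\mathcal{C}(S)$, and $\mathcal{D}(C)$ is exactly the set of $\alpha$ for which $V_\alpha$ is a rank-one subbody of $(C,g)$. After Step~1 this set is recoverable from the adjacency structure of $\cb(S)$, so $\phi$ fixing all rank-one vertices and preserving adjacency forces it to fix every $(C,g)$, whence $\phi=\mathrm{id}$.
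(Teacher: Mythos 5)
Your plan has two load-bearing claims that are both false, and together they keep the Ivanov reduction from going through.

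First, you assert that the disk set of a small compression body $V_\alpha=S[\alpha]$ is $\{\alpha\}$, so that $V_\alpha$ is minimal under containment. That holds only when $\alpha$ is \emph{separating} (or $S$ is a torus). When $g(S)\geq 2$ and $\alpha$ is non-separating, Proposition~\ref{smalldisc} shows $\mathcal D(S[\alpha])$ also contains every curve $\partial T$ for $T$ a once-punctured torus containing $\alpha$ — infinitely many separating meridians. So $S[\alpha]$ properly contains nontrivial sub-compression bodies (it has height $2$, not $1$). Any ``minimal endpoint'' characterization you devise will only find the separating small compression bodies, and from those alone you cannot recover a mapping class. The paper instead detects the \emph{height} of a vertex from its link (chromatic and clique numbers of the up- and down-link are $2g-1-\mathfrak h$ and $\mathfrak h-1$, Lemma~\ref{lem:chromatic}), and then needs Lemma~\ref{lem:sepheight} to separate non-separating small compression bodies from the other height-$2$ vertices $S[a_1,a_2]$.

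Second, the disjointness criterion is wrong: having $V_\alpha$ and $V_\beta$ share a common neighbor does not imply $i(\alpha,\beta)=0$. A shared \emph{upper} neighbor just says $\alpha,\beta$ are both meridians of some common compression body; a genus-$g$ handlebody already has pairs of intersecting (even filling) meridians. So the relation you extract from $\cb(S)$ is not the curve-complex edge relation, and Ivanov's theorem does not apply as stated. The relation you \emph{can} extract from $\cb(S)$ is different: for non-separating $a,b$, the compression bodies $S[a]$ and $S[b]$ share a common \emph{lower} neighbor iff $a$ and $b$ lie in a common once-punctured torus (and when $g(S)=2$ even that needs an extra argument). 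This is precisely why the paper introduces the torus complex $\tc(S)$ and proves a new rigidity theorem for it (Theorem~\ref{thm:torus-aut}, via Schmutz's graph of curves intersecting once), rather than passing directly through $\mathcal C(S)$ and Ivanov. Without that intermediate step, the reduction you propose has a genuine gap.

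The final step of your plan — that an automorphism fixing all small compression bodies is determined by disk sets — is close in spirit to the paper's Proposition~\ref{fixnonsep} (the paper only needs to fix the non-separating ones and then chases through subsurface-projection estimates), but it cannot be reached without repairing the two issues above.
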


Here $g(S)$ is the genus of the surface $S$. Note that when $S$ is a torus the theorem is false, since then ${\mathcal {CB}}(S)$ is an infinite graph with no edges. 

The action of $\Mod(S)$ is faithful except when $S$ has genus two, in which case the kernel is generated by the hyperelliptic involution. This follows from the analogous statement about the action of the mapping class group on the complex of curves, since any simple closed curve $\alpha$ on $S$ gives a \emph {small compression body} $S[a]$ obtained by attaching a two-handle along an annulus framing $\alpha$ on $S\times \{0\}$, and $\Mod(S)$ acts on these small compression bodies via the defining curves, see \S \ref{csec}. 

Metrically, the compression body graph is $\delta$-hyperbolic and has infinite diameter.  This follows from as yet unpublished work of Maher-Schleimer, who study a \emph {handlebody graph} that is quasi-isometric to ${\mathcal {CB}}(S)$.  We find the fine structure of ${\mathcal {CB}}(S)$ more natural, but Maher-Schleimer should be credited as the first to study the notion of distance between handlebodies or compression bodies defined by such graphs.

The compression body graph is an example of a \emph {comparability graph}, where an edge joins vertices that are comparable in a partial order. As such, it is \emph {perfect,} i.e.\ the chromatic and clique numbers of all subgraphs agree. Such graph invariants come up briefly below; for instance, Lemma \ref{lem:chromatic} implies that the chromatic and clique numbers of $\cb(S)$ are $2g-1$.

The inspiration for Theorem \ref{main} is the celebrated theorem of Ivanov \cite{Ivanovautomorphisms}, see also Luo \cite{Luoautomorphisms}, that the automorphism group of the curve graph is $\Mod(S) $. Here, the \emph{curve graph} is the graph $\cc(S)$ whose vertices are isotopy classes of simple closed curves on $S$, and edges connect isotopy classes that admit disjoint representatives. Ivanov used his theorem to conclude that the isometry group of Teichm\"uller space, regarded with the Teichm\"uller metric, is also $\Mod(S) $, and that the outer automorphism group of the mapping class group is trivial. Since then, there have been a number of papers proving similar rigidity results for complexes associated to a surface $S$, e.g.\ the complex of non-separating curves \cite{Irmakcomplexes}, and the pants complex \cite{Margalitautomorphisms}.  

The action of the mapping class group on ${\mathcal {CB}}(S)$ encodes a wealth of information about the interaction of mapping classes and $3$-manifolds. For instance, an element $\phi \in \Mod(S)$ fixes an $S$-compression body $(C,f)$ if and only if the homeomorphism $f \circ \phi \circ f^{-1}$ of $\partial_+ C $ extends to a homeomorphism of $C$. Extension into compression bodies has been previously studied by Casson-Long \cite{Cassonalgorithmic}, Long \cite{Longdiscs,Longbounding}, Biringer-Johnson-Minsky \cite{Biringerextending} and Ackermann \cite{Ackermannalternative}, among others. In studying the cobordism group of surface automorphisms, Bonahon \cite[Prop 5.1]{Bonahoncobordism} shows that when a homeomorphism of a surface $S $ extends to a $3$-manifold $M$ with $\partial M=S$, it also extends to a $3$-manifold in which all the non-periodic action happens on the union of a compression body and an interval bundle. 

For the proof of Theorem \ref{main}, we introduce an auxiliary simplicial complex, which is of independent interest. The \textit{torus complex}, denoted $\tc(S)$, is the simplicial complex whose vertices are isotopy classes of non-separating simple closed curves, and where a collection of vertices $\{a_0, \ldots, a_k\}$ spans a $k$-simplex if there exists a punctured torus $T\subset S$ such that $a_i$ can be isotoped to be contained in  $T$ for all $1\leq i \leq k$.  

\begin{Thm}
\label{thm:torus-aut}
For $g(S)\geq 2$, the natural map $\Mod(S) \to \Aut(\tc(S))$ is a surjection.
\end{Thm}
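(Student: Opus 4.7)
My plan is to reduce to Irmak's theorem \cite{Irmakcomplexes}, which says that for $g(S)\geq 2$ the automorphism group of the non-separating curve graph $\cN(S)$—whose vertices are non-separating simple closed curves and whose edges record disjointness—is $\Mod(S)$. Given $\varphi\in\Aut(\tc(S))$, the natural action on vertices is a bijection on non-separating curves; the goal is to produce a mapping class inducing $\varphi$.

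The first step is to combinatorially recover the set of isotopy classes of embedded once-punctured tori $T\subset S$ from $\tc(S)$. For each such $T$ let $V_T$ denote the set of non-separating curves contained in $T$. Every finite subset of $V_T$ spans a simplex of $\tc(S)$, and I claim the $V_T$'s are precisely the maximal vertex subsets with this property. The key input is that whenever $a,b$ are non-separating with $i(a,b)=1$, the regular neighborhood $N(a\cup b)$ is the unique once-punctured torus containing $a\cup b$: any once-punctured torus $T'\supseteq a\cup b$ contains $N(a\cup b)$ and has the same Euler characteristic $-1$, forcing $T'$ to be isotopic to $N(a\cup b)$. Inside each $V_T$ the Farey graph furnishes many intersection-one pairs, so adjoining any $b\notin V_T$ to $V_T$ would create a triple $\{a_1,a_2,b\}$ lying in no common torus (the only candidate being $T$, which does not contain $b$), contradicting simplex-closure. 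A similar Farey-pair argument rules out maximal simplex-closed subsets not of the form $V_T$. Hence $\varphi$ permutes the $V_T$'s, inducing a bijection on isotopy classes of once-punctured tori.

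Next I translate disjointness of non-separating curves into a combinatorial condition. The claim is: two non-separating curves $a,b$ are disjoint in $S$ if and only if there exist once-punctured tori $T_a,T_b$ with $a\in V_{T_a}$, $b\in V_{T_b}$, with $V_{T_a}\cap V_{T_b}=\emptyset$ and no edge of $\tc(S)$ between them. For the forward direction one constructs the required tori: for $g(S)\geq 3$ the complement of $a\cup b$ is large enough to house disjoint punctured tori around $a$ and $b$; for $g(S)=2$ the four-holed sphere $S\setminus(a\cup b)$ contains a simple closed curve separating the two sides of $a$ from the two sides of $b$, giving a genus-one separating curve in $S$ with once-punctured tori on both sides. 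The no-edge condition follows because two non-isotopic simple closed curves in a common once-punctured torus always intersect, so curves lying in tori with disjoint interiors can never share a common torus. Applying $\varphi$ preserves this characterization, so $\varphi$ induces an automorphism of $\cN(S)$, which by Irmak's theorem is realized by some $f\in\Mod(S)$. Since $f$ and $\varphi$ agree on all non-separating curves and a simplicial automorphism of $\tc(S)$ is determined by its vertex action, $f$ induces $\varphi$ on all of $\tc(S)$.

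The main obstacle is establishing the two combinatorial characterizations rigorously. Identifying every maximal simplex-closed subset as a $V_T$ requires handling configurations of curves with pairwise intersection $\geq 2$ that could in principle lie in several punctured tori; the argument succeeds only because intersection-one pairs are plentiful in each genuine $V_T$. The reverse direction of the disjointness characterization is subtler still—one must confirm that intersecting curves cannot mimic the disjoint-tori pattern for any choice of $T_a,T_b$—and this is most delicate in genus $2$, where the complement of two non-separating curves offers the least topological room.
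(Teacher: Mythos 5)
Your route is genuinely different from the paper's. The paper's strategy is to combinatorially detect, from the simplicial structure of $\tc(S)$, when two vertices $a,b$ have geometric intersection number exactly one, and then to invoke Schmutz's theorem on the graph whose edges record intersection-number-one pairs. The key technical input is Proposition \ref{prop:intersection}, that at most one edge of an empty triangle in $\tc(S)$ joins curves intersecting more than once; this, together with a Farey-coordinate count, shows $i(a,b)=1$ is equivalent to the existence of infinitely many empty triangles $a,b,c$ with $a,c$ in a fixed punctured torus. You instead aim to detect disjointness and invoke Irmak's theorem. These are both legitimate destinations, and your first step---recovering the vertex sets $V_T$ of once-punctured tori as the maximal simplex-closed vertex subsets---is actually sound and simpler than you fear, because any pair of non-isotopic non-separating curves contained in a common once-punctured torus already determines that torus up to isotopy (they fill it), not just intersection-one pairs; so the worry about pairs with $i\geq 2$ lying in several tori evaporates.

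The genuine gap is exactly the one you flag in your last paragraph: the reverse direction of your disjointness characterization. You need to show that if $a,b$ intersect, then \emph{every} choice of once-punctured tori $T_a\ni a$, $T_b\ni b$ with $V_{T_a}\cap V_{T_b}=\emptyset$ produces an edge of $\tc(S)$ between $V_{T_a}$ and $V_{T_b}$. Your stated argument (``tori with disjoint interiors can never share a common torus'') only covers the case where $T_a$ and $T_b$ are actually disjoint as subsurfaces, which is the \emph{forward} direction; it says nothing about overlapping $T_a,T_b$ that merely share no non-separating curve. In genus $2$, where any two distinct separating curves intersect, every such $T_a,T_b$ \emph{must} overlap, so the reverse direction cannot follow from that observation. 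What is actually needed is a topological analysis of overlapping once-punctured tori comparable in difficulty to the paper's Proposition \ref{prop:intersection} and Lemma \ref{lem:arcs}; nothing in the proposal supplies it. Until that step is proved, the induced map on the non-separating curve graph is not known to be an automorphism and Irmak's theorem cannot be applied, so the argument is incomplete as written.
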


In other words, every bijection of the set of non-separating simple closed curves on $S$ that preserves when curves lie in a punctured torus is given by a mapping class.

As in Theorem \ref{main}, this map is an isomorphism except when $S$ has genus two, in which case the kernel is generated by the hyperelliptic involution.  The relationship between $\tc(S)$ and $\cb(S)$ is described in the following proof sketch.

\subsection{Sketch of the proof of Theorem \ref {main}}

We will outline here the proof of the main theorem, modulo results to be proved later. A full proof will be given at the end of the paper in Section \ref{sec:proof}.

Suppose that $f : \cb(S)\longrightarrow\cb(S)$ is an automorphism. In Proposition \ref{prop:small-invariant}, we show that $f$ preserves the set of small compression bodies $S[a]$, those that are obtained from $S$ by compressing a single curve $a$. Moreover, $f$ preserves whether the compressing curve is non-separating or separating. Briefly, the idea is that small compression bodies are (among) those with small \emph{height}, a notion of complexity introduced in \S\ref{sec:height}, and that the height of a compression body $C$ is encoded in the chromatic number of certain subsets of the link of $C\in \cb(S)$. This is the subject of Section \ref{small}.

In particular, $f$ acts on the set of non-separating simple closed curves on $S$. This action has the property that it preserves when a set of non-separating curves comes from a single punctured torus $T\subset S$.  When $g(S)\geq 3$, this is because two non-separating curves $a,b$ lie in a punctured torus if and only if the compression bodies $S[a]$ and $S[b]$ contain a common sub-compression body, while $g(S)=2$ requires an additional argument. This leads us to consider the \emph{torus complex} $\tc(S)$.

Section \ref{torus} is dedicated to proving Theorem \ref{thm:torus-aut} and is entirely separate from the rest of the paper.
Consequently, the action of $f$ on the set of non-separating small compression bodies agrees with the action of mapping class $\phi\in \Mod(S)$. We then show that the actions of $f$ and $\phi$ agree on all of $\cb(S)$, using that a compression body is determined by the small compression bodies it contains.

\subsection{Acknowledgements} The authors would like to thank Joseph Maher and Saul Schleimer for helpful conversations. The first author was partially supported by NSF grant DMS-1308678.

\section{Compression bodies}
\label{csec}

A \textit{compression body} is a compact, orientable, irreducible $3$-manifold $C$ with a $\pi_1$-surjective boundary component $\partial_+ C$, called the \emph {exterior boundary} of $C$. The complement $\partial C \smallsetminus \partial_+ C$ is called the \emph {interior boundary}, and is written $\partial_- C$. Note that the interior boundary is incompressible.  For if an essential simple closed curve on $\partial_- C$ bounds a disk $D \subset C$, then $C \smallsetminus  D$ has either one or two components, and in both cases, Van Kampen's Theorem implies that $\partial_+ C$, which is disjoint from $D$, cannot $\pi_1$-surject. 

Let $S$ be a closed, orientable surface.  In the introduction, we defined an \textit{$S$-compression body} as a pair $(C,f)$ where $f\co S \longrightarrow \partial_+C$ is a homeomorphism. 
Throughout the rest of the paper, we will suppress the marking $f$, and consider compression bodies whose exterior boundaries are implicitly identified with $S$.  
With this new language, two $S$-compression bodies are \emph{isomorphic} if they are homeomorphic via a map that is the identity on their exterior boundaries, and an $S$-compression body $C$ is \emph {contained in} $D$, written $C \subset D$, if there is an embedding of $C$ into $D$ that is the identity on the exterior boundary. Often, we will just view $C $ as a submanifold of $D$ that shares its exterior boundary.

 If $\{a_1, \ldots, a_k\}$ is a collection of disjoint simple closed curves on $S$, let $S[a_1, \ldots, a_k]$ be the $S$-compression body obtained by \emph {compressing} each of the curves $a_i$. This means that we attach two-handles to $S \times [0,1]$ along a collection of annuli on $S\times \{0\}$ whose core curves are the $a_i $, fill in $S^2$-boundary components with balls, and identify $S$ with $S \times\{1\}$. We will call $\{a_1, \ldots, a_k\}$ a \textit{compressing system} for $S[a_1, \ldots, a_k]$.

A simple closed curve on $S$ is called a \emph {disk}, or \emph {meridian}, of an $S$-compression body $C$ if it bounds an embedded disk in $C$. The \emph {disk set} of an $S$-compression body $C$, written $\cd(C)$, is the set of (isotopy classes of) meridians of $C$. 

\begin {Lem}\label {compressdiscs}
If $\{a_1,\ldots, a_k\} \subset \cd(C)$ is a collection of disjoint meridians of $C$, then $S[a_1, \ldots, a_k] \subset C$. Moreover, if $\{a_1,\ldots, a_k\} $ is maximal, $S[a_1, \ldots, a_k] = C$.
\end {Lem}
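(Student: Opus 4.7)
My plan is to prove the lemma in two stages: first, to produce the inclusion $S[a_1,\ldots,a_k]\subset C$ from disjoint compressing disks bounded by the $a_i$, and second, under the maximality hypothesis, to upgrade this inclusion to an isomorphism by showing the complement consists only of product regions.

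For the containment, each $a_i$ bounds an embedded disk $D_i$ in $C$. Because $C$ is irreducible and the $a_i = \partial D_i$ are already disjoint on $\partial_+C$, a standard innermost-disk / outermost-arc argument on the intersection pattern of the $D_i$, using that each innermost disk or outermost bigon either caps off a trivial sphere (bounding a ball by irreducibility) or allows an isotopy reducing intersections, makes the $D_i$ pairwise disjoint rel boundary. A closed regular neighborhood $N_0 = \cN(\partial_+C\cup\bigcup_i D_i)$ is then homeomorphic to $S\times[0,1]$ with two-handles attached along annular neighborhoods of the $a_i\subset S\times\{0\}$. Each sphere component of $\partial N_0\setminus\partial_+C$ bounds a ball in $C$ by irreducibility (with the ball lying on the side away from $\partial_+C$, since $N_0$ has no ball components); filling these balls in yields a sub-$S$-compression body $N\subset C$ canonically isomorphic to $S[a_1,\ldots,a_k]$.

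Now assume $\{a_1,\ldots,a_k\}$ is maximal and set $W=\overline{C\setminus N}$. The crux is to show that every component $\Sigma$ of $\partial_- N$ that is not already a component of $\partial_- C$ is incompressible in $C$. By construction $\Sigma$ arises from a component $\Sigma'$ of $\partial_+C\setminus\bigcup_i a_i$ by capping its boundary circles with disks, so a hypothetical compressing disk for $\Sigma$ in $C$ (necessarily on the $W$-side, since $\Sigma$ is incompressible in $N$) may be isotoped to have boundary $\gamma\subset\Sigma'\subset\partial_+C$. One checks that $\gamma$ is disjoint from the $a_i$, bounds a disk in $C$, and is essential on $\partial_+ C$ (otherwise an innermost disk argument together with the fact that $a_i$'s are essential forces $\gamma$ null-homotopic on $\Sigma$), and not isotopic to any $a_i$ (otherwise $\gamma$ bounds a capping disk of $\Sigma$, again contradicting essentiality on $\Sigma$). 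Such a $\gamma$ would extend the collection $\{a_1,\ldots,a_k\}$, contradicting maximality.

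Given the incompressibility, I would invoke the standard structure theorem for compression bodies — every closed incompressible surface is isotopic to a component of the interior boundary — to conclude that each such $\Sigma$ cobounds a product region $\Sigma\times[0,1]$ in $W$ with a component of $\partial_- C$. A short disjointness argument shows that $W$ is the union of these product regions together with collars on the components of $\partial_- C$ that happen not to meet $\partial_- N$. Hence the inclusion $N\hookrightarrow C$ is an isomorphism of $S$-compression bodies, as required. I expect the main technical obstacle to be the incompressibility step, specifically extracting the genuinely new meridian $\gamma$ from a hypothetical compressing disk of $\Sigma$ and ruling out the cases where $\gamma$ is inessential or isotopic to some $a_i$; these require careful use of irreducibility and of the essentiality of the $a_i$ on $\partial_+C$.
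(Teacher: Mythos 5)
Your proof is correct and follows essentially the same strategy as the paper's: take a regular neighborhood of $\partial_+C\cup\bigcup_i D_i$, cap off spherical boundary components with balls using irreducibility of $C$ to obtain $N\cong S[a_1,\ldots,a_k]$, then under maximality show each component of $\partial_- N$ is incompressible in $C$ by extracting from any compressing disk a new meridian disjoint from the $a_i$, which contradicts maximality. The only real difference is in the last step, where you invoke the classification of closed incompressible surfaces in a compression body (they are parallel to $\partial_-$) to force $W=\overline{C\setminus N}$ to consist of product regions, whereas the paper observes that each component of $W$ has a $\pi_1$-surjective incompressible boundary component and applies Waldhausen's Cobordism Theorem directly; these are essentially equivalent, since the structure theorem you cite is standardly derived via exactly such a Waldhausen argument.
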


In particular, any compression body can be constructed from $S$ as above, by compressing a collection of simple closed curves and filling in spheres with balls.
\begin {proof}
As the meridians $\{a_1,\ldots, a_k\} $ are disjoint, they bound a collection of disjoint disks $D_i$ in $C$. By irreducibility, every $2$-sphere boundary component of a neighborhood of the union $S \cup \bigcup_i D_i$ bounds a ball in $C $. So, filling in these boundary components gives a submanifold of $C$ homeomorphic to $S[a_1, \ldots, a_k]$.

Now assume that the collection $\{a_1,\ldots, a_k\} $ is maximal. The interior boundary components of $S[a_1, \ldots, a_k]$ are then incompressible in $C$: if not, a simple closed curve on an interior boundary component that compresses in $C \setminus S[a_1, \ldots, a_k]$ can be homotoped to a new meridian on $S$ that is disjoint from the collection $\{a_1,\ldots, a_k\} $. So, each component of $C \setminus S[a_1, \ldots, a_k]$ has a $\pi_1$-surjective, incompressible boundary component, so is an interval bundle by Waldhausen's Cobordism Theorem \cite{Waldhausenirreducible}.
\end {proof}

The following is an immediate consequence of Lemma \ref{compressdiscs}.

\begin {Cor}\label {disccont}
Let $C,D$ be $S$-compression bodies. Then $C$ and $D$ are isomorphic if and only if $\cd(C)=\cd(D)$, and $C\subseteq D$ if and only if $\cd(C)\subseteq \cd(D)$.
\end {Cor}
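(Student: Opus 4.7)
The two "only if" directions are essentially formal. If $C\subseteq D$, an embedding $\iota\co C\hookrightarrow D$ restricting to the identity on $S=\partial_+C=\partial_+D$ sends any meridian disk of $C$ to one of $D$, so $\cd(C)\subseteq\cd(D)$; taking embeddings in both directions gives $\cd(C)=\cd(D)$ when $C\iso D$. So the content is in the converses.

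For the containment statement, assume $\cd(C)\subseteq\cd(D)$. Choose a maximal collection $\{a_1,\ldots,a_k\}\subseteq\cd(C)$ of pairwise disjoint meridians of $C$. By the second half of Lemma \ref{compressdiscs} applied in $C$, we have $S[a_1,\ldots,a_k]=C$. On the other hand, since $\{a_1,\ldots,a_k\}\subseteq\cd(C)\subseteq\cd(D)$, the first half of Lemma \ref{compressdiscs} applied in $D$ gives $S[a_1,\ldots,a_k]\subseteq D$. Combining, $C\subseteq D$.

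For the isomorphism statement, assume $\cd(C)=\cd(D)$. Pick any collection $\{a_1,\ldots,a_k\}$ of disjoint curves that is maximal among disjoint subsets of the common disk set $\cd(C)=\cd(D)$. Maximality depends only on the set of isotopy classes, so this same collection is maximal in $\cd(C)$ and in $\cd(D)$. Applying the second half of Lemma \ref{compressdiscs} separately in $C$ and in $D$ gives $S[a_1,\ldots,a_k]=C$ and $S[a_1,\ldots,a_k]=D$, hence $C\iso D$ as $S$-compression bodies (both equal the canonical model compression body built from that compressing system). Alternatively, one can invoke the containment statement just proved in both directions together with the fact, mentioned in the introduction, that mutual containment of $S$-compression bodies implies isomorphism.

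There is no real obstacle here; the only subtle point is making sure that "maximal disjoint subfamily" is well-defined and agrees across $C$ and $D$, which is immediate once one observes that the property depends only on $\cd(C)$ as an abstract collection of isotopy classes of simple closed curves on $S$.
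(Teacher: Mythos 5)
Your proof is correct and takes precisely the route the paper intends; the paper simply labels the Corollary ``an immediate consequence of Lemma \ref{compressdiscs}'' without spelling out the details, and you have supplied them, correctly isolating the one point that needs a remark (that maximality of a disjoint subfamily depends only on the disk set as an abstract collection of isotopy classes, so it agrees across $C$ and $D$). One caution: your ``alternatively'' clause is circular within the paper's logic, because the principle that mutual containment implies isomorphism is stated in \S\ref{csec} as a \emph{consequence} of this very Corollary (the introduction just points there), so it is not available as an independent input; keep the direct argument via Lemma \ref{compressdiscs} as your proof.
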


In particular, this implies that $C$ and $D$ are isomorphic if and only if $C\subseteq D $ and $D \subseteq C$.

Compression bodies can also be constructed as boundary connected sum of closed balls and interval bundles $F_i \times [0,1]$, where the $F_i$ are closed, orientable surfaces, and the boundary connected sums are always performed along $F_i \times \{1\}$. See Figure \ref {alien}. Here, such sums of irreducible $3$-manifolds are irreducible, and the union of the $F_i \times \{1\}$ with the boundaries of the balls and the $1$-handles is a $\pi_1$-surjective boundary component.

Lemma \ref{compressdiscs} shows that every compression body can be so constructed. If $C=S[a_1, \ldots, a_k] $, let $F_i$ be the surfaces obtained by \emph{surgering} $S$ along disks  $D_1,\ldots,D_k \subset C$ with boundary $a_1,\ldots,a_k$. (Here, if $S_i$ is a component of $S\setminus \left (a_1 \cup \cdots \cup a_k\right)$, then $F_i$ is obtained from $S_i$ by attaching the adjacent disks to its boundary components.) Each $F_i\subset C$ bounds either a ball, if $F_i$ is a sphere, or an interval bundle $F_i \times [0,1]$. These pieces are attached along the disks $D_j$, which expresses $C $ as a boundary connected sum.

\begin {figure}
\centering
\includegraphics{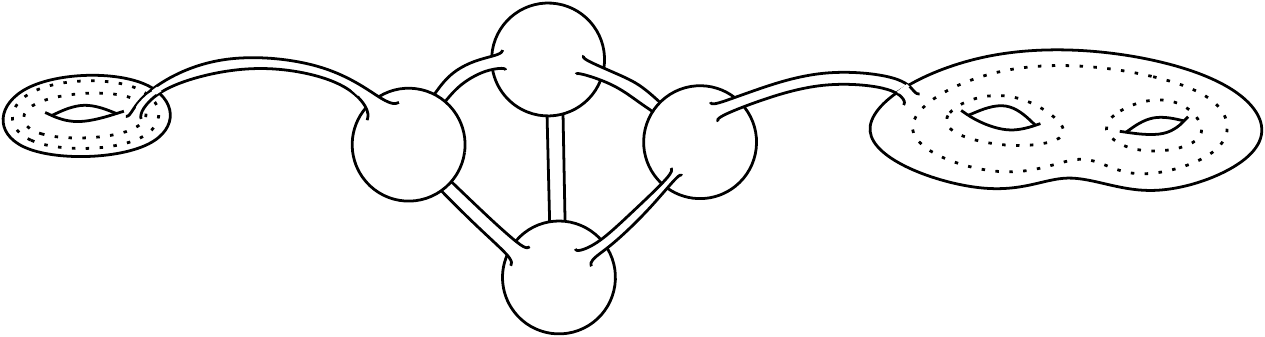}
\caption {A boundary connected sum of four balls and interval bundles over a torus and a genus two surface. Here, the boundary connected sum of the four balls is a genus 2 handlebody.}
\label {alien}
\end {figure}
\begin {Cor}\label {onlyone}
A compression body is determined up to homeomorphism by the genera of its boundary components.
\end{Cor}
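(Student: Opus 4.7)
The plan is to put every compression body into an explicit normal form depending only on the boundary genera. By Lemma~\ref{compressdiscs} and the decomposition recorded in the paragraph preceding the corollary, any $S$-compression body $C$ is a boundary connected sum of $3$-balls and interval bundles $F_i \times [0,1]$, glued along a family of disjoint compression disks, with all interval-bundle gluings performed along $F_i \times \{1\}$. The interval-bundle summands are indexed by the interior boundary components of $C$, so their genera $g_1, \ldots, g_n$ are read off directly from $\partial_- C$.

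I would then consolidate this decomposition into the normal form
$$
C \;\cong\; H_\beta \,\natural\, (F_1 \times [0,1]) \,\natural\, \cdots \,\natural\, (F_n \times [0,1]),
$$
where $H_\beta$ is a handlebody of genus $\beta$ and $\natural$ denotes boundary connected sum. The consolidation rests on three standard properties: that $M \,\natural\, B^3 \cong M$, so that balls glued via a single disk contribute nothing; that gluing two disks within a single piece is the same as attaching a $1$-handle, equivalently boundary-connect-summing with a solid torus; and that, once one fixes the boundary components on which the gluing disks lie, boundary connected sum of orientable $3$-manifolds is associative and commutative. Iterating these moves pulls all of the handle-like structure into a single handlebody summand, producing the normal form.

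Finally, I would determine $\beta$ using the identity $\chi(C) = \tfrac{1}{2}\chi(\partial C)$, which holds for any compact orientable $3$-manifold. Applied to $C$ or to the normal form above, a short calculation yields
$$
g(\partial_+ C) \;=\; \beta + g_1 + \cdots + g_n,
$$
so $\beta = g(\partial_+ C) - \sum_i g_i$ is a function of the boundary genera alone. Since every other ingredient of the normal form is likewise recorded by those genera, $C$ is determined up to homeomorphism by them.

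The main obstacle is justifying the second step: that any two ways of consolidating balls, interval bundles, and disks into a single handlebody summand produce homeomorphic results. This is really the assertion that boundary connected sum of orientable $3$-manifolds with marked boundary components is a well-defined operation, which in turn reduces to the uniqueness of essential disks in a $3$-ball and the transitive action of $\Homeo(\Sigma)$ on the space of disks in an orientable surface $\Sigma$. These facts are standard, so the remainder of the argument is bookkeeping.
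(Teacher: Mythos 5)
Your argument is correct and, in substance, the same as the paper's: both pass through the boundary connected sum decomposition from the paragraph preceding the corollary, both normalize it to the form (handlebody) $\natural$ (interval bundles attached by single $1$-handles), and both rest crucially on well-definedness of boundary connected sum, i.e.\ the uniqueness (up to ambient isotopy) of a disk on a fixed boundary component. Where you differ is in bookkeeping. The paper's consolidation move is a single global step, namely replace each interval-bundle vertex in the dual graph by a ball vertex and reattach the interval bundle by one new $1$-handle, which leaves a graph of balls and $1$-handles (automatically a handlebody, whatever the cycle structure) with interval bundles hanging off as leaves. Your three moves achieve the same thing but more atomistically: you need to apply them in the right order (absorb a spanning tree of the dual graph via associativity, turn the remaining non-tree edges into solid-torus summands via your second move, and discard ball vertices via the first), and it would be worth saying this explicitly, since move (2) as literally stated only resolves a $1$-handle whose both ends already lie on a single consolidated piece. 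Finally, the paper simply reads $\beta$ off as $g(\partial_+ C) - \sum_i g(F_i)$ from the normal form, whereas you obtain the same identity via $\chi(C)=\tfrac12\chi(\partial C)$; that computation is correct and a tidy alternative, though not strictly necessary once the normal form is in hand.
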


Note that since the exterior boundary always has the largest genus, it is not necessary to label the genera as `exterior' and `interior' in the statement of the corollary.

\begin {proof}
We claim that any compression body $C$ can be obtained by attaching interval bundles $F_i \times [0,1]$ with a \emph{single} $1$-handle to a handlebody $H$. To do this, think of a boundary connected sum decomposition for $C$ as a graph, where vertices are balls and interval bundles, and edges are $1$-handles. The homeomorphism type of the compression body is unchanged if each interval bundle vertex in the graph is replaced by a ball vertex, and then that interval bundle is reattched to the new ball with an additional $1$-handle. The result is a graph of balls and $1$-handles, i.e.\ a handlebody, attached to interval bundles.

The genera of the interior boundary components determine the interval bundles, while the difference between the genus of the exterior boundary and the total genus of the interior boundary is the genus of the handlebody. 
\end {proof}

Finally, we end with a useful gluing construction.

\begin {Cor}[Exterior-to-interior gluings]\label {sum} Suppose that $C$ is a $S$-compression body with an interior boundary component $F \subset\partial_- C$, and that $D$ is an $F $-compression body. Then the natural gluing $C \sqcup_F D$ is an $S$-compression body.

Conversely, let $C \subset E$ be $S$-compression bodies, and let $\partial_- C = F_1 \sqcup \ldots \sqcup F_n$. Then $E$ is isomorphic to an $S$-compression body obtained by gluing to $C$ a collection of (possibly trivial) $F_i$-compression bodies $D_i$, one for each $i$.
\end {Cor}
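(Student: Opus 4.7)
The plan for the forward direction is to realize $C \sqcup_F D$ explicitly as the result of compressing a family of curves on $S$. First I would choose a maximal disjoint meridian system $\{a_1, \ldots, a_k\} \subset \cd(C)$ and a maximal disjoint meridian system $\{b_1, \ldots, b_m\} \subset \cd(D)$, so that Lemma \ref{compressdiscs} gives $C = S[a_1, \ldots, a_k]$ and $D = F[b_1, \ldots, b_m]$. By the construction recalled after Lemma \ref{compressdiscs}, the interior boundary component $F$ of $C$ is obtained from a component $S^F$ of $S \setminus (a_1 \cup \ldots \cup a_k)$ by capping off its boundary circles with parallel copies of the compressing disks for the adjacent $a_i$. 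Every simple closed curve on $F$ may therefore be isotoped off these caps into $S^F \subset S$, and transporting each $b_j$ in this way produces a disjoint collection $\{a_1, \ldots, a_k, b_1, \ldots, b_m\}$ of simple closed curves on $S$. I would then argue that $S[a_1, \ldots, a_k, b_1, \ldots, b_m]$ can be built in two stages: first attaching 2-handles along the $a_i$ and filling in resulting spheres with balls to produce $C$, and then, on the interior boundary $F$ of $C$, attaching 2-handles along the transported $b_j$ and filling in resulting spheres with balls. The second stage is precisely the construction of $D$ glued to $C$ along $F$, which identifies $C \sqcup_F D$ with the $S$-compression body $S[a_1, \ldots, a_k, b_1, \ldots, b_m]$.

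For the converse, I would take a maximal disjoint meridian system $\{a_1, \ldots, a_k\}$ for $C$ and extend it to a maximal disjoint meridian system $\{a_1, \ldots, a_k, c_1, \ldots, c_\ell\}$ for $E$, so that $C = S[a_1, \ldots, a_k]$ and $E = S[a_1, \ldots, a_k, c_1, \ldots, c_\ell]$ by Lemma \ref{compressdiscs}. Each $c_j$ is disjoint from the $a_i$ and hence lies in a unique component of $S \setminus (a_1 \cup \ldots \cup a_k)$, which corresponds to a unique interior boundary component $F_{i(j)}$ of $C$. The maximality of the system for $C$ prevents any $c_j$ from lying in a component that would be capped off to a sphere and filled with a ball --- for then $c_j$ would bound a disk in that ball and therefore be an additional meridian of $C$ disjoint from the $a_i$. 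For each interior boundary component $F_i$ of $C$, I would set $D_i := F_i[c_j : i(j) = i]$, taking $D_i$ to be the trivial compression body $F_i \times [0,1]$ when no $c_j$ lies in the corresponding component. Applying the forward direction iteratively to the $F_i$ then identifies $E$ with the successive gluing of $C$ to the $D_i$ along the $F_i$.

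The main obstacle is the identification step in the forward direction: verifying that the two-stage construction of $S[a_1, \ldots, a_k, b_1, \ldots, b_m]$ really coincides with $C \sqcup_F D$, with the correct identifications of exterior boundaries and the correct reinterpretation of 2-handles attached on $F$ versus on $S^F$. This amounts to a careful picture-chase using the explicit handle decomposition from Lemma \ref{compressdiscs}, but it requires some attention to the placement of the 2-handles and capping balls in the two constructions.
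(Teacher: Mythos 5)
Your proof of the converse is essentially the paper's: extend a compressing system for $C$ to one for $E$, push the new curves to $\partial_- C$, and group them by interior boundary component. (You are somewhat more careful than the paper about why none of the new curves lands in a sphere-capped component, which is a genuine point worth noting, though minor.) The forward direction, however, takes a genuinely different route. You realize $C \sqcup_F D$ explicitly as $S[a_1, \ldots, a_k, b_1, \ldots, b_m]$ by transporting $D$'s meridians across the disk caps into $S$, then verifying the one-stage handle attachment agrees with the two-stage one. The paper instead invokes the boundary connected sum decomposition into balls and interval bundles: since $F$ is an interior boundary component of $C$, the piece $F \times [0,1]$ appears in $C$'s decomposition, and gluing $D$ along $F$ merely replaces that piece by $F \times [0,1] \sqcup_F D \cong D$; the result is again a boundary connected sum of balls and interval bundles, hence an $S$-compression body. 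The paper's argument side-steps the ``careful picture-chase'' you flag as the main obstacle, since the collar-absorption step is immediate, whereas your approach is more elementary (it needs only Lemma \ref{compressdiscs}, not the boundary connected sum structure theory from Corollary \ref{onlyone}) but does put the burden on you to verify that the simultaneous handle attachment coincides with the staged one and that the isotopy of each $b_j$ off the capping disks into $S^F \subset S$ is well-behaved.
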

\begin {proof}
Represent $C$ and $D$ as boundary connected sums of balls and interval bundles. Gluing $F \times [0,1]$ to $D$ does not change its homeomorphism type, so $C \sqcup_F D$ is a boundary connected sum of the balls and interval bundles from $D$, together with all balls and interval bundles from $C$ except $F\times [0,1]$.

For the second part, extend a compressing system $a_1,\ldots,a_k$ for $C$ to a compressing system $a_1,\ldots,a_k,b_1,\ldots,b_l$ for $D $. The $b_j$ are all disjoint from $a_1,\ldots,a_k$, so are homotopic to simple closed curves $b_j'$ on the interior boundary of $C $. Then $C_i $ is the compression body defined by the compressing system consisting of all $b_j' $ that lie on $F_i $.
\end {proof}

\subsection{Small compression bodies}

Throughout this work, there will be a special class of compression bodies that we will consistently come back to, which we now define:

\begin{Def}
A \emph {small compression body} is a compression body $C$ that can be written as $S[a]$ for some simple closed curve $a\subset S=\partial_+ C$. 
\end{Def}

A solid torus is an example of a small compression body -- it has a unique meridian. 

When $S$ has genus at least two, the disk set of a small compression body $S[a]$ has a unique meridian only when $a$ is separating. We will prove this, but first we need some notation. If $a,b \in \cc(S)$ and $i(a,b) = 1$, the \textit{band sum of $a$ and $b$} is the separating curve $$B(a,b) = \partial N(a\cup b),$$ where $N(a \cup b)$ is a regular neighborhood of $a\cup b$.  Note that $B(a,b)$ is the boundary of a once-punctured torus, $N(a \cup b)$, that contains $a$. Conversely, any curve that bounds a once punctured torus $T$ containing $a $ can be expressed as a band sum $B(a,b)$, by taking $b$ to be any curve in $T$ that intersects $a $ once.

\begin{Prop}[Disk sets of small compression bodies]\label {smalldisc} Suppose that $S$ is a closed, orientable surface and $a$ is a simple closed curve on $S $. If $S$ is a torus or $a$ is separating, $$\cd(S[a]) = \{a\},$$ while if the genus $g(S)\geq 2$ and $a$ is non-separating, then \begin{align*}
\cd(S[a]) &= \{a\} \cup \{ B(a,b) :  b\in \cc(S),\ i(a,b)=1\} \\
&= \{a\} \cup \{ \partial T \colon T \subset S \text{ a punctured torus with } a \subset T \}.
\end{align*}
\end{Prop}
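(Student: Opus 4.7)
The plan is to treat three cases separately, according to whether $S$ is a torus, $a$ is separating, or $a$ is non-separating with $g(S)\geq 2$. When $S$ is a torus, $S[a]$ is a solid torus and admits a unique meridian, namely $a$, so the claim is immediate. In the other two cases I prove both inclusions of the first displayed equality; the second equality is essentially the content of the paragraph preceding the proposition.

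For the inclusion $\supseteq$ in the non-separating case, I construct an explicit compressing disk for each $\partial T$, where $T\subset S$ is a punctured torus containing $a$. Consider the submanifold
\[
N_T:=(T\times [0,1])\cup(\text{2-handle along }a\times\{0\})\subset S[a].
\]
Since $\pi_1(T)$ is free on $a$ and a complementary generator, the 2-handle kills $a$, giving $\pi_1(N_T)\cong\bz$. The boundary of $N_T$, assembled from $T\times\{1\}$, the annulus $\partial T\times[0,1]$, and the disk obtained by compressing $T\times\{0\}$ along $a$, is a torus, so $N_T$ is a solid torus. Its meridian disk can be realized with boundary $\partial T\times\{0\}$; pushing this boundary up along the annulus $\partial T\times[0,1]$ yields a compressing disk in $S[a]$ whose boundary is $\partial T\subset S$.

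For the inclusion $\subseteq$, let $c$ be a meridian and choose a compressing disk $D_c$ with $|D_c\cap D_a|$ and $|c\cap a|$ both minimal in the isotopy class. By innermost-disk arguments using irreducibility of $S[a]$, I may assume $D_c\cap D_a$ consists of arcs only. The base case is when this intersection is empty: then $D_c$ lies in $S[a]\smallsetminus D_a$, which is a trivial compression body $F\times[0,1]$ when $a$ is non-separating, or a disjoint union of two such bundles $F_i\times[0,1]$ when $a$ is separating. Neither admits essential meridians, so $c$ is inessential on the relevant capped-off surface. Analyzing which essential curves on $S$ become inessential upon capping along $a$: in the separating case, only curves isotopic to $a$ itself qualify, so $c=a$; in the non-separating case, the disk $c$ bounds on $F$ may contain one or both of the two cap disks arising from $a$, yielding either $c=a$ (one cap) or, by re-gluing the cut, $c=\partial T$ for some punctured torus $T\subset S$ with $a\subset T$ (both caps, since the pair of pants re-glues to a one-holed torus containing $a$).

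The main obstacle is ruling out $|D_c\cap D_a|>0$ at the minimum. The approach is outermost-arc surgery: an outermost arc $\alpha$ of $D_c\cap D_a$ on $D_c$ cuts off a subdisk $E\subset D_c$ with $\partial E=\alpha\cup\beta$ and $\beta\subset c$; the arc $\alpha$ splits $D_a$ into half-disks $D_a^{\pm}$ with boundary arcs $\gamma_{\pm}\subset a$. The surgered disks $E\cup D_a^{\pm}$ are compressing disks in $S[a]$ with boundaries $\beta\cup\gamma_{\pm}\subset S$ and strictly fewer arcs of intersection with $D_a$. The delicate step, which I view as the technical heart of the proof, is to show that minimality of $|D_c\cap D_a|$ combined with the base case above forces $c$ itself into the claimed set --- not merely the surgered meridians. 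The homological observation that $[c]\in\langle[a]\rangle\subset H_1(S)$, and hence $\hat i(a,c)=0$ so $|c\cap a|$ is even, plays a role in ruling out pathological configurations during surgery, as does the absence of bigons between $c$ and $a$ coming from the minimality of $|c\cap a|$.
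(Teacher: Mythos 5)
Your base case (when $D_c\cap D_a=\emptyset$) is correct and essentially matches the paper's argument: cutting $S[a]$ along $D_a$ gives interval bundle(s) over closed surfaces, which have no essential disks, so the question reduces to which curves on $S$ become inessential in the capped-off surface. The disk construction for $\partial T$ is also fine. But the gap you flag at the end is genuine, and the hints you offer do not close it. What must be shown is that the minimum of $|D_c\cap D_a|$ over isotopies (with $c$, $a$ in minimal position) is actually zero, i.e.\ that \emph{every} meridian of $S[a]$ can be isotoped off $a$. Merely noting that $|c\cap a|$ is even and that there are no bigons does not accomplish this, and the observation $[c]\in\langle[a]\rangle$ concerns $c$ and $a$, not the surgered curves, so it does not drive the induction. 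The paper's Lemma~\ref{innermost} is precisely the tool for this step and carries more information than you invoke: an arc $\alpha$ of $D_c\cap D_a$ outermost on $D_c$ cuts off a subdisk $E$, and gluing $E$ to \emph{either} half-disk of $D_a\smallsetminus\alpha$ yields a meridian that is not only disjoint from $a$ but also (after the annulus/bigon case analysis of Figure~\ref{fig1}) \emph{not isotopic} to $a$. That stronger conclusion, applied to \emph{both} surgered curves, is the missing ingredient.

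Granting the lemma, the reduction closes as follows. If $a$ is separating, your base case shows $a$ is the only meridian disjoint from $a$, so producing a meridian disjoint from and not isotopic to $a$ is an immediate contradiction, whence $i(a,c)=0$. If $a$ is non-separating, the two surgered curves $m_\pm=\beta\cup\gamma_\pm$ (where $\gamma_+\cup\gamma_-=a$ share endpoints with $\beta$) are both meridians disjoint from and not isotopic to $a$, so by the base case each equals some $\partial T$ and is therefore separating. But $[m_+]-[m_-]=\pm[a]\neq 0$ in $H_1(S;\bz)$, so $m_+$ and $m_-$ cannot both be null-homologous --- a contradiction, so again $i(a,c)=0$. (The paper itself spells out the ``converts to a meridian disjoint from $a$'' step only in the separating case; the non-separating case needs the extra homological remark above, or something equivalent, and you should not expect it to be fully automatic.)
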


In the rest of the paper, we will call a small compression body $C=S[a]$ \emph {separating} or \emph{non-separating} depending on the type of the compressed curve $a \subset S$.

An $S$-compression body $C $ is called \emph {minimal} if it does not contain any nontrivial sub-compression bodies. Any minimal compression body must be small, but if $a$ is non-separating and $g(S)\geq 2$ then $S[a]$ is not minimal, since compressing any separating meridian gives a nontrivial sub-compression body. On the other hand, all other small compression bodies have a single meridian, so are certainly minimal. In summary:

\begin {Cor}\label {cor:minimal}
An $S$-compression body is minimal if and only if it is a solid torus or a small compression body obtained by compressing a separating curve.
\end{Cor}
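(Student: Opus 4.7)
The plan is to derive the corollary directly from Corollary \ref{disccont} (which recovers a compression body from its disk set) together with Proposition \ref{smalldisc} (which computes disk sets of small compression bodies). The result is essentially just an organized combination of these two facts, and the sketch already included in the text just above the corollary tells us exactly what to do.

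For the ``if'' direction, suppose $C$ is either a solid torus or a small compression body $S[a]$ obtained by compressing a separating curve. In both cases Proposition \ref{smalldisc} gives $\cd(C) = \{a\}$, a singleton. Any nontrivial sub-compression body $D \subseteq C$ satisfies $\emptyset \ne \cd(D) \subseteq \cd(C) = \{a\}$, forcing $\cd(D) = \cd(C)$ and hence $D = C$ by Corollary \ref{disccont}. So $C$ has no proper nontrivial sub-compression body.

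For the converse, suppose $C$ is a nontrivial minimal compression body. Since $C$ is nontrivial, the boundary connected sum decomposition from the discussion around Corollary \ref{onlyone} requires at least one $1$-handle, so $C$ has a meridian $a$. Lemma \ref{compressdiscs} gives $S[a] \subseteq C$, and since $S[a]$ is itself nontrivial, minimality of $C$ forces $S[a] = C$; that is, $C$ is small. It remains to rule out $a$ being non-separating when $g(S) \ge 2$. In that situation I pick a curve $b$ with $i(a,b) = 1$ (available because $a$ is non-separating: cut along $a$ and join the two boundary circles by an arc), so that Proposition \ref{smalldisc} produces the separating meridian $B(a,b) \in \cd(C)$. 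Lemma \ref{compressdiscs} gives $S[B(a,b)] \subseteq C$, and $\cd(S[B(a,b)]) = \{B(a,b)\} \ne \cd(C)$ (the latter contains $a$), so Corollary \ref{disccont} makes $S[B(a,b)]$ a proper nontrivial sub-compression body, contradicting minimality. When $g(S) = 1$ the only nontrivial small compression body is a solid torus, which is the first listed case.

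There is no serious obstacle: each step is a direct invocation of something already proved. The only auxiliary input not quoted verbatim from the preceding material is the standard fact that on a closed orientable surface of genus at least two every non-separating simple closed curve is met geometrically once by some other simple closed curve, which follows from the connectedness of the complement.
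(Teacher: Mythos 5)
Your proof is correct and follows essentially the same line of reasoning the paper gives informally in the paragraph preceding the corollary: every nontrivial minimal $C$ is small since it contains $S[a]$ for any meridian $a$, and Proposition~\ref{smalldisc} separates the small compression bodies with singleton disk sets (solid tori and separating $S[a]$, hence minimal) from the non-separating ones, which contain a separating meridian $B(a,b)$ and thus a proper nontrivial sub-compression body. You have simply made explicit the appeals to Lemma~\ref{compressdiscs} and Corollary~\ref{disccont} that the paper leaves implicit.
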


Before proving Proposition \ref{smalldisc}, we need the following lemma. Although we are only concerned with compression bodies, we might as well state it more generally.

\begin {Lem}\label {innermost}
Suppose that $S$ is a boundary component of a compact $3$-manifold $C$, and $a,b$ are meridians on $S$ with $i(a,b)>0$. Then the intersections with $a$ divide $b$ into a collection of arcs, one of which, say $b'$, has the following properties:
\begin {enumerate}
\item both intersections of $b'$ with $a$ happen on the same side of $a$,
\item the union of $b'$ with either of the two arcs of $a$ with the same endpoints is a meridian, which is  disjoint from (after isotopy) but not isotopic to $a$.
\end {enumerate}
\end {Lem}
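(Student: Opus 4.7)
The plan is a standard outermost-arc argument using compressing disks. First, I choose embedded disks $D_a, D_b \subset C$ with $\partial D_a = a$, $\partial D_b = b$, in general position so that $D_a \cap D_b$ is a properly embedded $1$-manifold with boundary $a\cap b$. A standard innermost-circle surgery on $D_b$ (replace an innermost subdisk of $D_b$ bounded by a circle of $D_a \cap D_b$ by the corresponding subdisk of $D_a$, and perturb) lets me assume $D_a \cap D_b$ consists only of arcs; this surgery is purely combinatorial and needs no irreducibility hypothesis on $C$. I also isotope $a$ and $b$ in $S$ to minimal position, so $|a\cap b| = i(a,b)$.

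Next, pick an arc $\alpha \subset D_a \cap D_b$ that is outermost in $D_b$: it cuts off a subdisk $D' \subset D_b$ with $\mathrm{int}(D') \cap D_a = \emptyset$ and $\partial D' = \alpha \cup b'$. Outermost-ness implies that no other arc of $D_a \cap D_b$ can have an endpoint in the interior of $b'$ (else it would extend into $D'$), so $b'$ is exactly one arc of the decomposition of $b$ by $a\cap b$. Simultaneously, $\alpha$ separates $D_a$ into two subdisks $D_1, D_2$ with boundaries $\alpha \cup a_1$ and $\alpha \cup a_2$, where $a_1, a_2$ are the two arcs of $a$ with endpoints $\partial\alpha$. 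Gluing $D'$ to $D_i$ along $\alpha$ and perturbing slightly off $D_a$ to one side gives an embedded disk in $C$ with boundary $b' \cup a_i$, which shows that $b'\cup a_i$ is a meridian of $C$ and gives the ``meridian'' half of (2).

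The main obstacle is (1), that both endpoints of $b'$ lie on the same side of $a$ in $S$. I deduce this from two-sidedness. Pick a continuous unit normal $\nu$ to $D_a$ in the orientable manifold $C$. The set $D'\setminus \alpha$ is connected and, in a bicollar of $D_a$, lies entirely on one side of $D_a$ near $\alpha$, so $D'$ picks out a consistent sign of $\nu$ all along $\alpha$. At each endpoint $p\in a$ of $\alpha$, transversality of $D_a$ with $S$ along $a$ forces the projection of $\nu(p)$ to $T_pS$ to be a nonzero vector perpendicular to $T_pa$; this specifies one of the two sides of $a$ in $S$ at $p$. Since this assignment is continuous in $p\in a$ and $a$ is connected, the chosen side is the same at the two endpoints of $\alpha$. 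Hence $b'$ approaches $a$ from the same side at both of its endpoints, and $b' \cup a_i$ can then be isotoped off $a$ by pushing $a_i$ slightly into that common side.

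It remains to show $b'\cup a_i$ is not isotopic to $a$. If it were, then in $S$ cut along $a$, the push-off of $b'\cup a_i$ would cobound an annulus with the boundary copy of $a$ lying on its side; comparing the two curves along this annulus shows that $b'$ is isotopic rel endpoints in $S$ to $a_{3-i}$. Then $b'$ and $a_{3-i}$ cobound a bigon in $S$, and a standard bigon move isotopes $b$ to strictly decrease $|a\cap b|$, contradicting $|a\cap b|=i(a,b)$.
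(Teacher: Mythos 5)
Your first three paragraphs essentially track the paper's argument and are fine: removing circle components from $D_a\cap D_b$ (the paper merely minimizes the number of intersection components, but the innermost-circle swap is the standard justification and, as you say, needs no irreducibility), choosing an arc outermost in $D_b$ to get the subdisk $D'$ and the arc $b'$, gluing $D'$ to either piece of $D_a$ to produce the new meridian, and the two-sidedness argument for property (1).

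The gap is in the final paragraph. You write that in $S$ cut along $a$, the push-off of $b'\cup a_i$ ``would cobound an annulus with the boundary copy of $a$ lying on its side,'' and you derive from this that $b'$ is isotopic rel endpoints to $a_{3-i}$. But the annulus need not lie on the same side of $a$ as $b'$. When $a$ is separating this is forced (the push-off lives in one complementary component, so the annulus cannot reach the other copy of $a$). When $a$ is nonseparating, however, $S$ cut along $a$ is connected with two boundary circles $a^+$ and $a^-$; the push-off sits near $a^+$ along $a_i$ but the annulus realizing the isotopy can perfectly well run to $a^-$. In that configuration $b'$ is \emph{not} isotopic rel endpoints to $a_{3-i}$, and the bigon reduction does not apply, so your contradiction evaporates. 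This is exactly the case the paper isolates as Case~2 of Figure~1, where it rules out the configuration by a separate argument: continuing $b$ past the endpoints of $b'$ without creating a bigon forces $b$ to spiral indefinitely through a thin strip and never close up. Some argument of this kind is needed for your proof to be complete in the nonseparating case.
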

\begin{proof}
Pick two transverse disks $D_a$ and $D_b$ with boundaries $a$ and $b$, and assume that number of components of the intersection $D_a \cap D_b$ is minimal.  

Let $\gamma $ be an arc of $D_a \cap D_b$ that is \emph {innermost} in $D_b$, meaning that one of the two components, say $X$, of $D_b \smallsetminus \gamma $ has no intersections with $D_a$. The boundary arc $b'=X\cap \partial D_b$ is disjoint from $a$ except at its endpoints. These two intersections happen on the same side of $a$, since the side of the disk $D_a$ that $X$ is on cannot flip while traversing $\gamma$. 

Gluing $X$ to either of the two components of $D_a \smallsetminus \gamma$ gives a disk $D \subset C$ whose boundary is the union of $b'$ with an arc $a'$ of $a$, as desired.  Note that $\partial D$ is an essential simple closed curve in $S$, since if it were inessential $b'$ and $a'$ would be homotopic rel endpoints, and then $a$ and $b$ would not be in minimal position. Also, since the intersections of $b'$ with $a$ happen on the same side of $a$, $\partial D$ can be isotoped to be disjoint from $a$.

Hoping for a contradiction, assume that $\partial D$ is isotopic to $a$. By a small isotopy, $\partial D$ can be made disjoint from $a$; more carefully, perform the isotopy by pushing $a'$ slightly away from $a$ while keeping its endpoints on $b'$. After the isotopy, $\partial D$ and $a$ cobound an annulus $A\subset S$. If $A$ and $b'$ approach $a$ from the same side, then $b'$ is homotopic to $a \smallsetminus a'$ rel endpoints (see Case 1, Figure \ref{fig1}), so as before $a,b$ cannot be in minimal position.

\begin {figure}
\centering
\includegraphics{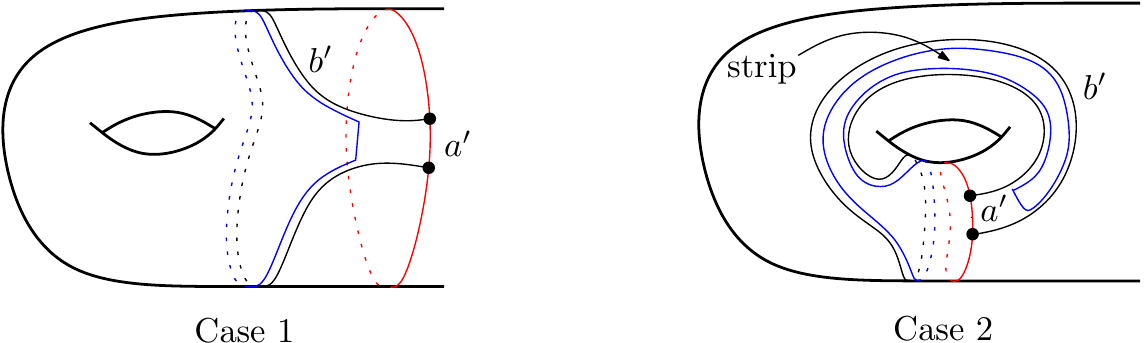}
\caption {The curve $a$ is drawn in red, and after an isotopy to make it disjoint from $a$, the curve $\partial D$ is drawn in blue. In both cases, the annulus $A$ bounded by $a$ and $\partial D$ starts from the left of $a$.}
\label {fig1}
\end {figure}

If $A$ and $b'$ approach $a$ from opposite sides, then $a$ is nonseparating and up to the action of the mapping class group of $S$, the picture is exactly as in Case 2, Figure \ref{fig1}. However, it is impossible to extend the $b'$ in this picture to a closed curve that is in minimal position with respect to $a$. For continuing from the endpoints of $b'$, since it cannot turn immediately back to intersect $a$ again, the curve $b$ would be forced to wind infinitely many times through the thin `strip' indicated in the picture, and could never close up.
\end{proof}

We are now ready to characterize the disk sets of small compression bodies.

\begin {proof}[Proof of Proposition \ref {smalldisc}]
Suppose first that $a\subset S $ is a separating curve. If $D$ is disk in $S[a]$ with boundary $a $, and  $S_i$ is a component of $S\smallsetminus a$, then the union $D\cup S_i$ is a closed surface isotopic in $C$ to an interior boundary component of $S[a]$. As remarked at the beginning of \S \ref{csec}, this means that $D\cup S_i$ is incompressible in $S[a]$. So, the only essential simple closed curves on $S_i$ that are compressible in $S[a]$ are isotopic to the boundary, $a$.  In other words, there are no other meridians of $S[a]$ that are disjoint from $a$.  A priori, there could be meridians that intersect $a$, but Lemma \ref{innermost} converts these to meridians disjoint from $a$, so in fact $a$ is the only meridian.

Now suppose that $a \subset S $ is non-separating. Form a closed surface $S'$ by attaching to $ S \smallsetminus a$ two copies of the disk $D$. As before, $S'$ is incompressible in $S[a]$. So, if $\gamma $ is a meridian in $ S \smallsetminus a$, then $\gamma $ bounds a disk in $S'$. The intersection of this disk with $ S \smallsetminus a$ is a twice punctured disk with $\gamma$ as a boundary component, and re-identifying the two copies of $a$ gives a punctured torus $T\subset S$ bounded by $\gamma$ that contains $a$.
\end {proof}

\subsection{Height of a compression body}
\label {sec:height}

When $C $ is an $S$-compression body, a \emph{sequence of minimal compressions} for $C$ is a chain
\begin {equation}\label {mincomp}S \times [0,1]=C_0 \subset C_1 \subset \cdots \subset C_k=C\end {equation}
of compression bodies in which each $C_{i}$ is created from $C_{i-1}$ by gluing a minimal $F_i$-compression body to some component $F_i \subset\partial_- C_{i-1}$. Recall that a compression body is \emph {minimal} if it does not contain any nontrivial sub-compression bodies --- in Corollary~\ref{cor:minimal} we saw that these are exactly the solid tori and separating small compression bodies.

Sequences of minimal compressions are exactly chains $(C_i)$ as in \eqref{mincomp} that are maximal, in the sense that they are not properly contained in a larger chain. 

As an example, let $C=S[a]$, where $a$ is nonseparating and $g(S)\geq 2$.  By Proposition~\ref{smalldisc}, any separating meridian $b$ for $S[a]$ bounds a punctured torus containing $a$, so $a$ is isotopic to a curve $a'$ on a torus $T\subset \partial_- S[b]$. The compression body $S[a]$ is obtained by attaching a solid torus to $S[b]$ along $T$ so that the meridian is identified with $a'$. So here,
$$S \times [0,1] \subset S[b] \subset S[a]$$
is a sequence of minimal compressions for any separating meridian $b$ in $S[a]$.

More generally, we have the following:

\begin {Lem}\label {lem:curvetosmc}
Suppose that $C=S[a_1,\ldots,a_k]$, and for each $i$ let $S_i$ be the component of $S \setminus a_1 \cup \cdots \cup a_{i-1}$ containing $a_i$. If 
\begin {enumerate}
\item[(*)] for each $i$, either we have $g(S_i)=1$, or we have $g(S_i)\geq 2$ and $a_i$ separates $S_i$,
\end {enumerate}
then $C_i=S[a_1,\ldots,a_i]$ defines a sequence of minimal compressions for $C $. Conversely, any sequence of minimal compressions for $C$ can be written as $C_i=S[a_1,\ldots,a_i]$ for some collection $a_1,\ldots,a_k$ satisfying $(*)$.
\end{Lem}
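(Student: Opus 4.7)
The plan is to prove both directions by combining Corollary \ref{sum} (which realizes $S[a_1,\ldots,a_i]$ as an iterated exterior-to-interior gluing) with Corollary \ref{cor:minimal} (which classifies minimal compression bodies as solid tori or small compression bodies on a separating curve).

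The common technical input is the following capping observation. For any compressing system $a_1,\ldots,a_{i-1}$ for $C_{i-1}$, the interior boundary component $F_i\subset\partial_-C_{i-1}$ corresponding to $S_i$ is obtained from $S_i$ by capping off each boundary circle with a disk. From this I get: $g(F_i)=g(S_i)$; any essential simple closed curve on $F_i$ can be isotoped into the interior of $S_i\subset F_i$; and an essential curve $a_i\subset S_i$ separates $S_i$ if and only if the corresponding curve $a_i'\subset F_i$ separates $F_i$. The only nonobvious point is the implication $a_i'$ separates $F_i \Rightarrow a_i$ separates $S_i$, which I prove contrapositively: a non-separating curve on $S_i$ has a transverse dual curve lying entirely in $S_i$, and this dual curve witnesses non-separation of $a_i'$ in $F_i$ as well.

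For the forward direction, assume $(\ast)$ and proceed by induction. Given $C_{i-1}=S[a_1,\ldots,a_{i-1}]$, push $a_i$ onto the corresponding curve $a_i'\subset F_i$ and observe that the construction of $S[a_1,\ldots,a_i]$ by attaching handles can be factored as ``first form $C_{i-1}$, then compress $a_i'$ on $F_i$''; concretely, $S[a_1,\ldots,a_i] \cong C_{i-1}\sqcup_{F_i}F_i[a_i']$. The right-hand side is an $S$-compression body by Corollary \ref{sum}. By the capping observation, the two cases of $(\ast)$ on $(g(S_i),a_i)$ correspond exactly to the two cases of Corollary \ref{cor:minimal} applied to $F_i[a_i']$, so $F_i[a_i']$ is minimal. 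Hence $(C_i)$ is a sequence of minimal compressions.

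For the converse, given a sequence of minimal compressions $C_0\subset\cdots\subset C_k$, build the $a_i$ inductively. Assuming $C_{i-1}=S[a_1,\ldots,a_{i-1}]$, the second part of Corollary \ref{sum} writes $C_i\cong C_{i-1}\sqcup_{F_i}M_i$ for some interior boundary component $F_i$ and minimal $F_i$-compression body $M_i$. By Corollary \ref{cor:minimal}, $M_i$ is either a solid torus or $F_i[b]$ for $b$ separating; in either case, isotope the unique meridian of $M_i$ into $S_i$ and call the result $a_i$. The factorization from the forward direction then identifies $C_i$ with $S[a_1,\ldots,a_i]$, and $(\ast)$ holds by the capping observation applied in reverse. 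The main obstacle is the capping observation itself and the resulting identification $S[a_1,\ldots,a_i]\cong C_{i-1}\sqcup_{F_i}F_i[a_i']$; the rest is straightforward bookkeeping.
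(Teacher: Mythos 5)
Your proposal is correct and takes essentially the same approach as the paper: both directions hinge on the identification $C_i \cong C_{i-1}\sqcup_{F_i}F_i[a_i']$ together with the observation that $g(F_i)=g(S_i)$ and that $a_i$ separates $S_i$ if and only if $a_i'$ separates $F_i$ (so condition $(*)$ is exactly minimality of $F_i[a_i']$ via Corollary \ref{cor:minimal}). The only cosmetic difference is that for the converse the paper phrases the inductive step as ``iteratively extend compressing systems using Lemma \ref{compressdiscs}'' whereas you invoke Corollary \ref{sum}; since the gluing structure of a minimal compression is already built into the definition, these are the same step in different words.
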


Whenever $C=S[a_1,\ldots,a_k]$, the $(a_i)$ can be altered to satisfy $(*)$. For if $g(S_i)=0$, then $a_i$ is already a meridian in $S[a_1,\ldots,a_{i-1}]$, so its inclusion is redundant and it can be removed. If $g(S_i)\geq 2$ and $a_i$ is non-separating in $S_i$, insert a new curve $b_i\subset S_i$ that bounds a punctured torus containing $a_i$ between $a_{i-1},a_i$ in the sequence.

\begin {proof}
Suppose $C=S[a_1,\ldots,a_k]$ and $(a_i)$ satisfies $(*)$. Then for each $i $,
$$C_{i}=C_{i-1} \sqcup_{F_i} F_i[a_i'],$$ where $F_i$ is the component of $\partial_- C_{i-1}$ homotopic to the surface obtained by attaching disks to $S_i$, and $a_i' \subset F_i$ is the unique curve homotopic to $a_i$. Note that $g(S_i)=g(F_i)$, and $a_i$ separates $S_i$ if and only if $a_i'$ separates $F_i$. Then $(*)$ says that $F_i[a_i']$ is minimal.

Conversely, if $(C_i)$ is a sequence of minimal compressions, we can use Lemma \ref{compressdiscs} to iteratively extend compressing systems from $C_{i-1}$ to $C_i$. The result is a compressing system $a_1,\ldots,a_k$ for $C $ that satisfies $(*)$.
\end {proof}

The following is the main result of the section.

\begin{Prop}\label{prop:height}
If $C$ is a compression body with $\partial_-C = F_1 \sqcup\cdots \sqcup F_n$, the length $k$ of any sequence of minimal compression $S \times [0,1] =C_0 \subset C_1\subset \cdots \subset C_k=C$ is 
\begin{equation}
\label{eq:height}
\mathfrak{h}(C) := \left(2\cdot g\left(S\right) -1\right) - \sum_{i=1}^n\left(2\cdot g\left(F_i\right)-1\right).
\end{equation}
\end{Prop}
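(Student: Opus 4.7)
My plan is to prove the formula by induction on $i$, establishing that $\mathfrak{h}(C_i) = i$ for each $0 \leq i \leq k$; the proposition then follows by setting $i = k$. The base case is immediate: since $C_0 = S \times [0,1]$ has a single interior boundary component $S \times \{0\}$ of genus $g(S)$, the defining formula gives $\mathfrak{h}(C_0) = (2g(S)-1) - (2g(S)-1) = 0$.

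For the inductive step, I would use Corollary \ref{sum} together with the definition of a sequence of minimal compressions to write $C_i = C_{i-1} \sqcup_F D$, where $F \subset \partial_- C_{i-1}$ and $D$ is a minimal $F$-compression body. Passing from $C_{i-1}$ to $C_i$ deletes $F$ from the interior boundary and inserts the components of $\partial_- D$ in its place, so the sum $\sum_j(2g(F_j) - 1)$ changes by
\begin{equation*}
\Delta \;=\; \sum_{F' \subset \partial_- D}\bigl(2g(F')-1\bigr) \;-\; \bigl(2g(F)-1\bigr).
\end{equation*}
By Corollary \ref{cor:minimal}, $D$ is either a solid torus (in which case $g(F)=1$ and $\partial_- D = \emptyset$) or a small compression body $F[a]$ with $a$ separating $F$ into pieces of genera $g_1, g_2 \geq 1$ satisfying $g_1 + g_2 = g(F)$ (in which case $\partial_- D$ is the disjoint union of closed surfaces of genera $g_1$ and $g_2$). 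A short genus calculation in each case yields $\Delta = -1$, hence $\mathfrak{h}(C_i) = \mathfrak{h}(C_{i-1}) + 1$, and the induction concludes.

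The only point that genuinely requires care is identifying $\partial_- D$ in the separating case; this follows either from the boundary connected sum description of compression bodies recorded just before Corollary \ref{onlyone}, or directly from the $2$-handle-and-ball-filling construction of $F[a]$ together with the fact that a separating curve splits a closed surface of genus $g(F)$ into pieces whose genera add to $g(F)$. I expect no serious obstacle: once the classification of minimal compression bodies from Corollary \ref{cor:minimal} is in hand, Proposition \ref{prop:height} reduces to a single genus-arithmetic bookkeeping step applied inductively.
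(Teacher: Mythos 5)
Your proposal is correct and follows essentially the same route as the paper: both proofs reduce to the observation that gluing a minimal $F$-compression body (a solid torus or a separating small compression body) to an interior boundary component $F$ increments the quantity $\mathfrak{h}$ by exactly one. The paper packages this as the general additivity identity $\mathfrak{h}(C \sqcup_F D) = \mathfrak{h}(C) + \mathfrak{h}(D)$ together with the fact that minimal compression bodies have height one, whereas you compute the increment $\Delta = -1$ case-by-case; these are the same genus computation expressed slightly differently.
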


We call $\mathfrak{h}(C) $ the \emph {height} of $C$.  A genus $g$ handlebody has height $2g-1$, so a solid torus has height $1$. A separating small compression body also has height $1$, as the genera of the two interior boundary components sum to the genus of the exterior boundary.

\begin {proof}
If $C$ is a compression body and $F$ is a component of $\partial_- C$, then $\mathfrak{h}$ adds when an $F$-compression body $D$ is glued to $C$:
$$  \mathfrak{h}(C \sqcup_F D)=  \mathfrak{h}(C)+  \mathfrak{h}(D),$$
since the only boundary component from $C$ or $D$ that is not referenced in $  \mathfrak{h}(C \sqcup_F D)$ is $F$, but $2 \cdot g\left(F\right)-1$ appears with opposite signs in $  \mathfrak{h}(C)$ and $  \mathfrak{h}(D)$. So, gluing on a solid torus or a separating small compression body increments height.
\end {proof}

As a consequence of Proposition \ref{prop:height}, height is positive and increases under inclusion: $$C\, \subset \, D\ \  \Longrightarrow \ \ \mathfrak{h}(C)<\mathfrak{h}(D).$$ Hence, the length of any chain $(C_i)$ of sub-compression bodies of $C$ is at most $\mathfrak{h}(C) $, and any chain is contained in a maximal chain, i.e.\ a sequence of minimal compressions.

\begin {Cor}[`Short' compression bodies]
\label{cor:height}

\begin {enumerate}
\item Height one compression bodies are solid tori and small compression bodies $S[a]$, where $a$ is separating.
\item Height two compression bodies are small compression bodies $S[a]$, where $a$ is non-separating and $g(S)\geq 2$, and compression bodies of the form $S[a_1,a_2]$, where $a_1,a_2$ are disjoint, separating curves on $S$.
\end {enumerate}
\end{Cor}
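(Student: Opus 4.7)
The plan is to apply Lemma~\ref{lem:curvetosmc}, which identifies height-$k$ compression bodies with those expressible as $S[a_1,\ldots,a_k]$ for a compressing system satisfying $(*)$, and then unpack the cases $k=1, 2$ directly.

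For (1), $(*)$ at $i=1$ gives two options: either $g(S)=1$, in which case $S[a_1]$ is a solid torus, or $g(S)\geq 2$ with $a_1$ separating $S$, giving a separating small compression body. This is immediate.

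For (2), the first step is to rule out $g(S)=1$: the only way to satisfy the formula in Proposition~\ref{prop:height} with $g(S)=1$ and height $2$ would be for some $F_i$ to be a sphere, which compression bodies do not have. So $g(S)\geq 2$, and $(*)$ at $i=1$ forces $a_1$ to separate $S$. Let $S_2$ denote the component of $S\smallsetminus a_1$ containing $a_2$; then $(*)$ at $i=2$ splits into two cases.

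Case~(a), $g(S_2)=1$: here $S_2$ is a once-punctured torus, so $a_2$ is non-separating in $S_2$ and hence in $S$. I expect this to be the main obstacle, since here I must identify $S[a_1,a_2]$ with the small compression body $S[a_2]$ rather than leave it as an honest two-curve compression. The plan is: $a_1=\partial S_2$ bounds a punctured torus containing $a_2$, so by Proposition~\ref{smalldisc}, $a_1\in\cd(S[a_2])$; Lemma~\ref{compressdiscs} then gives $S[a_1,a_2]\subset S[a_2]$, while the reverse inclusion follows from Lemma~\ref{compressdiscs} applied to the single meridian $a_2$ of $S[a_1,a_2]$; Corollary~\ref{disccont} concludes $S[a_1,a_2]=S[a_2]$. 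Case~(b), $g(S_2)\geq 2$ with $a_2$ separating $S_2$: since $a_2$ is disjoint from $a_1$ and separates $S_2$, it separates $S$ as well, so $C=S[a_1,a_2]$ with $a_1,a_2$ disjoint separating curves, as claimed.

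For the converse direction, I would close by verifying each listed type realizes the claimed height via the formula in Proposition~\ref{prop:height}: a solid torus and a separating small compression body each have height $1$; a non-separating small $S[a]$ has a single interior boundary of genus $g(S)-1$ giving height $2$; and an $S[a_1,a_2]$ with $a_1,a_2$ disjoint separating has three interior boundary components whose genera sum to $g(S)$, again yielding height $2$.
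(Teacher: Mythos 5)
Your proposal is correct and follows essentially the same route as the paper: Lemma~\ref{lem:curvetosmc} reduces the classification of height-$2$ compression bodies to the case analysis on condition $(*)$, and the converse is a direct computation with the height formula. Your only additions are minor: in part (1) you invoke Lemma~\ref{lem:curvetosmc} directly rather than routing through Corollary~\ref{cor:minimal}, and in case (a) you spell out the identification $S[a_1,a_2]=S[a_2]$ via disk sets, which the paper leaves implicit.
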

\begin {proof}
By Proposition \ref{prop:height}, height one compression bodies are minimal compression bodies, so (1) follows from Corollary \ref{cor:minimal}.

A non-separating small compression body $S[a]$, where $g(S)\geq 2$, has height $2$, since its interior boundary is connected with genus one less than that of the exterior boundary.

A pair of disjoint separating curves $a,b\subset S$ separates $S$ into three subsurfaces $S_1,S_2,S_3$, where $\sum_i g(S_i)=g(S)$. All of these have positive genus, so $S[a,b]$ has three interior boundary components, with these same genera. So, $\mathfrak{h}(S[a,b])=2$.

Finally, if a compression body $C$ has height two, then by Corollary \ref{lem:curvetosmc} there must be a pair of disjoint curves $a_1,a_2$ satisfying $(*)$ with $S[a_1,a_2]=C$. It follows from $(*)$ that $g(S)\geq 2$, and that $a_1$ is separating. If $a_2$ is non-separating, then $(*)$ implies that the component of $S\setminus a_1$ containing $a_2$ is a punctured torus, in which case $S=S[a_2]$.
\end {proof}

\subsection{Separating and non-separating compression bodies}

In this section, it is shown that for every compression body there exists a compressing system consisting of entirely non-separating curves or entirely separating curves.  Furthermore, this dichotomy is determined by the presence or lack, respectively, of a non-separating meridian.

\begin {figure}
\centering
\includegraphics{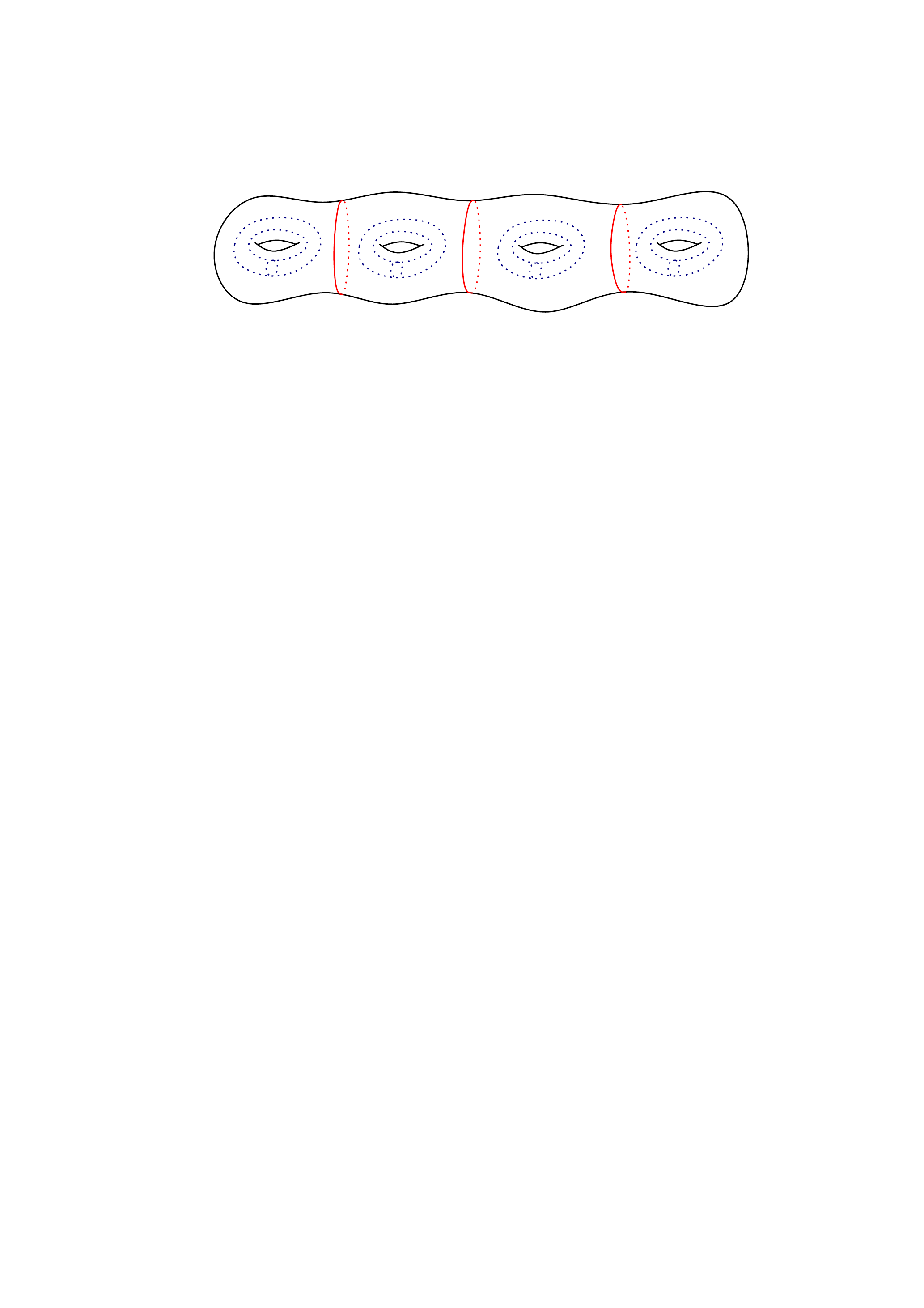}
\caption {An $S$-compression body that has $g(S)$ interior boundary components, all of which are tori. A compressing system is drawn in red, and the interior boundary is blue. All such compression bodies are isomorphic, although not all compressing systems look like the above.}
\label {maxtori}
\end {figure}

\begin {Prop}\label {prop:sep}
For an $S$-compression body $C$, the following seven conditions are equivalent:
\begin {enumerate}
\item $C =S[a_1,\ldots,a_k]$, where each $a_i$ is separating,
\item $H_1(S)\longrightarrow H_1 (C)$ is injective (and hence an isomorphism),
\item every meridian of $C$ is separating,
\item solid tori are never used in any sequence of minimal compressions for $C$,
\item $C$ has a sequence of minimal compressions in which solid tori are never used,
\item the number of interior boundary components of $C$ is $\mathfrak{h}(C)+1$,
\item the genera of the interior boundary components of $C$ sum to $g(S)$,
\item $C$ is contained in a compression body that has $g(S)$ interior boundary components, all of which are tori, as pictured in Figure \ref{maxtori}.
\end {enumerate}

\end{Prop}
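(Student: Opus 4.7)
The plan is to prove these equivalences by a web of implications anchored at the arithmetic condition (7), using the height formula of Proposition \ref{prop:height} and the correspondence between sequences of minimal compressions and compressing systems from Lemma \ref{lem:curvetosmc}.

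First, (6) $\Leftrightarrow$ (7) is immediate: plugging $\sum g(F_i)=g(S)$ into $\mathfrak{h}(C)=(2g(S)-1)-\sum(2g(F_i)-1)$ gives $\mathfrak{h}(C)=n-1$, and the arithmetic is reversible. Next I would show (1) $\Rightarrow$ (7) by induction on the number of separating compressions: compressing a separating curve on a connected subsurface of genus $g'$ splits it into two pieces whose genera sum to $g'$, so after filling with disks the total interior genus is unchanged. For the return trip, I would exploit the following count along a sequence of minimal compressions: attaching a solid torus decreases the number of interior boundary components by one, while attaching a separating small compression body increases it by one. Starting from $S \times [0,1]$ with one interior component, after $k$ minimal compressions the count is $1+k-2t$ where $t$ is the number of solid tori used. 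Condition (6) forces $t=0$ on any sequence, which is (4); this implies the trivial (4) $\Rightarrow$ (5), and (5) $\Rightarrow$ (1) follows from Lemma \ref{lem:curvetosmc} since in a solid-torus-free sequence each curve $a_i$ of the associated compressing system lies in a genus-$\geq 2$ subsurface and separates it.

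For (1) $\Leftrightarrow$ (3), the forward direction of (3) $\Rightarrow$ (1) is immediate from Lemma \ref{compressdiscs}, which produces a maximal compressing system realizing $C$ consisting of meridians. For (1) $\Rightarrow$ (3), I use (2): a non-separating meridian $\gamma$ would have $[\gamma]\neq 0$ in $H_1(S)$ but $[\gamma]=0$ in $H_1(C)$, contradicting injectivity. So I need (1) $\Leftrightarrow$ (7) $\Leftrightarrow$ (2). Using the boundary connected sum decomposition from the paragraph preceding Figure \ref{alien}, $C$ is a handlebody of some genus $h$ joined by $1$-handles to the interval bundles $F_i \times [0,1]$, and the exterior boundary has genus $g(S)=h+\sum g(F_i)$; hence $\sum g(F_i) \leq g(S)$ always, and $H_1(C)$ is free abelian of rank $h+\sum 2g(F_i)=g(S)+\sum g(F_i)$. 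Injectivity of $H_1(S)\to H_1(C)$ forces this rank to be at least $2g(S)$, giving $\sum g(F_i)=g(S)$. Conversely, when $\sum g(F_i)=g(S)$, the handlebody part is a ball, $C$ deformation retracts to a wedge of the $F_i$'s, and $S$ is a connect sum of copies of the $F_i$, so the induced map on $H_1$ is an isomorphism.

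Finally, for (7) $\Leftrightarrow$ (8): assuming (7), I extend a compressing system satisfying (*) for $C$ by inserting, on each $F_i$ of genus $g_i \geq 2$, a collection of $g_i - 1$ disjoint separating curves cutting $F_i$ into $g_i$ one-holed tori; since these lie on the interior boundary of $C$, they are meridians in the enlarged compression body, whose interior boundary is $g(S)$ tori. Conversely, if $C \subset E$ with $E$ as in (8), Corollary \ref{sum} writes $E$ as $C$ glued to $F_i$-compression bodies $D_i$; the total interior genus of $E$ is $g(S)$ and is at most $\sum g(F_i) \leq g(S)$, forcing equality and giving (7). The main obstacle I anticipate is keeping careful track of how the count of interior boundary components and their genera transform under each type of minimal compression, and verifying the $H_1$ rank computation cleanly using the boundary connected sum picture; once these are in place, the cycle closes readily.
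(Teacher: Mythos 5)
Your plan is sound and arrives at the same equivalences, but it is organized quite differently from the paper's proof. The paper runs a single linear cycle $(1)\Rightarrow(2)\Rightarrow(3)\Rightarrow(4)\Rightarrow(5)\Rightarrow(6)\Rightarrow(7)\Rightarrow(8)\Rightarrow(2)$, plus the trivial $(3)\Rightarrow(1)$; each arrow is a short topological observation about how a single minimal compression changes the picture, e.g.\ $(1)\Rightarrow(2)$ is the remark that separating curves lie in $[\pi_1 S,\pi_1 S]$, and $(8)\Rightarrow(2)$ comes from noting that a sub-compression body of something with only separating meridians again has only separating meridians. You instead anchor the web at the arithmetic condition $(7)$ and bring in two devices the paper does not use here: a rank count in $H_1$ via the boundary-connected-sum decomposition of Corollary~\ref{onlyone}, giving $(2)\Leftrightarrow(7)$ directly, and a signed count of interior boundary components along a sequence of minimal compressions ($+1$ per separating compression, $-1$ per solid torus) to extract $t=0$ from $(6)$. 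The paper uses the same count only in the special case $t=0$ to pass from $(5)$ to $(6)$. Both routes work; the paper's is a bit more elementary, never invoking ranks of abelianizations, while your homological argument makes the ``why'' of $(2)\Leftrightarrow(7)$ unusually transparent and is the natural thing to do once one has the decomposition of Corollary~\ref{onlyone} in hand.

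One small gap you should patch: in the step $(5)\Rightarrow(1)$ you invoke Lemma~\ref{lem:curvetosmc} to produce a compressing system $(a_i)$ satisfying $(*)$, under which each $a_i$ separates the subsurface $S_i$. Condition $(1)$, however, asks that each $a_i$ be separating \emph{in $S$}, which is not literally the same statement. The fix is a one-liner — a simple closed curve that bounds a subsurface of $S_i\subset S$ is null-homologous in $S$, hence separating in $S$ — but without it the chain back to $(1)$ is not closed. Everything else in the plan checks out once that remark is inserted.
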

\begin {proof}
For $(1)\implies (2)$  the kernel of $\pi_1 S \longrightarrow\pi_1 C$ is normally generated by the curves $a_1,\ldots,a_k$, which all lie in the commutator subgroup $[\pi_1 S ,\pi_1 S ]$. So, the entire kernel lies in $[\pi_1 S ,\pi_1 S ]$, implying that the induced map $H_1(S)\longrightarrow H_1 (C)$ is injective. 

For $(2)\implies (3)$, note that any non-separating curve is nontrivial in $H_1(S)$, so by $(2)$ cannot be trivial in $H_1(C)$.

$(3) \implies (1)$ are trivial.

$(3)\implies (4)$, since gluing a solid torus to an interior boundary component $F \subset\partial_- C$ compresses a non-separating curve on $F$, which is then homotopic to a non-separating meridian on $S $. 

$(4)\implies (5)$ is trivial.

$(5)\implies (6)$, since gluing a separating small compression body onto an interior boundary component $F$ removes $F$, but contributes two new interior boundary components.

$(6)\implies (7)$, by the definition of height.

$(7)\implies (8)$, since to each interior boundary component $F $ of $C $, we can glue an $F$-compression body with $g(F)$ interior boundary components, all of which are tori.

For $(8) \implies (2)$, note that the compression body in Figure \ref{maxtori} has a compressing system (pictured) consisting of only separating curves, so all its meridians are separating by the fact that $(1)\implies (2)$. The same is then true for any sub-compression body.
\end {proof}

The next result is a corollary of Lemma \ref{compressdiscs} and Corollary \ref{onlyone}.

\begin{Cor}
\label{cor:nonsep}
An $S$-compression body $C$ can be written as $S[a_1, \ldots, a_m]$ with each $a_i$ non-separating if and only if $C$ contains a non-separating meridian.
\end{Cor}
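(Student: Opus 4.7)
The forward direction is immediate: if $C = S[a_1, \ldots, a_m]$ with each $a_i$ non-separating, then $a_1$ is itself a non-separating meridian of $C$. For the backward direction, suppose $C$ has a non-separating meridian; write its interior boundary as $F_1 \sqcup \cdots \sqcup F_n$ with $g_i := g(F_i)$, and set $h := g(S) - \sum_i g_i$. Condition (7) of Proposition~\ref{prop:sep} translates the hypothesis into $h \geq 1$. Since Corollary~\ref{onlyone} shows that a compression body is determined by the genera of its boundary components, it suffices to exhibit a compressing system of non-separating simple closed curves on $S$ whose associated compression body has interior genera $g_1, \ldots, g_n$; by Lemma~\ref{compressdiscs} that compression body must then be $C$.

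For $n = 0$, $C$ is a handlebody of genus $g(S)$ and any standard family of $g(S)$ disjoint non-separating meridians works. When $n \geq 1$, my plan is to choose $n$ pairwise disjoint, pairwise homologous, individually non-separating simple closed curves $\gamma_1, \ldots, \gamma_n$ on $S$ whose union separates $S$ into cyclically arranged pieces $R_1, \ldots, R_n$ of genera $g_1, \ldots, g_{n-1}, g_n + h - 1$; these non-negative integers sum to $g(S)-1$, which is exactly the total genus realized by such an $n$-fold bounding family (when $n = 1$ this is just one non-separating curve with $R_1 = S \smallsetminus \gamma_1$). Inside $R_n$, a surface of genus $g_n + h - 1$ with two boundary circles, I would then pick $h - 1$ further disjoint simple closed curves $\delta_1, \ldots, \delta_{h-1}$ whose homology classes in $H_1(R_n)$ are linearly independent. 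Each $\delta_k$ is non-separating in $R_n$, and because $S \smallsetminus R_n$ is connected and meets both boundary circles of $R_n$, the curve $\delta_k$ remains non-separating in $S$.

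Compressing the full collection $\gamma_1, \ldots, \gamma_n, \delta_1, \ldots, \delta_{h-1}$ caps each $R_i$ on both sides into a closed surface of genus $g_i$ (or $g_n + h - 1$ when $i = n$), and the additional $\delta$-compressions then reduce the last component down to genus $g_n$. The resulting compression body $C'$ has interior genera exactly $g_1, \ldots, g_n$, so $C' \cong C$ by Corollary~\ref{onlyone}. The main technical obstacle is producing the $n$-fold bounding family realizing an arbitrary prescribed partition of $g(S) - 1$ into piece-genera; I would obtain it by modeling $S$ as the boundary of a regular neighborhood of a cycle of $n$ surface-blocks of genera $r_i$ joined by $n$ tubes, whose core curves are the $\gamma_j$. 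Once that configuration is in place, the remaining verifications---non-separation of each $\delta_k$ in $S$, and the genus bookkeeping via Euler characteristic---are routine.
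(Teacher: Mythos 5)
Your argument has a genuine gap at the final identification step. You construct, starting from scratch on $S$, a compressing system $\gamma_1,\ldots,\gamma_n,\delta_1,\ldots,\delta_{h-1}$ of non-separating curves whose compression body $C'$ has interior boundary genera $g_1,\ldots,g_n$, and then you appeal to Corollary \ref{onlyone} and Lemma \ref{compressdiscs} to conclude $C'=C$. But Corollary \ref{onlyone} only says $C'$ is \emph{homeomorphic} to $C$ as an abstract compression body, not \emph{isomorphic to $C$ as an $S$-compression body} (i.e.\ via a homeomorphism restricting to the identity on $\partial_+$). These are very different: for instance, when $g(S)=2$ and $a,b$ are two non-isotopic non-separating curves, $S[a]$ and $S[b]$ have identical boundary genera and are abstractly homeomorphic, yet $\cd(S[a]) \neq \cd(S[b])$, so they are distinct vertices of $\cb(S)$. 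Lemma \ref{compressdiscs} doesn't close this gap either, since it requires the $a_i$ to already be known meridians of $C$ --- precisely what your construction fails to guarantee. And the gap matters downstream: the corollary is invoked in Lemma \ref{lem:nonsep-fixed} precisely so that each $S[a_i]$ \emph{is contained in the given} $C$, which forces the $a_i$ to be actual meridians of $C$.

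The fix is to run the construction \emph{inside} $C$ rather than freely on $S$, which is what the paper does: using Corollary \ref{onlyone}'s decomposition of $C$ as a handlebody $H$ (forced to have positive genus by Proposition~\ref{prop:sep}(3)) with interval bundles attached along a single 1-handle each, one chooses a non-separating pants decomposition of the spotted surface $\partial H \setminus \bigcup D_i$; these curves are genuinely meridians of $C$, and each $\partial D_i$ bounds a pair of pants with two of them, so they form a compressing system by Lemma~\ref{compressdiscs}. Your genus bookkeeping and the idea of the cyclic family $\gamma_1,\ldots,\gamma_n$ are sound as far as they go, but without anchoring the curves to the given $C$ they produce only \emph{some} compression body of the right homeomorphism type, not $C$ itself. (A secondary, fixable issue: you need $\delta_1,\ldots,\delta_{h-1}$ to be linearly independent in $H_1$ of the \emph{capped-off} piece $F_n'$, not just in $H_1(R_n)$, since classes that map to a combination of the two boundary circles of $R_n$ die after capping.)
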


\begin{figure}
\centering
\includegraphics{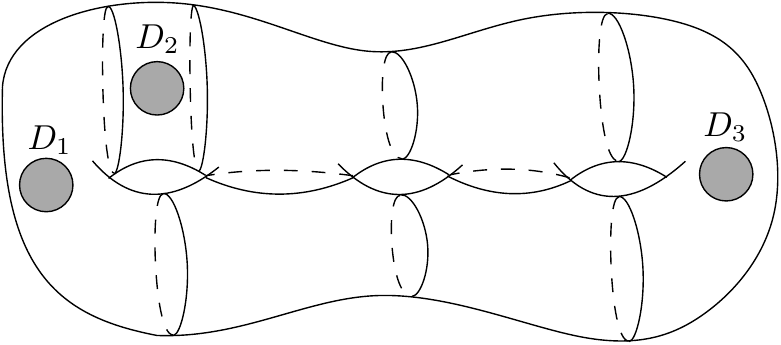}
\caption{A handlebody with spots $D_1, D_2, D_3$ corresponding to the attachment of 1-handles.  The non-separating meridians shown on the surface give a pants decomposition of the punctured surface obtained by deleting the spots from the boundary of the handlebody.}
\label{fig:non-sep-cb}
\end{figure}

\begin{proof}

The forward implication is obvious, so assume that $C$ has a non-separating meridian.
As  described in Corollary \ref{onlyone}, we can construct $C$ by attaching, for each component of $\partial_-C$, an interval bundle with a single 1-handle to a handlebody $H$.  
Since $C$ has a non-separating meridian, $H$ must have positive genus, for otherwise $H$ would admit a compressing system with only separating meridians, violating Proposition \ref {prop:sep} (3).

Each 1-handle intersects $\partial H$ in a disk; let $\{D_1, \ldots, D_n\}$ be the collection of theses disks. Choose any pants decomposition $\{a_1, \ldots, a_m\}$ for the punctured surface $\partial H \smallsetminus \left(\bigcup_i D_i\right)$ in which all the $a_i$ are non-separating (as in Figure \ref{fig:non-sep-cb}); we claim that $C = S[a_1, \ldots, a_k]$.  To see this, observe that $\{a_1, \ldots, a_m, \partial D_1, \ldots, \partial D_n\}$ is a maximal set of disjoint meridians for $C$, and for each $1\leq i\leq n$ there exists $1\leq j\neq k \leq m$ such that $\partial D_i$ bounds a pair of pants with some $a_j$ and $a_k$.  It follows that
$$C=S[\partial D_1, \ldots, \partial D_n, a_1, \ldots, a_k] = S[a_1, \ldots, a_k]$$
where the first equality is due to Lemma \ref{compressdiscs}.
\end{proof}

\section{Small compression bodies are invariant}
\label{small}

The main goal of this section is to show that an automorphism of $\cb(S)$ sends small compression bodies to small compression bodies with the same type:

\begin{Prop}
\label{prop:small-invariant}
Suppose that $f$ is an automorphism of the compression body graph $\cb(S)$. If $a$ is a simple closed curve on $S$, then $f(S[a])=S[b]$ for some simple closed curve $b$. Moreover, $a$ is separating if and only if $b$ is separating.
\end{Prop}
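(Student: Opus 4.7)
The plan is to characterize both the height function $\mathfrak h$ of Section \ref{sec:height} and the property of being a small compression body combinatorially inside $\cb(S)$, so that any automorphism $f$ automatically preserves them.

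The first step is to show that $f$ preserves $\mathfrak h$. Since $\cb(S)$ is the comparability graph of the sub-compression-body partial order, it is perfect, and so the chromatic number of any induced subgraph agrees with its clique number, which equals the length of the longest chain in that subgraph. Proposition \ref{prop:height} then gives that the chromatic number of the set of strict sub-compression bodies of $C$ is $\mathfrak h(C)-1$. The combinatorial heart of this step is to identify this ``down-link'' inside the full link $N(C)$ using only graph data, which a priori is delicate because a comparability graph does not encode the direction of its underlying order. I would address this by induction from the extremes: height-one vertices are the poset minima and handlebodies are the poset maxima, and both are recognizable in the graph because their links lie entirely on one side of every maximal chain through them; once the extremes are located, heights of intermediate vertices can be pinned down by iteratively peeling off layers and comparing induced-link chromatic numbers. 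This yields $f$-invariance of $\mathfrak h$.

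The separating case of the proposition now follows immediately from Corollary \ref{cor:height}(1): with $g(S)\geq 2$ there are no solid tori, so the height-one vertices are precisely the separating small compression bodies, and $f$ permutes them. For the non-separating case, Corollary \ref{cor:height}(2) classifies height-two vertices as non-separating small $S[a]$ or as $S[a_1,a_2]$ with $a_1,a_2$ disjoint separating. When $g(S)=2$ only the former exist, so there is nothing to do; for $g(S)\geq 3$ I would distinguish the two types via Proposition \ref{prop:sep}(3): the doubles $S[a_1,a_2]$ have only separating meridians and are therefore contained in one of the maximal torus-boundary compression bodies of Figure \ref{maxtori}, while the non-separating small ones are not. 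These maximal torus-boundary vertices have a definite height $g(S)-1$ and are combinatorially detected as the height-$(g(S)-1)$ vertices whose entire downward closure consists of separating small compression bodies glued together by further separating compressions; this is a condition already controlled by the height machinery. Once these maximal vertices are identified, so are their sub-compression bodies of height two, and the non-separating small ones are what remains at that height.

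The principal obstacle, I expect, is the first step: the combinatorial recovery of $\mathfrak h$ and more specifically of the direction of the partial order from the graph. The argument must locate an asymmetry between the bottom and the top of the chain structure, for example by exploiting that separating small compression bodies and handlebodies differ qualitatively in the behaviour of their links, which combined with an inductive peeling argument fixes the direction and then the height.
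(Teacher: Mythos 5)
Your outline shares the paper's overall strategy (recover height combinatorially, then use height plus the separating-meridian dichotomy to pin down small compression bodies), and several of your intermediate observations are correct: height-one vertices are exactly the separating small compression bodies when $g(S)\geq 2$, the genus-two case of the non-separating step is vacuous, and Proposition~\ref{prop:sep} is the right tool for distinguishing $S[a_1,a_2]$ from non-separating $S[a]$ at height two. However, the proposal leaves the crux of the argument open. You correctly identify that a comparability graph does not encode the direction of its partial order and call this ``the principal obstacle,'' but you do not actually overcome it: ``their links lie entirely on one side of every maximal chain'' is a symmetric condition, true of both handlebodies and minimal compression bodies, so it does not let you tell top from bottom, and the subsequent ``peeling'' inherits the same ambiguity. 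The paper resolves this by first proving that the uplink and downlink are anti-connected (Lemma~\ref{lem:uniqueness}), so the join decomposition $\Link(C)=\mathrm{Link}^{+}(C)+\mathrm{Link}^{-}(C)$ is unique; then a connectivity argument (Lemma~\ref{iii}) shows a global dichotomy (height-preserving or height-\emph{reversing} everywhere); and finally reversal is ruled out by an asymmetry count---there are two homeomorphism types at height $2$ but only one at height $2g-2$ when $g(S)\geq 3$, with a separate explicit curve argument for $g(S)=2$. None of these three ingredients appears in your sketch.

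A secondary gap is in your characterization of the ``maximal torus-boundary'' compression bodies of Figure~\ref{maxtori}. Saying that their downward closure ``consists of separating small compression bodies glued together by further separating compressions\ldots already controlled by the height machinery'' is not a graph-theoretic criterion: it presupposes knowledge of the geometric type of vertices in the downlink, which is exactly what you are trying to detect. The paper's Lemma~\ref{lem:sepheight} instead uses a genuinely combinatorial invariant: a compression body $C$ has only torus interior boundary if and only if there are only finitely many sequences of minimal compressions joining $C$ to a fixed handlebody above it; once this is recognized, the height-$(g-1)$ such vertices are the ones from Figure~\ref{maxtori}, and containment in one of them characterizes the all-separating-meridian compression bodies by Proposition~\ref{prop:sep}. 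To turn your proposal into a proof you would need, at minimum, the join-uniqueness and global-coherence machinery of Lemmas~\ref{lem:uniqueness} and~\ref{iii}, an explicit argument ruling out height reversal in both genus regimes, and a concrete combinatorial detection of the torus-boundary compression bodies.
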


Recall from \S \ref{sec:height} that the \emph {height}, written $\mathfrak{h}(C)$, of an $S$-compression body $C$ is the length $k$ of any sequence of minimal compressions $$S \times [0,1] =C_0 \subset \cdots C_k =C.$$ Height one compression bodies are solid tori and small compression bodies $S[a]$, where $a$ is separating. Height two compression bodies are of the form $S[a]$, where $a$ is non-separating, or of the form $S[a,b]$, where $a,b$ are separating. See Corollary \ref{cor:height}.

The bulk of the work in Proposition \ref {prop:small-invariant} is in the following result.

\begin {Prop}
\label{prop:preservation}
Every automorphism of the compression body graph preserves height.
\end {Prop}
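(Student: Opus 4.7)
The plan is to show that $\mathfrak{h}(C)$ is encoded in the graph structure of $\cb(S)$ at $C$, so that any graph automorphism automatically preserves it. Recall that $\cb(S)$ is the comparability graph of the containment poset on nontrivial $S$-compression bodies and is therefore perfect. By Proposition~\ref{prop:height}, maximum chains in this poset (equivalently, maximum cliques in $\cb(S)$) have size $2g-1$; this is the content of Lemma~\ref{lem:chromatic} referenced in the introduction.

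The central structural observation is a join decomposition of each vertex link. Fix $C \in \cb(S)$ and set
\[
D_C = \{D \in \cb(S) : D \subsetneq C\}, \qquad U_C = \{U \in \cb(S) : C \subsetneq U\},
\]
so that $\Link(C) = D_C \sqcup U_C$ as vertex sets. For any $D \in D_C$ and $U \in U_C$, transitivity gives $D \subsetneq C \subsetneq U$, so $\{D,U\}$ is an edge of $\cb(S)$; hence $\Link(C)$ is the graph join $D_C \ast U_C$. Equivalently, in the complement graph $\overline{\Link(C)}$ there are no edges between $D_C$ and $U_C$, so these two sets are unions of connected components of $\overline{\Link(C)}$. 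Applying Proposition~\ref{prop:height} to the sub-poset of nontrivial compression bodies below $C$ and to the sub-poset between $C$ and any handlebody above, and using perfectness, gives $\chi(D_C) = \omega(D_C) = \mathfrak{h}(C)-1$ and $\chi(U_C) = \omega(U_C) = 2g-1-\mathfrak{h}(C)$, summing to $2g-2 = \omega(\Link(C))$.

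Under an automorphism $f$, the induced graph isomorphism $\Link(C) \to \Link(f(C))$ carries join decompositions to join decompositions (up to interchange of the two sides), so it preserves the unordered pair $\{\mathfrak{h}(C)-1,\, 2g-1-\mathfrak{h}(C)\}$ at every vertex. This already forces $\mathfrak{h}(f(C)) \in \{\mathfrak{h}(C),\, 2g-\mathfrak{h}(C)\}$. The principal remaining obstacle is to rule out the involutive alternative $\mathfrak{h} \mapsto 2g-\mathfrak{h}$, equivalently a global order-reversing automorphism of the poset. My plan is to exhibit a graph-theoretic asymmetry between the ``up'' and ``down'' sides of a carefully chosen link. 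A natural target is to contrast the link of a height-$1$ separating small compression body $S[a]$ (which is entirely $U_{S[a]}$) with the link of a handlebody $H$ (which is entirely $D_H$). Both are join-indecomposable subgraphs with clique number $2g-2$, but their fine structure should differ: the super-compression bodies of $S[a]$ inherit a product-type structure from the two interior-boundary components of $S[a]$ (whose genera partition $g$), while the sub-compression bodies of a handlebody admit no such canonical bipartition. Capturing this structural asymmetry by an appropriate graph invariant (for instance, chromatic or independence numbers computed inside the links of specific neighbors) would force $f$ to preserve the partial order direction globally, and hence preserve $\mathfrak{h}$ exactly.
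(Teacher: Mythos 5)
Your approach matches the paper's in outline: both proofs decompose $\mathrm{Link}(C)$ as the join of the downlink and uplink and read the height off from their clique/chromatic numbers. But your write-up leaves two genuine gaps, one technical and one that is the heart of the whole proposition.

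First, you pass from ``$\mathrm{Link}(C)=D_C + U_C$'' to ``the induced isomorphism $\mathrm{Link}(C)\to \mathrm{Link}(f(C))$ preserves the unordered pair $\{D_C,U_C\}$'' without justification. A graph isomorphism carries join decompositions to join decompositions, but it does not a priori carry \emph{this} decomposition to the canonical one $D_{f(C)}+U_{f(C)}$ unless the decomposition is unique. Uniqueness fails for general joins: if, say, $D_C$ itself split as a join $A+B$, then $\mathrm{Link}(C)=A+(B+U_C)$ is another decomposition and the automorphism could respect that one instead. What rescues this (and what the paper proves as Lemma~\ref{lem:uniqueness}) is that both $\mathrm{Link}^{+}(C)$ and $\mathrm{Link}^{-}(C)$ are \emph{anti-connected}, so the factorization is unique by Fact~\ref{fact:uniqueness}. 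Your observation that $D_C$ and $U_C$ are ``unions of connected components'' of $\overline{\mathrm{Link}(C)}$ is weaker than what you need; you must show each is a \emph{single} such component. Relatedly, before you can ``rule out the involutive alternative globally,'' you must argue that the choice between (i) preserve up/down and (ii) swap them is consistent across all vertices; this is not automatic and requires a connectivity argument along edges of $\cb(S)$ (the paper's Lemma~\ref{iii}).

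Second, and more seriously, the crux of the proposition is ruling out the height-reversing possibility $\mathfrak h\mapsto 2g-\mathfrak h$, and here you have only a sketch of an intent (``My plan is to exhibit\dots,'' ``would force\dots''), not an argument. The asymmetry you gesture at between the link of a separating small compression body and the link of a handlebody is plausible in spirit, but you never produce a concrete graph invariant and verify it distinguishes them. The paper settles this differently: for $g\geq 3$ it counts $\Mod(S)$-orbits among height-$2$ versus height-$(2g-2)$ compression bodies (two orbits versus one, using Lemma~\ref{lem:sepheight} together with Corollary~\ref{onlyone}), and for $g=2$ it runs a bespoke argument with specific curves because disjoint separating pairs do not exist. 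Some such case analysis, or a worked-out version of your asymmetry idea, is needed; as written the proposal does not complete the proof.
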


The proof of Proposition \ref{prop:preservation} will occupy most of this section; the idea is that  the height of $C$ is encoded in the chromatic numbers of certain subsets of the link of $C$ in $\cb(S)$. We describe the structure of links in \S \ref{sec:links}, and finish the proof of height preservation in \S \ref{sec:preservation}.

The invariance of small compression bodies almost follows from Proposition \ref{prop:preservation}, but one must also show that non-separating small compression bodies are not sent by $f$ to compression bodies $S[a,b]$, where $a,b$ are separating. For this, one can use:

\begin {Lem}\label {lem:sepheight}
Any height preserving automorphism $f : \cb(S) \longrightarrow\cb(S)$ preserves the set of compression bodies $C $ with only separating meridians.
\end{Lem}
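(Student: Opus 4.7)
The plan is to show that $f$ preserves the number $w(C)$ of interior boundary components of $C$, since by the equivalence of (3) and (6) in Proposition~\ref{prop:sep}, $C$ has only separating meridians if and only if $w(C) = \mathfrak{h}(C)+1$, and $f$ preserves $\mathfrak{h}$ by hypothesis.

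To recover $w(C)$ from the graph, let $\mathcal{A}(C) \subset \cb(S)$ denote the set of \emph{minimal extensions} of $C$, i.e., compression bodies at height $\mathfrak{h}(C)+1$ strictly containing $C$. This set is determined by $\cb(S)$ and $\mathfrak{h}$, hence preserved by $f$. Define a graph $G(C)$ on $\mathcal{A}(C)$ by joining distinct $D_1, D_2$ by an edge whenever there exists $D \in \cb(S)$ with $\mathfrak{h}(D) = \mathfrak{h}(C)+2$, $D \supseteq D_1, D_2$, and $|\{E \in \mathcal{A}(C) : E \subseteq D\}| = 2$. This definition is purely graph-theoretic, so $f$ induces a graph isomorphism $G(C) \cong G(f(C))$.

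The key claim is that $G(C)$ is the complete multipartite graph whose parts are indexed by the interior boundaries $F_1, \ldots, F_{w(C)}$ of $C$, grouping together minimal extensions attached (in the sense of Corollary~\ref{sum}) to the same boundary. For distinct-boundary pairs $D_1 = C \sqcup_{F_i} E_1$, $D_2 = C \sqcup_{F_j} E_2$ with $i \neq j$, the join $D_1 \vee D_2 = C \sqcup_{F_i} E_1 \sqcup_{F_j} E_2$ has height $\mathfrak{h}(C)+2$, and by inspection of the decomposition from Corollary~\ref{sum} it contains exactly $\{D_1, D_2\}$ from $\mathcal{A}(C)$ (no atomic extension on a third boundary can embed in the trivial piece over that boundary), producing an edge. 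For same-boundary pairs on $F_i$, either there is no common super at height $\mathfrak{h}(C)+2$ -- automatic if $F_i$ is a torus (no height-2 torus-compression body exists) or if the underlying separating curves $c_1, c_2$ on $F_i$ fail to have disjoint representatives -- or their unique such super $C \sqcup_{F_i} F_i[c_1,c_2]$ contains a \emph{third} minimal extension coming from a separating meridian $d$ of $F_i[c_1,c_2]$ bounded by a pair of pants containing $c_1, c_2$ inside $F_i$, by the argument of Proposition~\ref{smalldisc} applied within $F_i$ (the existence of disjoint separating curves forces $g(F_i) \geq 3$, which is precisely what is needed for this pair-of-pants construction to fit). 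In either subcase no edge is drawn.

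Since the chromatic number of a complete multipartite graph equals the number of its parts, we conclude $w(C) = \chi(G(C))$, which is preserved by $f$. Hence $w(C) = w(f(C))$, and the lemma follows. The main technical obstacle is the verification of the third-meridian existence in the compatible same-boundary case; the delicate subcase is $g(F_i) = 3$, where the pair-of-pants argument from Proposition~\ref{smalldisc} exactly accommodates the splitting of $F_i \setminus (c_1 \cup c_2)$ into two once-punctured tori and a twice-punctured torus.
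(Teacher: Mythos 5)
The approach is genuinely different from the paper's. The paper first shows that $f$ preserves containment, handlebodies, and compression bodies with only torus interior boundary components (via a finiteness criterion on sequences of minimal compressions through $C$ inside a handlebody), and then appeals to the equivalence $(8)$ in Proposition~\ref{prop:sep}. You instead recover the \emph{number} of interior boundary components $w(C)$ directly from the graph and use equivalence $(6)$, by building the auxiliary graph $G(C)$ on minimal extensions and reading off $w(C)$ as its chromatic number. That is a nice idea and, with the $|\cdot|=2$ filter built into your edge relation, does seem to produce a complete $w(C)$-partite graph.

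However, your verification of the multipartite structure has a genuine error. You assert that if $c_1, c_2$ are intersecting separating curves on the same $F_i$, then $D_1, D_2$ have \emph{no} common super at height $\mathfrak{h}(C)+2$. This is false. If $c_1$ and $c_2$ both bound punctured tori in $F_i$ containing a common non-separating curve $a$ (which happens, for example, whenever $c_2$ is a Dehn twist of $c_1$ about a curve meeting $a$ once inside a punctured torus bounded by $c_1$), then by Proposition~\ref{smalldisc} (which applies to $F_i[a]$ as well as to $S[a]$) both $c_1, c_2 \in \mathcal{D}(F_i[a])$, and $D = C \sqcup_{F_i} F_i[a]$ is a common super of the right height. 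What rescues the argument is not the non-existence of a super but the counting condition: any such $D$ contains infinitely many minimal extensions of $C$, since $\mathcal{D}(F_i[a])$ is infinite. You never make this observation, and your stated dichotomy (no super if intersecting; a unique super with a third meridian if disjoint) therefore has a hole. Relatedly, the claim that the third meridian in the disjoint case follows from ``the argument of Proposition~\ref{smalldisc} applied within $F_i$'' is not quite right: Proposition~\ref{smalldisc} characterizes $\mathcal{D}(F[a])$ for $a$ \emph{non-separating}, whereas what you need is the observation that a pair of pants in $F_i$ with boundary $c_1 \cup c_2 \cup d$, capped off with the two compressing disks, gives a disk bounded by $d$; that is a separate (if analogous) argument and should be spelled out. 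A cleaner repair treats all same-boundary pairs uniformly: every height-$2$ $F_i$-compression body ($F_i[a]$ with $a$ non-separating, or $F_i[e_1,e_2]$ with $e_1,e_2$ disjoint separating) has infinitely many meridians, so the condition $|\{E \in \mathcal{A}(C) : E \subseteq D\}| = 2$ always fails when $D_1, D_2$ are attached along the same $F_i$; then one only needs to check that it holds for the obvious cross-boundary super.
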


Such $C$ admit a number of different characterizations, see Proposition \ref{prop:sep}. In particular, by $(1)\implies (3)$, a height two compression body $S[a,b]$, with $a,b$ separating, has only separating meridians. A non-separating small compression body clearly does not.

\begin {proof}

Note that $f$ preserves containment, since one can distinguish between the edge relations $C \subset D$ and $D \subset C$ using height.  It must then preserve the set of chains $C_1 \subset C_2 \subset\cdots$ of compression bodies, and therefore the set of maximal such chains, i.e.\ sequences of minimal compressions. Also $f$ sends handlebodies to handlebodies, since these are exactly the compression bodies with height $2g(S)-1$.  

It follows that $f$ preserves the set of compression bodies $C$ that have only torus interior boundary components, as these $C$ can be characterized by the fact that after choosing a handlebody $H \supset C$, there are only finitely many sequences of minimal compressions for $H$ that pass through $C$. Here, solid tori can be attached to the interior boundary components of $C $ in any order, but the attachment maps are prescribed by $H$. If $C$ has a higher genus interior boundary component $F$, though, there are infinitely many intermediate compressions to choose from when attaching a handlebody to $F$.

In particular, $f$ must preserve the height $g(S)-1$ compression bodies $C$ that have only torus interior boundary components, as shown in Figure \ref{maxtori}. Here, the number of boundary components is $g(S)$, since $g$ additional compressions are required to reach a handlebody, which has height $2g(S)-1$.  By Proposition \ref{prop:sep}, the compression bodies that are contained in such $C$ are exactly those that have only separating meridians.\end {proof}

The rest of the section is devoted to the proof of Proposition \ref {prop:preservation}, which states that automorphisms of the compression body graph preserve height.

\subsection{Links in $\cb(S)$}
\label {sec:links}
Given a graph $\Gamma$ we denote the edge relation in $\Gamma$ by $\sim_\Gamma$.  We will simply use $\sim$ when the graph is clear from context.
The \textit{join} of two graphs $\Gamma$ and  $\Gamma'$, written $\Gamma+\Gamma'$, is the graph with vertex set $\Gamma \sqcup \Gamma'$, and where vertices $v, w$ are adjacent if either:
\begin{itemize}
\item[(i)]
$v,w\in \Gamma$ and $v\sim_\Gamma w$, or
\item[(ii)]
$v,w\in \Gamma'$ and $v\sim_{\Gamma'} w$, or
\item[(iii)]
$v\in \Gamma$ and  $w\in \Gamma'$.
\end{itemize}
In particular, note that if two vertices in a graph join are not connected by an edge, they must lie in the same factor. Here is a useful consequence:

\begin {Fact}[Uniqueness of join]\label {fact:uniqueness}
Suppose $\Gamma,\Gamma'$ are anti-connected graphs, and $\Delta,\Delta'$ are arbitrary. If $\Gamma + \Gamma' =\Delta + \Delta'$, then up to exchanging factors, $\Gamma=\Delta$ and $\Gamma'=\Delta'$.
\end {Fact}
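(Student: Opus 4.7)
The plan is to pass to complement graphs, where joins become disjoint unions, and then use that a connected graph's decomposition into connected components is unique.

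First I would observe the key identity: for any two graphs $A$ and $B$ sharing no vertices, the complement of the join satisfies
\[
\overline{A + B} \;=\; \overline{A} \,\sqcup\, \overline{B},
\]
because by construction every pair consisting of a vertex in $A$ and a vertex in $B$ is adjacent in $A+B$, hence non-adjacent in the complement, while within each factor the complement edges are precisely the edges of $\overline{A}$ or $\overline{B}$. Applying this to both sides of the hypothesized equality $\Gamma + \Gamma' = \Delta + \Delta'$ yields
\[
\overline{\Gamma} \,\sqcup\, \overline{\Gamma'} \;=\; \overline{\Delta} \,\sqcup\, \overline{\Delta'}
\]
as graphs on a common vertex set.

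Next I would invoke the hypothesis that $\Gamma$ and $\Gamma'$ are anti-connected, i.e.\ that $\overline{\Gamma}$ and $\overline{\Gamma'}$ are each connected. Then the right-hand side above exhibits the set of connected components of the left-hand graph as exactly $\{\overline{\Gamma}, \overline{\Gamma'}\}$. Since connected components of a graph are determined by the graph, and each of $\overline{\Delta}$ and $\overline{\Delta'}$ is a union of connected components of $\overline{\Delta} \sqcup \overline{\Delta'}$, each must be a union of the two components $\overline{\Gamma}, \overline{\Gamma'}$. Assuming $\Delta$ and $\Delta'$ are both nonempty (which is the content of having a genuine two-factor join), the only possibilities are $\{\overline{\Delta}, \overline{\Delta'}\} = \{\overline{\Gamma}, \overline{\Gamma'}\}$, i.e.\ after possibly swapping factors, $\overline{\Delta} = \overline{\Gamma}$ and $\overline{\Delta'} = \overline{\Gamma'}$. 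Taking complements back gives $\Delta = \Gamma$ and $\Delta' = \Gamma'$, as desired.

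There is no real obstacle here — the only subtlety is making sure the complement-of-join identity is used on the correct side: the anti-connectedness hypothesis is exactly what is needed so that the decomposition into components is the one coming from the join, and without it the decomposition $\overline{\Gamma}\sqcup\overline{\Gamma'}$ could be refined further, spoiling uniqueness.
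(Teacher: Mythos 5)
Your proof is correct and is essentially the paper's argument in disguise: an anti-path in a graph $G$ is precisely a path in the complement $\overline{G}$, and the observation that anti-paths in a join $\Delta + \Delta'$ cannot leave a factor is exactly your identity $\overline{\Delta + \Delta'} = \overline{\Delta} \sqcup \overline{\Delta'}$. The paper states this directly in terms of anti-paths; you package it via complements and uniqueness of the connected-component decomposition, which is a clean and standard way to phrase the same idea.
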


Here, a graph is \emph {anti-connected} if any two vertices can be connected by an anti-path, i.e.\ a sequence of vertices $(v_i)$ where $v_i \not\sim v_{i+1}$ for all $i $.  To prove the fact, just assume there is some $v\in \Delta \cap \Gamma$ and note that any anti-path starting at $v$ stays in $\Delta$.

As mentioned above, the link of a compression body $C \in\cb(S)$ decomposes as a join
$$\mathrm{Link}(C)=\mathrm{Link}^{-}(C)+\mathrm{Link}^{+}(C),$$ where
$$\mathrm{Link}^{+}(C) = \{D\in \cb(S) \colon C\subset D\}$$
and
$$\mathrm{Link}^{-}(C) = \{D\in \cb(S) \colon D\subset C\}$$
are the \emph{uplink} and \emph{downlink} of $C$, respectively.

\begin{Lem}
\label{lem:uniqueness}
If $C\in \cb(S)$, the graphs $\mathrm{Link}^{+}(C)$ and $\mathrm{Link}^{-}(C)$ are anti-connected. 
\end{Lem}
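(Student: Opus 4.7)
My plan is to connect any two vertices of $\mathrm{Link}^{\pm}(C)$ by an explicit anti-path of length at most three, with internal vertices being minimal compression bodies of a suitable flavor. The chief device throughout is Corollary \ref{cor:minimal}: two distinct minimal compression bodies are always incomparable.

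I first handle the downlink. The key observation is that for $D \in \mathrm{Link}^{-}(C)$ and any separating curve $a \in \cd(C) \smallsetminus \cd(D)$, the small compression body $S[a]$ lies in $\mathrm{Link}^{-}(C)$ and is incomparable with $D$. Indeed, $\cd(S[a]) = \{a\}$ by Proposition \ref{smalldisc}, so $a \notin \cd(D)$ gives $S[a] \not\subset D$, while $\cd(D) \subset \{a\}$ would force $D = S[a]$ and hence $a \in \cd(D)$, so $D \not\subset S[a]$; the strict inclusion $S[a] \subsetneq C$ is automatic because $|\cd(C)| \geq 2$ whenever $\mathrm{Link}^{-}(C)$ is nonempty. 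Given $D_1, D_2 \in \mathrm{Link}^{-}(C)$, selecting distinct separating meridians $a_i \in \cd(C) \smallsetminus \cd(D_i)$ therefore yields the anti-path $D_1, S[a_1], S[a_2], D_2$, since $S[a_1]$ and $S[a_2]$ are distinct minimal compression bodies.

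To produce the $a_i$, I use Corollary \ref{sum}: since $D_i \subsetneq C$, the inclusion is witnessed by a nontrivial compression body $G_i$ glued to $D_i$ along some interior boundary component $F_i \subset \partial_- D_i$. Any meridian $\alpha$ of $G_i$ on $F_i$, pushed through the interval-bundle structure of $D_i$ to a curve $\alpha' \subset S$, lies in $\cd(C) \smallsetminus \cd(D_i)$: it is a meridian of $C$ because $\alpha$ bounds a disk in $G_i \subset C$, while incompressibility of $F_i$ in $D_i$ prevents $\alpha$ (and hence $\alpha'$) from being nullhomotopic in $D_i$. If $\alpha'$ is separating on $S$, take $a_i = \alpha'$; otherwise, by Proposition \ref{smalldisc} the band sums $B(\alpha', \beta)$ supply infinitely many separating meridians of $S[\alpha'] \subset C$, from which one must extract one outside $\cd(D_i)$. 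Showing that not all band sums of a non-meridian $\alpha'$ can simultaneously lie in $\cd(D_i)$ is the technical heart of the argument, and I expect it to go through a finiteness analysis of the associated compressing disks.

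The uplink case is dual. By Corollary \ref{sum}, any $D \in \mathrm{Link}^{+}(C)$ has the form $C \cup_{\partial_- C} G$ with some component $G|_F$ nontrivial on an interior boundary $F \subset \partial_- C$. A minimal $F$-compression body $M$ (a solid torus if $F$ is a torus, or $F[b]$ for separating $b \subset F$ if $g(F) \geq 2$) whose unique meridian avoids $\cd(G|_F)$ yields $E := C \cup_F M \in \mathrm{Link}^{+}(C)$ incomparable with $D$: $E \not\subset D$ because the meridian of $M$ is not a meridian of $G|_F$, and $D \not\subset E$ because minimality of $M$ would otherwise force $G|_F = M$ and hence $D = E$. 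Infinitely many such $M$ exist, enabling the same three-step anti-path between arbitrary $D_1, D_2$. The hard part throughout will be verifying abundance of the meridians (respectively, minimal extensions) needed for these constructions, which I expect to reduce to a careful disk-set counting using the incompressibility of $\partial_- D$ and Proposition \ref{smalldisc}.
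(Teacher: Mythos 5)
Your construction is a genuinely different route from the paper's.  The paper anti-connects $\mathrm{Link}^{+}(C)$ (resp.\ $\mathrm{Link}^{-}(C)$) via a fixed ``hub'' of extreme height --- a handlebody $H\supset C$ (resp.\ a compression body $D\subset C$ of height $\mathfrak{h}(C)-1$) --- together with a family of vertices of every intermediate height, each anti-adjacent to the hub and connected to the target vertices by the observation that \emph{equal height implies incomparability}; this gives anti-paths of length four.  You instead aim for length-three anti-paths through \emph{minimal} intermediate vertices (height-one small compression bodies for the downlink; $C\cup_F M$ with $M$ a minimal $F$-compression body for the uplink), with incomparability obtained from minimality rather than from height coincidence.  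Your uplink argument is essentially sound: using height, $D\subset E=C\cup_F M$ forces $\mathfrak{h}(D)=\mathfrak{h}(C)+1=\mathfrak{h}(E)$, hence $D=E$, which the choice $m\notin\cd(G|_F)$ rules out via Corollary~\ref{disccont}; and the abundance of such minimal $M$ is a short argument since any compression body's disk set misses infinitely many curves of each type.

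The downlink case, however, contains a real gap, and you have correctly identified it rather than papered over it.  You need a \emph{separating} curve $a_i\in\cd(C)\smallsetminus\cd(D_i)$, but the curve $\alpha'$ you extract from Corollary~\ref{sum} may well be non-separating on $S$; and it is genuinely not automatic that some band sum $B(\alpha',\beta)$ escapes $\cd(D_i)$.  In fact many band sums are forced \emph{into} $\cd(D_i)$: for instance, if $B(\alpha',\beta_1)$, $B(\alpha',\beta_2)\in\cd(D_i)$ are disjoint, then any third band sum cut out inside the genus-one subsurface $T_1\cup T_2$ bounds a disk in $S[\partial T_1,\partial T_2]\subset D_i$, so naive surgery does not obviously produce an escaping curve.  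The assertion is plausible (and I believe true), but filling it requires a careful analysis you have not supplied --- this is more than a routine ``finiteness'' check.  The paper's construction avoids this pinch point by taking as intermediate vertices \emph{all} sub-compression bodies of $C$ containing $a$ as a meridian, not just $S[a]$, so that the issue of $a$'s type only matters at height one, where it is handled with a single (also terse) assertion about separating meridians that are not band sums.  If you want to keep your cleaner three-vertex paths, you would need to prove the existence statement outright, or fall back to the paper's hub-and-height scheme.
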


So by Fact \ref {fact:uniqueness}, the only way to write $\mathrm{Link}(C)$ as a join is using  the uplink and downlink.

\begin{proof}
We'll say two vertices $v,w$ are \emph {anti-adjacent} if $v \not \sim w$ in $\cb(S)$.

We first deal with $\mathrm{Link}^{+}(C)$.  Pick a handlebody $H \supset C$ that lies in $\Gamma$. By Corollary \ref{sum}, $D$ is obtained from $C $ by attaching handlebodies (with smaller genus) to the components of $\partial_- C $. In particular, there is a non-separating simple closed curve $a $ on some component $F \subset \partial_-C$ that bounds a disk in $H$. 

Choose a simple closed curve $b$ on $F$ with $i(a,b)=1$, and let $C[b]$ be the compression body obtained from $C$ by compressing $b$. Then $C[b]$ and $H$ are anti-adjacent, since two meridians in a handlebody cannot intersect once.  Moreover, every compression body containing $C[b]$ is anti-adjacent to $H$ as well.

Any two compression bodies with the same height are anti-adjacent. The compression body $C[b]$ and its uplink represent all heights in $\mathrm{Link}^{+}(C)$, except $\mathfrak{h} (C)+1$ when $g(F)\geq 2$. In this last case, though, there is a separating simple closed curve $c$ on $F$ that is not a band sum with $b$, and then $C[c] $ is a height $\mathfrak{h} (C)+1$ compression body that is anti-adjacent to $ C[b]$. Therefore, the uplink $\mathrm{Link}^{+}(C)$ is anti-connected.

The argument for $\mathrm{Link}^{-}(C)$ is similar. Start with a compression body $D\subset C$ with height $\mathfrak{h} (C)-1$, and pick some meridian $a$ of $C$ that is not a meridian in $D$. Then every sub-compression body of $C$ that contains $a$ is anti-adjacent to $D$. These fill out all heights in $\mathrm{Link}^{-}(C)$, except height one if $a$ is not separating. In this latter case, $\mathfrak{h} (C)-1\geq 2$, so $C$ must have a separating meridian $b$ that is not a band sum with $a$. Then $S[b]$ is a height one compression body that is anti-adjacent to $S[a]$, and the downlink is anti-connected.
\end{proof}

A \textit{clique} in a graph $\Gamma$ is a complete subgraph and the \textit{clique number}, written $\omega(\Gamma)$, is the number of vertices in a maximal clique.  A \textit{proper coloring} of a graph is a labeling of the vertices such that vertices connected by an edge are assigned different labels.  The minimal number of colors required to give a proper coloring of $\Gamma$ is the \textit{chromatic number}, written  $\chi(\Gamma)$.  It is clear that 
\begin{equation}
\label{eq:clique}
\omega(\Gamma) \leq \chi(\Gamma).
\end{equation}

\begin{Lem}
\label{lem:chromatic}
If $C\in \cb(S)$, the clique and chromatic numbers satisfy:
$$\omega(\mathrm{Link}^{-}(C)) = \chi(\mathrm{Link}^{-}(C)) = \mathfrak{h}(C) - 1$$
and
$$\omega(\mathrm{Link}^{+}(C)) = \chi(\mathrm{Link}^{+}(C)) = 2g-1 - \mathfrak{h}(C).$$
\end{Lem}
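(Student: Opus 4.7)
The plan is to prove the two equalities by sandwiching $\omega$ and $\chi$ between the same bound $\mathfrak{h}(C)-1$ (respectively $2g-1-\mathfrak{h}(C)$). The key observation is that every clique in $\mathrm{Link}^{\pm}(C)$ is totally ordered by containment, since any two adjacent vertices satisfy $D_1 \subset D_2$ or $D_2 \subset D_1$, and among a pairwise adjacent collection this must be a total order. Thus cliques in $\mathrm{Link}^{-}(C)$ are chains of proper sub-compression bodies of $C$, and cliques in $\mathrm{Link}^{+}(C)$ are chains of proper super-compression bodies.

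To bound $\omega$ from above, I would invoke the monotonicity from Proposition~\ref{prop:height}: $C \subset D$ implies $\mathfrak{h}(C) < \mathfrak{h}(D)$. So any chain $D_1 \subset \cdots \subset D_m$ in $\mathrm{Link}^{-}(C)$ has heights strictly increasing in $\{1,\ldots,\mathfrak{h}(C)-1\}$, giving $\omega(\mathrm{Link}^{-}(C)) \leq \mathfrak{h}(C) - 1$; the uplink is treated symmetrically with heights in $\{\mathfrak{h}(C)+1,\ldots,2g-1\}$. For the matching lower bounds, I would use Lemma~\ref{lem:curvetosmc} (and the discussion following it): any compression body $C$ admits a sequence of minimal compressions $S\times[0,1]=C_0\subset C_1\subset\cdots\subset C_{\mathfrak{h}(C)}=C$, whose intermediate terms $C_1,\ldots,C_{\mathfrak{h}(C)-1}$ are a clique of size $\mathfrak{h}(C)-1$ in $\mathrm{Link}^{-}(C)$. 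For the uplink, any sequence of minimal compressions of $C$ can be extended (e.g.\ by compressing further until reaching a handlebody, which has height $2g-1$), yielding a clique of size $2g-1-\mathfrak{h}(C)$ in $\mathrm{Link}^{+}(C)$.

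For the chromatic number, the coloring by height does the job: assign each $D\in\mathrm{Link}^{\pm}(C)$ the color $\mathfrak{h}(D)$. Since adjacent vertices in $\cb(S)$ are comparable by inclusion and height is strictly monotone, adjacent vertices receive distinct colors, so this is a proper coloring. The number of colors used is at most $\mathfrak{h}(C)-1$ for the downlink and $2g-1-\mathfrak{h}(C)$ for the uplink, giving $\chi \leq$ the respective bound. Combined with the general inequality $\omega(\Gamma) \leq \chi(\Gamma)$ from \eqref{eq:clique} and the exact value of $\omega$ computed above, both inequalities collapse to equalities.

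There is no serious obstacle here: the statement is essentially a structural fact about the chain-poset structure of $\mathrm{Link}^{\pm}(C)$, and the only nontrivial ingredient is the strict monotonicity and attainability of height provided by Proposition~\ref{prop:height} and Lemma~\ref{lem:curvetosmc}. The mildest care needed is in checking that the extension of a sequence of minimal compressions through $C$ up to a handlebody is always possible, which follows from the fact that every chain of $S$-compression bodies sits inside a maximal one.
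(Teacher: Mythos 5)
Your proof is correct and follows essentially the same route as the paper: the height-labeling gives a proper coloring so $\chi(\mathrm{Link}^{\mp}(C)) \leq \mathfrak{h}(C)-1$ (resp. $2g-1-\mathfrak{h}(C)$), a maximal chain of sub- (resp. super-) compression bodies exhibits a clique of that size so $\omega \geq$ the same bound, and the general inequality $\omega\leq\chi$ squeezes everything to equality. The paper omits your separate direct argument for $\omega\leq\chi\leq k$ as an upper bound on $\omega$ (it's redundant once $\chi\leq k$ is known), and it leaves the uplink case to the reader, whereas you correctly spell out why a sequence of minimal compressions extends through $C$ up to a handlebody of height $2g-1$.
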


\begin{proof}
Labelling a vertex of $\mathrm{Link}^{-}(C)$ by its height is a proper coloring, so $\chi(\mathrm{Link}^{-}(C)) \leq \mathfrak{h}(C)-1$. The induced subgraph on any maximal chain of sub-compression bodies $ C_1 \subset \cdots \subset C_{\mathfrak{h}(C)-1}=C$ is a complete graph, so $\omega(\mathrm{Link}^{-}(C)) \geq \mathfrak{h}(C) - 1$.  
Therefore,
$$\omega(\mathrm{Link}^{-}(C)) = \chi(\mathrm{Link}^{-}(C)) = \mathfrak{h}(C) - 1$$
by \eqref{eq:clique}. The case for uplinks is similar.
\end{proof}

\subsection{Automorphisms preserve height: the proof of Proposition \ref {prop:preservation}.}
\label{sec:preservation}

Fix an automorphism $f\in \Aut(\cb(S))$.  If $C\in \cb(S)$, we can write
$$\Link(f(C)) = \mathrm{Link}^{+}(f(C)) + \mathrm{Link}^{-}(f(C)) = f(\mathrm{Link}^{+}(C)) + f(\mathrm{Link}^{-}(C)).$$
The uniqueness of the join (Lemma \ref{lem:uniqueness}) that implies that either:
\begin{itemize}
\item[(i)]
$f(\mathrm{Link}^{+}(C)) = \mathrm{Link}^{+}(f(C)) \ \text{ and } \ f(\mathrm{Link}^{-}(C)) = \mathrm{Link}^{-}(f(C))$, or
\item[(ii)]
$f(\mathrm{Link}^{+}(C)) = \mathrm{Link}^{-}(f(C))  \ \text{ and } \  f(\mathrm{Link}^{-}(C)) = \mathrm{Link}^{+}(f(C)).$ 
\end{itemize}

\begin {Lem}\label {iii}
Either $(i)$ holds for all $C\in \cb(S)$, or $(ii)$ holds for all $C\in \cb(S)$.
\end {Lem}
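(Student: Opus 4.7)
The plan is to show that the subsets $V_+ = \{C \in \cb(S) : (i) \text{ holds for } C\}$ and $V_- = \{C \in \cb(S) : (ii) \text{ holds for } C\}$ are each closed under the edge relation of $\cb(S)$, and then to verify that $\cb(S)$ is connected so that one of them is forced to be empty.

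The heart of the matter is the local step: if $C \subsetneq D$ is an edge of $\cb(S)$ with $(i)$ holding for $C$, then $(i)$ must hold for $D$ as well. Indeed, applying $(i)$ to the upward edge at $C$ gives $f(D) \in f(\mathrm{Link}^+(C)) = \mathrm{Link}^+(f(C))$, hence $f(C) \subsetneq f(D)$. Viewing the same edge from $D$'s perspective, $C \in \mathrm{Link}^-(D)$; if $(ii)$ held for $D$, then $f(C) \in f(\mathrm{Link}^-(D)) = \mathrm{Link}^+(f(D))$, yielding the reverse inclusion $f(D) \subsetneq f(C)$. Combined these would force $f(C) = f(D)$ by the remark following Corollary \ref{disccont}, contradicting the injectivity of the bijection $f$. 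A symmetric argument handles the case where $(ii)$ holds for $C$.

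To finish, I would verify connectivity of $\cb(S)$. Every nontrivial $C$ is either equal or adjacent to $S[a]$ for any choice of meridian $a\in \cd(C)$, so it suffices to connect arbitrary small compression bodies $S[a]$ and $S[b]$. Since $g(S) \geq 2$, the curve graph $\cc(S)$ is connected; choose a sequence $a = a_0, \ldots, a_n = b$ with consecutive pairs disjoint and non-isotopic. For each $i$, either $a_{i+1}$ is already a meridian of $S[a_i]$ (so $S[a_{i+1}] \subset S[a_i]$ gives a direct edge), or else $S[a_i, a_{i+1}]$ properly contains both $S[a_i]$ and $S[a_{i+1}]$, giving an edge path of length two. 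The main subtlety is confined to the local step: one really does need to invoke injectivity of $f$ to rule out the crossed configuration, since the structural work of distinguishing uplinks and downlinks has already been done in Lemma \ref{lem:uniqueness}.
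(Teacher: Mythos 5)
Your proof is correct and follows the same overall shape as the paper's — prove a local statement along edges, then invoke connectedness of $\cb(S)$ — but your local step is handled via a genuinely different and more economical argument. The paper's proof works at the level of the four sets $\mathrm{Link}^{\geq}(C)$, $\mathrm{Link}^{\leq}(C)$, $\mathrm{Link}^{\geq}(D)$, $\mathrm{Link}^{\leq}(D)$, asserting that the only inclusions present are $\mathrm{Link}^{\geq}(C) \supset \mathrm{Link}^{\geq}(D)$ and $\mathrm{Link}^{\leq}(C) \subset \mathrm{Link}^{\leq}(D)$, and then observes that a crossed configuration would produce a forbidden $\mathrm{Link}^{\geq}$-inside-$\mathrm{Link}^{\leq}$ inclusion; part of the work (ruling out such an inclusion for distinct vertices) is left to the reader. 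You instead track just the two images $f(C)$, $f(D)$ of the endpoints of the edge: condition $(i)$ at $C$ forces $f(C)\subsetneq f(D)$, condition $(ii)$ at $D$ forces $f(D)\subsetneq f(C)$, and these two strict containments are immediately incompatible (equivalently, they give $f(C)=f(D)$ by Corollary \ref{disccont}, violating bijectivity). This avoids the set-level inclusion analysis entirely, which I find sharper. You also supply a proof that $\cb(S)$ is connected (by reducing to small compression bodies and bridging curve-graph edges via $S[a_i,a_{i+1}]$); the paper simply asserts connectedness, so your filling that in is legitimate additional content. One trivial nit: you cite ``injectivity'' of $f$ at the punchline, but in fact no property of $f$ is needed there beyond the two containments already derived — they contradict each other directly in the partial order.
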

\begin {proof}
Since the graph $\cb(S)$ is connected, it suffices to show that when two compression bodies are adjacent, either $(i)$ holds for both or $(ii)$ holds for both. For convenience, let $$ {\mathrm{Link}^{\geq}(C)} = \{C\}\cup \mathrm{Link}^{+}(C) \ \text{ and } \ { \mathrm{Link}^{\leq}(C)} = \{C\}\cup \mathrm{Link}^{-}(C).$$ Then if $C \subset D$, the only inclusions that are present between the four sets
$$ {\mathrm{Link}^{\geq}(C)}, \  {\mathrm{Link}^{\leq}(C)}, \  {\mathrm{Link}^{\geq}(D)}, \  {\mathrm{Link}^{\leq}(D)}$$
are that $ {\mathrm{Link}^{\geq}(C)} \supset  {\mathrm{Link}^{\geq}(D)}$ and $ {\mathrm{Link}^{\leq}(C)} \subset  {\mathrm{Link}^{\leq}(D)}$.  Here, it is necessary to add in $C$ to the up and down links,for if $H$ is a handlebody, $\mathrm{Link}^{+}(H)$ is empty, and is included in all four sets above. Similarly, $\mathrm{Link}^{-}(H)$ is empty when $C$ is a separating small compression body.

In conclusion, it cannot be the case that up and down links are switched for $C$, but preserved for $D$ (or vice versa), since then when considering $f(C)$ and $f(D)$, one would see some $\mathrm{Link}^{\geq}$ included in a $\mathrm{Link}^{\leq}$ (or vice versa).
\end {proof}

By Lemma \ref{lem:chromatic}, the height of a compression body can be calculated from the chromatic number of its uplink, or of its downlink. So, in light of Lemma \ref{iii}, $f$ is either height preserving (in which case we are done) or `height reversing', that is
\begin {equation}\mathfrak{h}(f(C)) = 2g-1-\mathfrak{h}(C), \text{ for all } C\in \cb(S).\label {hr}\end{equation}

Assuming \eqref{hr}, we break into cases.  When $g(S)\geq 3$, Corollary \ref{cor:height} implies that there are two types of height $2$ compression bodies: non-separating small compression bodies, and compression bodies of the form $S[a,b]$, where $a,b$ are separating and disjoint. By Lemma \ref{lem:sepheight}, any height preserving automorphism of $\cb(S)$ preserves these two types. On the other hand, every compression body of height $2g-2$ has a single torus interior boundary component, so by Corollary \ref {onlyone}, they are all homeomorphic. Therefore, $\Mod(S)$ acts transitively on height $2g-2$ compression bodies. This action is conjugated by $f$ to a transitive, height preserving action on height two compression bodies, a contradiction.

When $S$ has genus two, the argument above fails since a genus two surface does not admit a pair of disjoint separating curves.  Here, there are three possible heights:
\begin {description}[labelindent=.5cm]
\item [$(\mathfrak h = 1)$]  separating small compression bodies,
\item [$(\mathfrak h = 2)$]  non-separating small compression bodies,
\item [$(\mathfrak h = 3)$]  handlebodies.
\end {description}
As $f$ is height reversing, the set of non-separating small compression bodies is left invariant, while separating small compression bodies and handlebodies are exchanged.  Note that since $f$ reverses height, it reverses the order of containment: $C \subset D \implies f(C)\supset f(D)$. 

\begin{figure}
\centering
\includegraphics{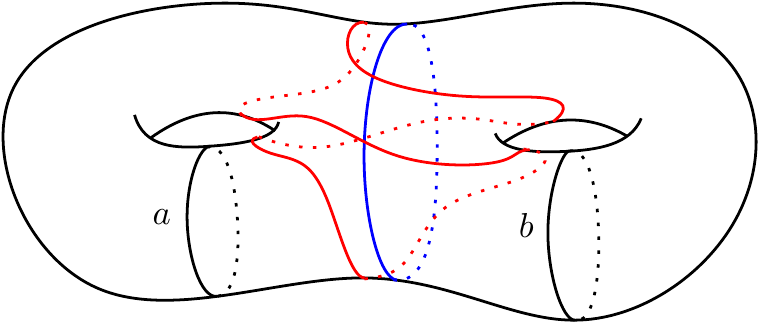}
\caption{The two curves above are both meridians in only a single handlebody, while there are infinitely many separating curves that are disjoint from both. }
\label{fig:genustwo}
\end{figure}

Let $a,b$ be the curves indicated in Figure \ref{fig:genustwo}, and let $$f(S[a])=S[a'], \ \ f(S[b])=S[b'].$$
As $S[a]$ and $S[b]$ are contained in a common handlebody, $S[a']$ and $S[b']$ both contain the same separating small compression body, i.e. there are punctured tori $T_a$ and $T_b$ containing $a',b'$ with $\partial T_a=\partial T_b$. If $T_a=T_b$, then $a',b'$ have nonzero algebraic intersection number, so $S[a']$ and $S[b']$ cannot be contained in the same handlebody, contradicting that $a,b$ both contain a common separating small compression body. Therefore, we must have $T_a\neq T_b$, in which case the configuration of $a',b'$ is the same, up to homeomorphism, as that of $a,b$. But this is a contradiction, since then $S[a'],S[b']$ contain infinitely many common separating small compression bodies, while $S[a],S[b]$ are contained in only a single handlebody.

\section{The torus complex}
\label{torus}

The \textit{torus complex} associated to a surface $S$, written $\tc(S)$, is the complex whose vertices are isotopy classes of simple non-separating curves and where the collection of vertices $\{a_0, \ldots, a_k\}$ spans a $k$-dimensional simplex if there exists an a punctured torus $T \subset S$ with $a_i\subset T$ for every $1\leq i \leq k$.  Note this is an infinite dimensional complex.  

As described in the introduction, the automorphism group of the compression body graph can be determined using the following theorem.

\vspace{11pt}
\textbf{Theorem \ref{thm:torus-aut}}
\textit{
The natural map $\Mod^\pm(S) \to \Aut(\tc(S))$ is a surjection.
}
\vspace{11pt}

As with automorphisms of the curve complex and of $\cb(S)$, the map is an isomorphism unless $S$ has genus two, in which case the kernel is generated by the hyperelliptic involution. 
We will prove surjectivity by showing that an automorphism of $\tc(S)$ induces an automorphism of the \emph {Schmutz graph}. Here, the Schmutz graph $\cN(S)$ has the same vertex set as $\tc(S)$, but edges connect pairs of vertices that intersect once. Schmutz~\cite{schmutzmapping} proved that every automorphism of $\cN(S)$ is induced by a mapping class.

We will need the following definition.

\begin{Def}
A \emph {triangle} in $\tc(S)$ is a triple of vertices $a,b,c$, where each pair of vertices is connected by an edge. A triangle is \textit{empty} if it does not bound a 2-simplex.
\end{Def}

\begin{figure}
\centering
\includegraphics{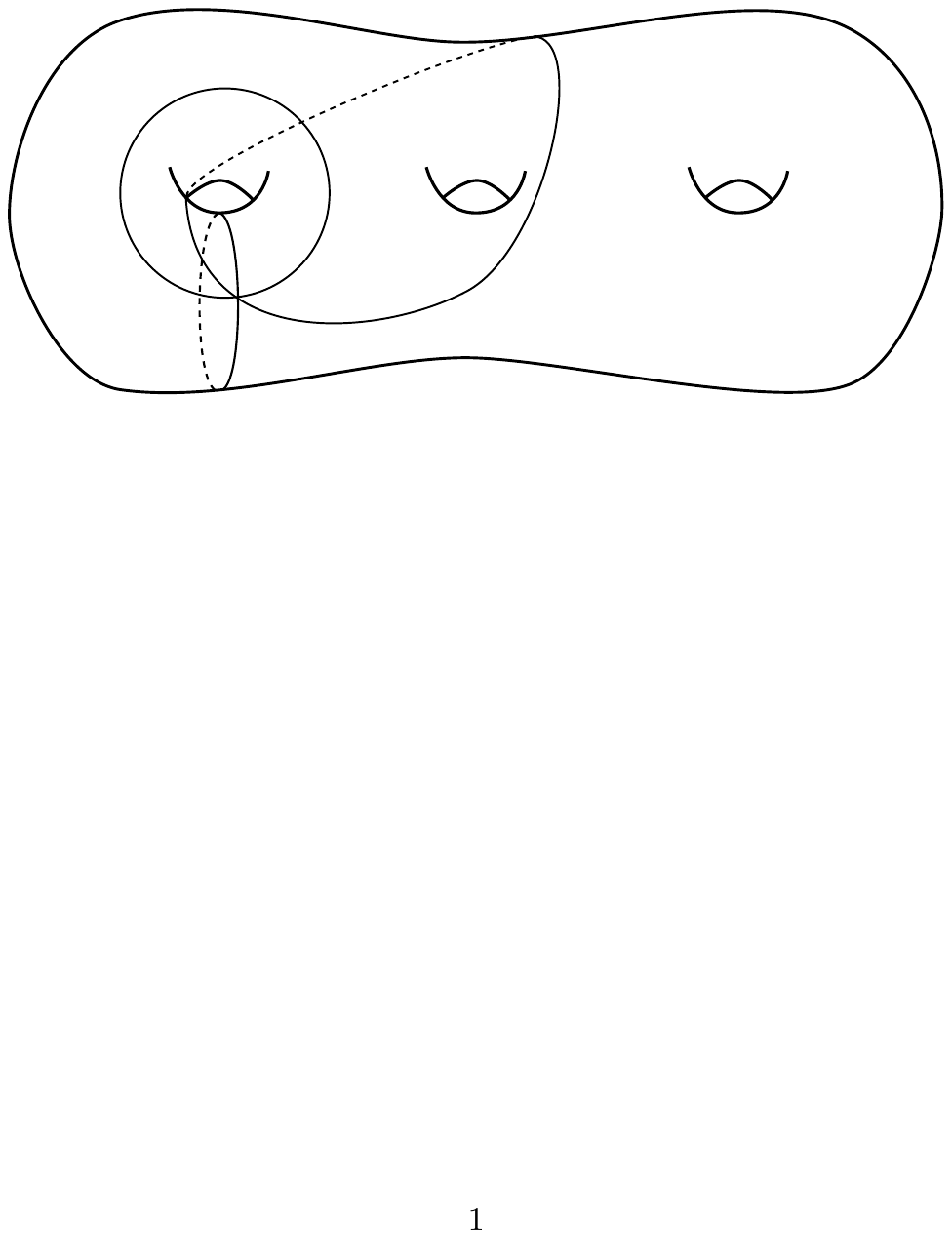}
\caption{An example of three curves forming an empty triangle in $\tc(S)$.}
\label{fig:empty-tri}
\end{figure}

Suppose $a,b$ are adjacent vertices in $\tc(S)$. We claim that if $i(a,b)=1$, there exists a punctured torus $T \subset S$ such that there are infinitely many empty triangles $a,b,c$ with $a,c$ contained in $T$, while if $i(a,b)\neq 1$, there are at most three empty triangles for every $T$. As this characterization concerns just the simplicial structure of $\tc(S)$, it proves that intersection number one is preserved by simplicial automorphisms.  So by Schmutz \cite{schmutzmapping},  any automorphism of $\tc(S)$ is induced by a mapping class.

Assuming $i(a,b)=1$, the empty triangles are easy to construct. Choose any simple closed curve $d$ in $S$ that intersects $a$ once, is disjoint from $b$, but is not homotopic into a regular neighborhood $N$ of $a \cup b.$ 
If $T$ is a regular neighborhood of $a \cup d$, then twisting $a$ around $d$ gives infinitely many simple closed curves $c$ in $T$ that lie in punctured tori with both $a$ and $b$. Any such $c$ determines an empty triangle with vertices $a,b,c$.

For the other direction, we will need the following result.

\begin{Prop}
\label{prop:intersection}
At most one edge of an empty triangle in $\tc(S)$ can connect simple closed curves on $S$ that intersect more than once.
\end{Prop}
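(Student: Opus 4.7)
The plan is to prove the contrapositive: assuming $i(a,b), i(a,c) \geq 2$ with $a,b,c$ pairwise adjacent in $\tc(S)$, I will show that the three curves lie in a common punctured torus.

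The key uniqueness principle I would establish first is that when $i(x,y) \geq 2$ and $x,y$ lie in a punctured torus $T$, the pair fills $T$: indeed, $\chi(N(x \cup y)) = -i(x,y) \leq -2$ while $\chi(T) = -1$, so the complement $T \smallsetminus N(x \cup y)$ has Euler characteristic at least one, forcing every component other than the annulus adjacent to $\partial T$ to be a disk. Thus $T$ is canonically reconstructed from $(x,y)$ as $N(x \cup y)$ together with its disk complementary regions in $S$, and so is unique. Applying this to $(a,b)$ and $(a,c)$ yields well-defined punctured tori $T_{ab}$ and $T_{ac}$.

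If $T_{ab} = T_{ac}$, then immediately $c \subset T_{ab}$ and we are done. Otherwise, I would first observe that $\partial T_{ab}$ and $\partial T_{ac}$ must meet essentially: disjoint separating curves both enclosing tori containing $a$ would force the tori to be nested, but nested punctured tori of equal Euler characteristic $-1$ are isotopic, contradicting $T_{ab} \neq T_{ac}$.

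The main obstacle is to complete the contradiction in the case $T_{ab} \neq T_{ac}$. I would exploit the third punctured torus $T_{bc}$, which contains $b$ and $c$ but, by emptiness of the triangle, does not contain $a$. The arcs of $a \cap T_{bc}$ have endpoints on $\partial T_{bc}$ and collectively realize $i(a,b) \geq 2$ intersections with $b$ and $i(a,c) \geq 2$ with $c$, since all intersections of $a$ with $b$ or $c$ occur inside $T_{bc}$. Using the structure of $T_{bc}$ as a once-punctured torus, with $b$ and $c$ as primitive classes in $H_1(T_{bc}) \cong \bz^2$, I would argue that such an arc system is incompatible with $a \not\subset T_{bc}$, possibly via an intersection-pairing argument in $H_1(T_{bc}, \partial T_{bc})$ or by analyzing the mutual positions of the three separating curves $\partial T_{ab}, \partial T_{ac}, \partial T_{bc}$ and the constraint that $b$ fills $T_{ab}$ with $a$ and $c$ fills $T_{ac}$ with $a$. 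The cleanest conclusion would be that such data forces $a$ to be isotopic into $T_{bc}$, yielding the desired contradiction. This final combinatorial step is the principal technical difficulty of the proof.
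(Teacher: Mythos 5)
Your contrapositive framing is correct, and the preliminary observations are essentially right: for non-isotopic non-separating curves $x,y$ lying in a once-punctured torus, that torus is unique up to isotopy (a regular neighborhood of $x\cup y$ with inessential complementary pieces filled in), so $T_{ab}$ and $T_{ac}$ are well-defined and you correctly reduce to the case $T_{ab}\neq T_{ac}$. (Your Euler-characteristic justification that $x,y$ fill $T$ is a bit loose --- total $\chi\geq 1$ of the complement does not by itself force each non-peripheral component to be a disk --- but the claim is true.) The real problem is that the argument then stops exactly where the content of the proposition lies. You write that you would rule out $a\not\subset T_{bc}$ ``possibly via an intersection-pairing argument in $H_1(T_{bc},\partial T_{bc})$ or by analyzing the mutual positions of the three separating curves,'' and you flag this as ``the principal technical difficulty.'' That step is not carried out, and the sketched homological pairing alone will not detect the position of the arcs of $a\cap T_{bc}$ relative to $\partial T_{bc}$, which is precisely what governs whether $a$ can be isotoped into the torus.

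The paper settles this by working in a different torus --- the one containing the high-intersection pair (your $T_{ab}$), not $T_{bc}$ --- and by first proving a surgery lemma (its Lemma \ref{lem:arcs}): if $x,y$ lie in a common torus $T$, then every arc of $y$ inside any other torus $T'\ni x$ meets $x$ at most once. With that in hand, Farey coordinates on $T_{ab}$ pin down the slope of $b$ and of each component of $c\cap T_{ab}$, reducing the question of extra components to the two cases $(m,n)=(\pm 2,\pm 1)$, which an orientation argument eliminates; a final parallel-arcs argument rules out multiple isotopic components. If you want to salvage your outline, the most direct route is to prove a version of Lemma \ref{lem:arcs} and then switch your base torus from $T_{bc}$ to $T_{ab}$, at which point the slope bookkeeping becomes tractable. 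As written, the proposal sets up the problem but does not prove it.
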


Deferring the proof for a moment, let's finish the characterization of pairs of vertices that intersect once.  Suppose $i(a,b)>1$.  If $a,b,c$ is an empty triangle, then by Proposition~\ref{prop:intersection}, we must have $i(a,c)=i(b,c)=1.$  But if $T $ is a punctured torus containing $a$, there are at most $3$ simple closed curves $c \subset T$ that intersect both $a,b$ once. 

This is easiest to see in coordinates.  After picking a basis for the homology of $T$, proper arcs and curves in $T$ can be labeled by extended rational numbers $\frac pq\in \mathbb Q \cup \infty$. Here, the $\frac pq$-arc is the unique arc that is disjoint from the $\frac pq$-curve.  See Figure \ref{fig:combined}\subref{fig:torus}. 
The intersection number of the $\frac pq$-curve and the $\frac mn $-curve is $|pn-mn|$, and the same formula holds for intersections of arcs and curves (although for intersections of two arcs it is off by one).
So, assume $a$ is the $\frac 10$-curve, and that some component of $b \cap T_3$ is the $\frac pq$-arc. As $i(a,b)>0$, we can assume that this arc intersects $a$, i.e.\ that $q\neq 0$.  Then if the $\frac mn$-curve in $T$ intersects $a$ and $b$ once, it intersects the $\frac pq $-arc at most once, so
$$ |1\cdot n-0\cdot m | = 1,  \ \ |p\cdot n-q\cdot m| \leq 1.$$
These conditions are only satisfied when $q=\pm 1$, in which case $\frac mn$ must be $\frac pq$ or $\frac {p\pm 1}q$.

\subsection {The proof of Proposition \ref {prop:intersection}}

We require the following lemma.

\begin{figure}[t!]
\centering
\includegraphics{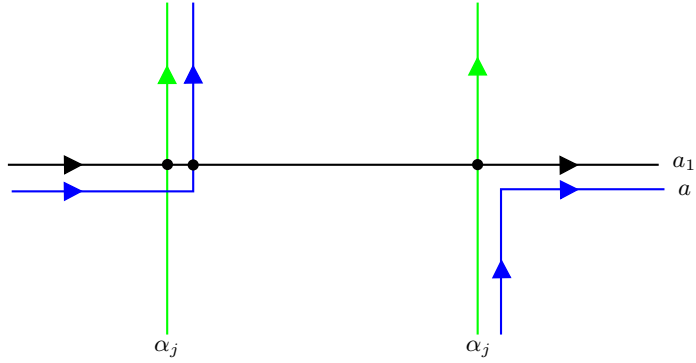}
\caption{Shown here are two consecutive intersections between $a_1$ (black) and $\al_j$ (green) and the  curve $a$ (blue) resulting from the following surgery:  Begin at the intersection on the left, follow $\al_j$ to the second intersection, and then following $a_1$ back to the first intersection.  Notice that $i(a_1,a) = 1$.}
\label{fig:surgery}
\end{figure}

\begin{Lem}
\label{lem:arcs}
If $a_1, a_2\in \tc(S)$ are contained in a punctured torus $T \subset S$, then for any punctured torus $T'\neq T$ in $S$ containing $a_1$, 
$$a_2 \cap T' = \al_1 \sqcup \cdots \sqcup \al_n$$
where $\al_j$ is a simple proper arc of $T'$ satisfying $i(a_1, \al_j) \leq 1$ for each $1\leq j\leq n$.
\end{Lem}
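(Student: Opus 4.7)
The plan is to place $a_1$, $a_2$, and $\partial T'$ in pairwise minimal position and apply the surgery of Figure~\ref{fig:surgery} to any component of $a_2 \cap T'$ meeting $a_1$ in two or more points; the surgery produces a simple closed curve in $T \cap T'$ that forces $T \simeq T'$ and thereby contradicts the hypothesis.

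Since $a_1 \subset T'$ gives $a_1 \cap \partial T' = \emptyset$, I would first isotope $a_2$ so that $|a_2 \cap (a_1 \cup \partial T')|$ is minimal; this achieves $|a_1 \cap a_2| = i(a_1, a_2)$ and $|a_2 \cap \partial T'| = i(a_2, \partial T')$ simultaneously by the bigon criterion applied to the multicurve $a_1 \cup \partial T'$. Each component of $a_2 \cap T'$ is then a simple proper arc $\alpha_j$ or, if $a_2 \subset T'$, the single closed curve $a_2$.

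Next, suppose toward a contradiction that some component $c$ satisfies $|c \cap a_1| \geq 2$. Choose two consecutive intersection points $p_1, p_2$ of $c$ with $a_1$ along $c$ and perform the surgery of Figure~\ref{fig:surgery}: the sub-arc $\beta \subset c$ between $p_1$ and $p_2$ together with a suitable sub-arc $\gamma \subset a_1$ joining them give a simple closed curve $a = \beta \cup \gamma$ with $i(a_1, a) = 1$, as indicated in the figure. Since $\beta \subset a_2 \subset T$ and $\gamma \subset a_1 \subset T$, we have $a \subset T$; by construction $a \subset T'$ as well. Thus $a$ and $a_1$ meet once inside each of the punctured tori $T$ and $T'$, so $N(a_1 \cup a)$ is a punctured torus of Euler characteristic $-1$ whose complement in each of $T, T'$ has $\chi = 0$. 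Using that no closed subsurface embeds in a surface with boundary and that $\partial T$ (resp.\ $\partial T'$) is essential, one checks that each such complement is a single annulus, so $T \simeq N(a_1 \cup a) \simeq T'$, contradicting $T \neq T'$.

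Consequently every component of $a_2 \cap T'$ meets $a_1$ at most once. The closed-curve case is simultaneously excluded: if $|a_2 \cap a_1| = 0$ then $a_1, a_2$ would be isotopic in the punctured torus $T$ (and hence in $S$), contradicting that $a_1 \neq a_2$; and if $|a_2 \cap a_1| = 1$ with $a_2 \subset T'$, then $T = N(a_1 \cup a_2) = T'$. So every component is a simple proper arc $\alpha_j$ with $i(a_1, \alpha_j) \leq |\alpha_j \cap a_1| \leq 1$, as required. The main obstacle I foresee is confirming the figure's key assertion that $i(a_1, a) = 1$, which rests on a careful local picture of the surgery at the two consecutive intersections $p_1, p_2$.
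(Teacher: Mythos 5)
Your proof follows the paper's approach exactly: apply the Figure~\ref{fig:surgery} surgery to a component of $a_2 \cap T'$ meeting $a_1$ twice, conclude the resulting curve $a$ lies in both $T$ and $T'$, and derive the contradiction $T=T'$; you also explicitly rule out the closed-curve case $a_2 \subset T'$, which the printed proof leaves implicit. As for the obstacle you flag at the end, the reason $i(a_1,a)=1$ is that $a_1$ and $a_2$ are simple closed curves in the punctured torus $T$, so all of their intersections have the same sign; hence any subarc $\beta$ of $a_2$ between two consecutive intersections with $a_1$ departs from one side of $a_1$ and arrives on the other, and closing $\beta$ up with an arc $\gamma \subset a_1$ produces a curve crossing $a_1$ exactly once.
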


\begin{proof}
Suppose $i(a_1, \al_j) \geq 2$ for some $1\leq j \leq n$, then we may perform the surgery shown and described in Figure \ref{fig:surgery}.  The resulting curve $a$ satisfies $i(a_1,a) = 1$ implying it is in minimal position with respect to $a_1$ allowing us to conclude that $a\neq a_1$.  As this surgery occurred in $T'$ it is clear that $a \subset T'$.  Furthermore, as $a$ is obtained from a surgery on $a_1$ and $a_2$, $a\subset T$.  Now $T$ and $T'$ share two simple closed curves implying $T= T'$, which is a contradiction.
\end{proof}

\begin{figure}[t]%
	\centering
	\subfloat[\label{fig:torus}]{{
		\includegraphics{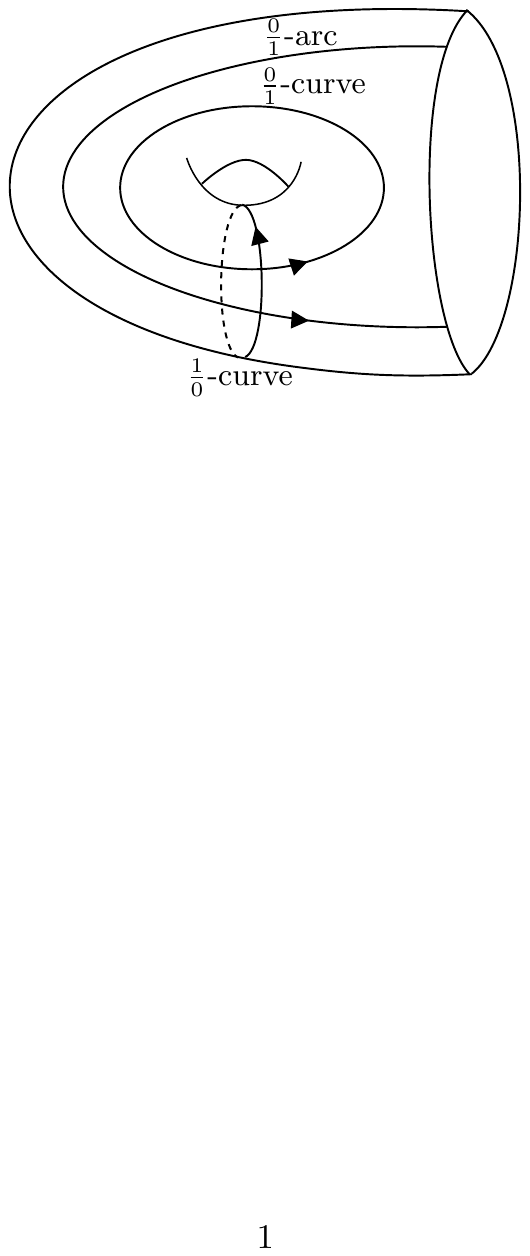}}}%
	\qquad\quad
	\subfloat[\label{fig:orientation}]{{
		\includegraphics{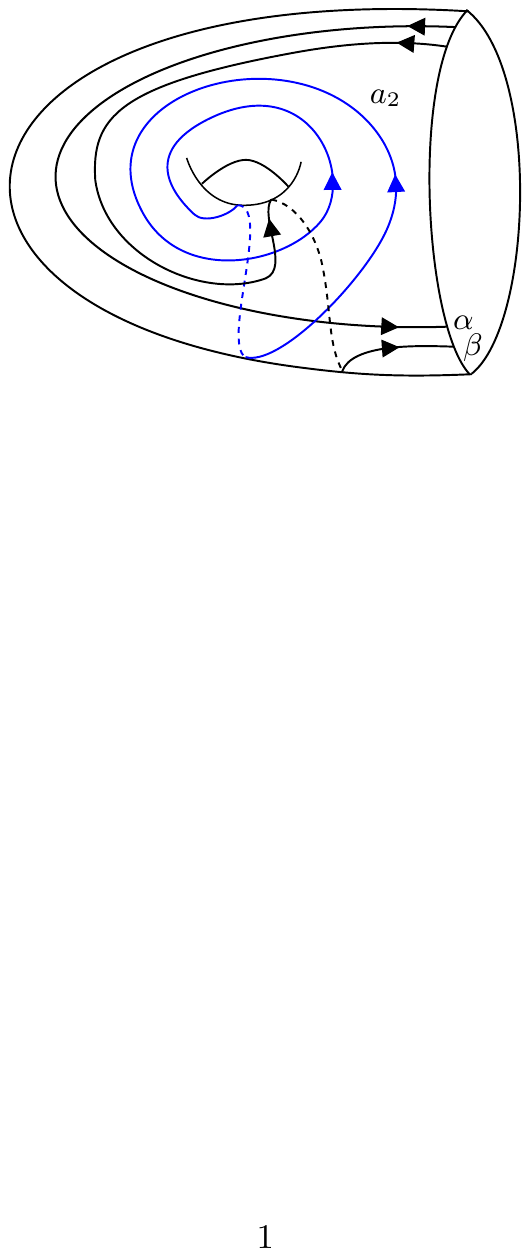}}}%
		
	\subfloat[\label{fig:components}]{{
		\includegraphics{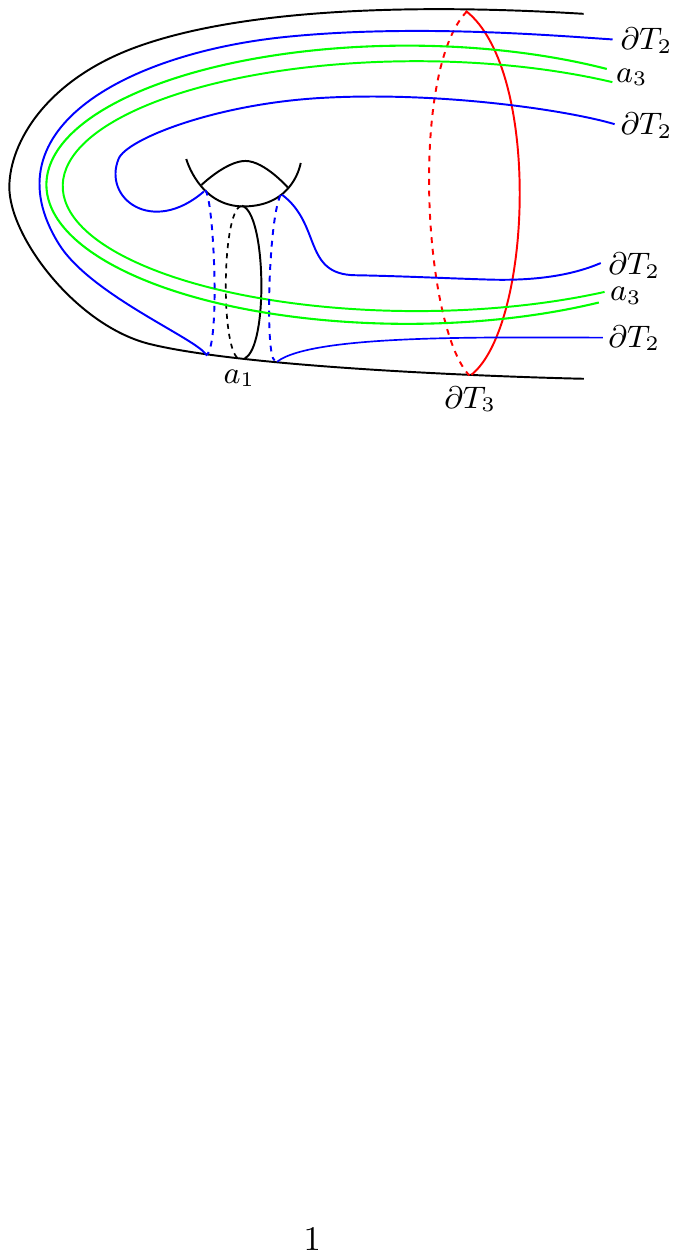}}}%
	\caption{(a) A reference torus. 
	(b) Drawn here is $a_2, \al,$ and $\be$ with $a_2$ given an arbitrary orientation.  As $a_1$ and $a_3$ live in a torus, the orientations of the intersections of $\al$ and $\be$ with the $\frac10$-curve must agree.
	(c) The region containing $a_1$ that is bounded by the blue and red curves is the intersection $T_2 \cap T_3$.}
	
	\label{fig:combined}
\end{figure}

\begin{proof}[Proof of Proposition \ref{prop:intersection}]
Label the vertices of the empty triangle as $a_1, a_2, a_3\in \tc(S)$. We will show that if $i(a_1,a_2)>1$, then $i(a_1,a_3) = 1$.  

Let $T_i\subset S$ be the punctured torus containing $a_j$ and $a_k$ for $i\neq j\neq k \in \{1,2,3\}$. Then in light of Lemma \ref{lem:arcs}, it is enough to show that $a_3\cap T_3$ has a single component.

As above, we will work in coordinates, labeleling arcs and curves in $T_3$  by  extended rational numbers $\frac pq\in \mathbb Q \cup \infty$. 
We will assume $a_1$ is the $\frac10$-curve in $T_3$.  Since $i(a_1,a_3) > 0$, Lemma \ref{lem:arcs} guarantees that there exists a component $\al$ of $a_3\cap T_3$ that intersects $a_1$ exactly once; without loss of generality, we may assume $\al$ is the $\frac01$-arc.
As $i\left(a_1, a_2\right) \neq 1$, we know $a_2$ is not the $\frac01$-curve; in particular, $i\left(a_2, \al\right) = 1$  by Lemma \ref{lem:arcs} as $a_2, a_3 \subset T_1$.  This forces $a_2$ to be the $\frac1m$-curve for some $m\in \bz$ with $|m|\geq 2$. 

We now want to rule out the existence of other components $\be$ of $a_3\cap T_3$. Any such $\be$ intersects both $a_1$ and $a_2$ once.  For if $i(a_1, \be) = 0$, then $\be$ is the $\frac10$-arc, implying $i(a_2, \be) > 1$, contradicting Lemma \ref{lem:arcs}.  So, we can conclude $i(a_1,\be) = 1$, again by Lemma \ref{lem:arcs}.  Similarly, we have $i(a_2, \be) = 1$.

Suppose first that $\be$ is not isotopic to $\al$.  Then as $i(a_1, \be) = 1$, we know $\be$ is the $\frac n1$-arc for some $n\neq 0 \in \bz$. We now have two integers $m,n$ satisfying 
$$|m\cdot n -1| = 1,$$
as $i(a_2,\al) = 1$.  
Since $n\neq 0$, 
$$(m,n) \in \{(2,1), (-2,-1)\}$$
since $|m|\geq 2$.

Consider the case $(m,n) = (2,1)$, where $a_2$ is the $\frac12$-curve and $\be$ is the $\frac11$-arc, see  Figure~\ref{fig:combined}\subref{fig:orientation}. Choose orientations for $a_1, a_2$ and $a_3$.  As $a_1$ and $a_3$ live in a punctured torus, we know that the orientations of the intersections of $\al$ and $\be$ with $a_1$ must agree (as in Figure \ref{fig:combined}\subref{fig:orientation}).  However, this forces the orientations of the intersections of $\al$ and $\be$ with $a_2$ to disagree, which contradicts $a_2$ and $a_3$ being contained in the punctured torus $T_1$.  A similar argument implies that $(m,n) \neq (-2,-1)$.

We have now shown that all the components of $a_3\cap T_3$ are isotopic to the $\frac 01$-arc in $T_3$.  When $T_2$ and $T_3$ are put in minimal position, the intersection $ T_2\cap T_3$ must then be exactly as shown in Figure \ref{fig:combined}\subref{fig:components}, since any component of $T_2 \cap T_3$ must contains some component of $a_3 \cap T_3$.  Then $R=T_2 \cap (S\smallsetminus T_3)$ is a rectangle, and $a_3 \cap R$ is a collection of parallel arcs. Since $S$ is orientable, the parallel arcs in $a_3 \cap T_3$ and $a_3 \cap R$ glue to a collection of parallel loops. But $a_3$ is supposed to be a simple closed curve, so $a_3 \cap T_3$ must have a single component intersecting $a_1$ once.
\end{proof}

\section{Automorphisms as mapping classes}

\label{sec:proof}
In this section we complete the proof of Theorem \ref{main}, that is to say that the natural homomorphism $\Mod(S) \to \Aut(\cb(S))$ is a surjection.

Let $f$ be an automorphism of $\cb(S)$.  By Proposition \ref{prop:small-invariant}, $f$ permutes the small compression bodies $S[a]\in \cb(S)$, so the formula $f(S[a])=S[f_*(a)]$ defines a map
$$f_* : \{\text {simple closed curves on } S\} \longrightarrow \{\text {simple closed curves on } S\}.$$
Moreover, Proposition \ref{prop:small-invariant} says that $a$ is non-separating if and only if $f_*(a)$ is.
Since a non-separating curve $a \subset S$ is contained in a punctured torus $T\subset S$ if and only if $S[\partial T] \subset S[a] $, $f_*$ preserves when a collection of non-separating curves is contained in a punctured torus. So, $f_*$ extends to an automorphism of the torus complex $\tc(S)$.  As every automorphisms of $\tc(S)$ agrees with a mapping class (Theorem \ref{thm:torus-aut}), the action of $f$ on the set of non-separating small compression bodies agrees with a mapping class.

By post-composing $f$ with a mapping class, we obtain an automorphism of $\cb(S)$ fixing all non-separating small compression bodies.  We claim:

\begin {Prop}\label {fixnonsep}
The only automorphism of $\cb(S)$ that fixes all non-separating small compression bodies is the identity.
\end {Prop}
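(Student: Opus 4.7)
I will bootstrap the hypothesis in two stages: first that $f$ fixes every $C \in \cb(S)$ with a non-separating meridian, then that $f$ fixes every separating small compression body. Together with the hypothesis, these give that $f$ fixes every small compression body, and by Corollary \ref{disccont} every $C$ is determined by $\cd(C) = \{a : S[a] \subset C\}$, so $f$ is the identity.

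For the first stage, Proposition \ref{prop:preservation} and Lemma \ref{lem:sepheight} show that $f$ preserves heights (hence inclusions) and the property of having a non-separating meridian. For any non-separating simple closed curve $a$, $a \in \cd(C) \iff S[a] \subset C \iff S[a] = f(S[a]) \subset f(C) \iff a \in \cd(f(C))$, so $C$ and $f(C)$ have the same non-separating meridians. If $C$ has a non-separating meridian, Corollary \ref{cor:nonsep} gives a compressing system $a_1,\ldots,a_m$ for $C$ consisting of non-separating curves, and Lemma \ref{compressdiscs} then yields $C = S[a_1,\ldots,a_m] \subset f(C)$; the symmetric argument gives equality.

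For the second stage, write $f(S[c]) = S[c']$ with $c'$ separating; the goal is $c' = c$. For any non-separating $b$ disjoint from $c$, the compression body $S[c,b]$ has non-separating meridian $b$, is fixed by the first stage, and contains $S[c]$, so $c' \in \cd(S[c,b])$. If $c'$ is disjoint from $c$ and lies on one side of the splitting $S = S_1 \cup_c S_2$, say $S_1$, then choosing $b \subset S_2$ non-separating places $c'$ in the first factor of $\pi_1(S[c,b]) \cong \pi_1(\bar S_1) \ast \pi_1(\bar S_2[b])$, where $\bar S_i$ denotes $S_i$ capped along $c$ by a disk. Since $c' \neq c$, the curve $c'$ is essential in $\bar S_1$ and hence nontrivial in this free product, contradicting $c' \in \cd(S[c,b])$.

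The main obstacle is the remaining case, when $c'$ intersects $c$, which I will handle with a word-reduction argument. Place $c,c'$ in minimal position with $2k$ intersections and write $c' = w_1 v_1 \cdots w_k v_k$ in $\pi_1(S[c]) \cong \pi_1(\bar S_1) \ast \pi_1(\bar S_2)$, where each letter is the conjugacy class of the simple closed curve in $\bar S_i$ obtained by completing an arc of $c' \cap S_i$ with an arc in the compressing disk for $c$. An innermost-disk argument inside $\bar S_1$ will show that minimal position forces each $w_i$ to be nontrivial: any disk in $\bar S_1$ bounded by $w_i$, after minimizing intersections with $c$, must meet $c$ in a single arc, which splits it into two sub-disks that exhibit the corresponding arc of $c' \cap S_1$ as boundary-parallel, contradicting minimality. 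Then for every non-separating $b \subset S_1$, triviality of $c'$ in $\pi_1(\bar S_1[b]) \ast \pi_1(\bar S_2)$, together with the cyclically reduced alternating form of the word, forces each $w_i$ to lie in $\langle\langle b \rangle\rangle$, i.e.\ $w_i \in \cd(\bar S_1[b])$. By Proposition \ref{smalldisc} applied to the simple closed curve $w_i$, it must equal $b$ or bound a punctured torus in $\bar S_1$ containing $b$; since $w_i$ is fixed while $b$ ranges over all non-separating curves in $\bar S_1$, neither alternative can hold for every $b$ (a non-separating $w_i$ fails for any $b \neq w_i$, while a separating $w_i$ bounding a punctured torus $T$ fails for any non-separating $b \subset \bar S_1 \smallsetminus T$, which exists when $g(\bar S_1) \geq 2$; and when $g(\bar S_1)=1$ any separating $w_i$ is already null-homotopic). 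Hence $w_i = 1$, contradicting nontriviality, so $c'$ cannot meet $c$ and therefore $c' = c$.
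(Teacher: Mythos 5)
Your plan matches the paper's overall skeleton exactly: Stage 1 is identical to the paper's Lemma \ref{lem:nonsep-fixed}, and the final step via disk sets is the same. The genuinely different part is Stage 2. The paper handles it with a subsurface-projection argument: given distinct separating $S[a]\neq S[b]$, it picks a curve $c$ in an interior boundary component $F$ of $S[a]$ with $d_{\cc(F)}(c,\pi_F(b))\geq 2$, glues $F[c]$ onto $S[a]$ to make a compression body $C$ with a non-separating meridian, and uses Lemma \ref{surjdisc} together with Lemma \ref{smalldisc} to see $S[b]\not\subset C$. You instead try a word-combinatorics argument in the free product $\pi_1(\bar S_1[b])\ast\pi_1(\bar S_2)$. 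Your Case 1 ($c'$ disjoint from $c$) is correct, and your argument that each $w_i\neq 1$ in $\pi_1(\bar S_1)$ is correct (a properly embedded essential arc in $S_1$ caps off to an essential curve in the closed surface $\bar S_1$; otherwise one can use the disk it bounds to show either $\alpha$ is boundary-parallel or $S_1$ is a disk).

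However, the key step of Case 2 has a genuine gap. You assert that triviality of the cyclically reduced alternating word $w_1 v_1\cdots w_k v_k$ in $\pi_1(\bar S_1[b])\ast\pi_1(\bar S_2)$ forces \emph{each} $w_i$ to lie in $\langle\langle b\rangle\rangle$. That is not what triviality in a free product gives you; it only forces a reduction pattern, and the reduction can proceed by adjacent $v_i$ letters cancelling and then adjacent $w_i$ letters merging into a product that becomes trivial without either factor doing so. Concretely, with $G=\pi_1(\bar S_1)$, $\bar G=G/\langle\langle b\rangle\rangle$, $H=\pi_1(\bar S_2)$, take $w_1,w_3\in\langle\langle b\rangle\rangle\setminus\{1\}$, $\bar w_2 = a\neq 1$, $\bar w_4=a^{-1}$, and $v_1=h$, $v_2=h^{-1}$, $v_3=h$, $v_4=h^{-1}$ for some $h\neq 1$. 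Then
\[
\bar w_1 v_1 \bar w_2 v_2 \bar w_3 v_3 \bar w_4 v_4 = h\,a\,h^{-1}\cdot h\,a^{-1}\,h^{-1} = 1
\]
in $\bar G\ast H$, even though $\bar w_2,\bar w_4\neq 1$. So being a meridian of $S[c,b]$ does not imply $w_i\in\cd(\bar S_1[b])$ for every $i$, and the ``vary $b$'' argument that follows (and the appeal to Proposition \ref{smalldisc}) does not get off the ground. You would need a substantially different analysis of how a meridian of $S[c,b]$ can intersect $c$, which is in effect what the paper's interior boundary projection together with the innermost-disk surgery of Lemma \ref{innermost} supplies.
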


This will imply that our $f$ above agrees with a mapping class, and will finish the proof of Theorem \ref{main}.
To prove Proposition \ref{fixnonsep}, we must set up some terminology.

Let $\Sigma$ be an orientable finite-type surface, possibly with boundary.  The \emph{curve graph} $\cc(\Sigma)$ is the graph whose vertices are isotopy classes of essential non-peripheral simple closed curves in $\Sigma$ and where edges connect pairs of simple closed curves that intersect minimally. (Note that unless $S$ is a torus, a punctured torus or a $4$-holed sphere, there are pairs of disjoint curves on $S$, so then `intersect minimally' means disjoint.)  The curve graph $\cc(\Sigma)$ has a natural metric $d_\cc$, determined by setting each edge to have length one.  Masur-Minksy \cite[Proposition 4.6]{Masurgeometry1} have shown that $\cc(\Sigma)$ has infinite diameter.

Let $F$ be an interior boundary component of a separating small compression body $S[a]$ and let $D_a$ be a properly embedded disk bounded by $a$. Let $\Sigma_F \subset S$ be the component of $S\smallsetminus a$ such that $\Sigma_F \cup D_a $ is isotopic to $F$ within $S[a]$. Note that as these surfaces are incompressible in $S[a]$, the isotopy gives a canonical identification $$\cc(\Sigma_F \cup D_a ) \overset{\cong}{\longrightarrow} \cc(F).$$

The \textit{interior boundary projection} from $S$ to $F$ is the multi-valued function $$\pi_F: \cc(S) \longrightarrow \cc(\Sigma_F \cup D_a ) \cong \cc(F)$$ defined as follows: let $b\in\cc(S)$ and assume $b$ is in minimal position with $a$.  Now,
\begin{itemize}
\item[(i)] 
if $b\in\cc(\Sigma_F)\subset \cc(S)$, let $\pi_F(b)=b$.
\item[(ii)]
if $b \cap \partial \Sigma_F$ is nonempty, then for each arc $\be$ of $b \cap \Sigma_F$, let $\be_1, \be_2\in \cc(\Sigma_F)$ be the two components of the boundary of a regular neighborhood of $\be \cup \partial \Sigma_F$.
Then,
$$\pi_F(b) = \bigcup_{\beta \subset b \cap \Sigma_F} \left \{\be_1, \be_2\right\}.$$
\item[(iii)]
otherwise, $\pi_F(b) = \emptyset$.
\end{itemize}

For the familiar, this is the same as the \emph {subsurface projection} from $S$ to $\Sigma_F$, except that at the end we cap off the boundary of $\Sigma_F$ with a disk.  The only fact we will need is:

\begin {Lem}\label {surjdisc}
If $S[a],S[b]$ are both contained in an $S$-compression body $ C$, there is component $F\subset\partial_-S[a]$ and an element $m \in \pi_F(b)$ that bounds a disk in $C \smallsetminus S[a]$.
\end {Lem}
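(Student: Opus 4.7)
The plan is to prove Lemma \ref{surjdisc} by a standard outermost-arc surgery combined with the incompressibility of the interior boundary surface $F \subset \partial_- S[a]$ in $S[a]$. Since the interior boundary projection $\pi_F$ was defined only when $S[a]$ is a separating small compression body, I assume throughout that $a$ is separating, so that $\Sigma_{F^+}, \Sigma_{F^-}$ are the two components of $S \setminus a$ and $\cd(S[a]) = \{a\}$ by Proposition \ref{smalldisc}. The case $b = a$ is trivial, so I assume $b \neq a$, and hence $b \notin \cd(S[a])$. Choose properly embedded meridian disks $D_a \subset S[a]$ and $D_b \subset C$ with boundaries $a$ and $b$, put $a, b$ in minimal position on $S$ and $D_a, D_b$ in general position, and minimize $|D_a \cap D_b|$ via standard innermost-disk moves and the irreducibility of $C$, so that $D_a \cap D_b$ consists of a (possibly empty) collection of arcs with endpoints on $a \cap b$.

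If $D_a \cap D_b = \emptyset$, then $i(a,b) = 0$ and $b$ lies in a single component $\Sigma_F$, so $b$ itself is an element of $\pi_F(b)$ via case (i) of the definition. Isotoping $b$ from $\Sigma_F \subset S$ into $F$ through a product collar of $\Sigma_F \cup D_a$ in $S[a]$ produces an annulus $A \subset S[a]$; then $D_b \cup_b A$ is a disk in $C$ whose boundary is the image of $b$ on $F$. Since $b \notin \cd(S[a])$, this image is essential in $F$, and minimizing the disk's intersections with $F$ while invoking the incompressibility of $F$ in $S[a]$ forces the disk to lie entirely in the component of $C \setminus S[a]$ glued along $F$, yielding the desired meridian.

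Otherwise, choose an arc $\alpha$ of $D_a \cap D_b$ that is outermost on $D_b$: it bounds a subdisk $\Delta \subset D_b$ with $\Delta \cap D_a = \alpha$ and $\partial \Delta = \alpha \cup \alpha_b$ for some subarc $\alpha_b \subset b$. Outermostness combined with minimality forces the interior of $\alpha_b$ to miss $a$, so $\alpha_b$ is a single arc of $b \cap \Sigma_F$ for some component $F$, with both endpoints on $a = \partial \Sigma_F$. Cutting $D_a$ along $\alpha$ into subdisks $D_a^1, D_a^2$ (and $a$ into subarcs $\alpha_a^1, \alpha_a^2$), each surgered disk $\Delta \cup D_a^i$ is a properly embedded disk in $C$ bounded by the simple closed curve $c_i = \alpha_b \cup \alpha_a^i$ in $S$; a slight push of $\alpha_a^i$ off $a$ into $\Sigma_F$ identifies $c_1, c_2$ with the two boundary curves of a regular neighborhood of $\alpha_b \cup \partial \Sigma_F$ in $\Sigma_F$, placing them in $\pi_F(b)$ via case (ii). The same annulus concatenation and minimization-under-incompressibility argument as in the previous case -- isotoping $c_i$ through $S[a]$ onto $F$ and then minimizing intersections with $F$ -- produces a disk for the image of $c_i$ on $F$ inside the component of $C \setminus S[a]$ meeting $F$, as desired. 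The main technical point I expect is this final incompressibility-based minimization, which must be executed carefully via an innermost-disk analysis to push the constructed disks entirely into the correct piece of $C \setminus S[a]$; a secondary piece of bookkeeping is verifying that the surgered curves correspond exactly to the boundary curves of the regular neighborhood appearing in case (ii) of the definition of $\pi_F$.
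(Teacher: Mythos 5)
Your core move --- an outermost-arc surgery on $D_b$ against $D_a$ producing simple closed curves on $\Sigma_F$ that bound disks in $C$ --- is exactly what the paper does (it just cites its Lemma~\ref{innermost}(2) rather than re-deriving it), and your identification of the surgered curves $c_1,c_2$ with the two regular-neighborhood boundary curves in case (ii) of the definition of $\pi_F$ is correct.

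The place that does not go through as written is the step you yourself flag as the ``main technical point'': isotoping $c_i$ to a curve $\bar c_i$ on $F$ and then ``minimizing intersections with $F$ while invoking incompressibility of $F$ in $S[a]$'' does not by itself force the disk into $C\smallsetminus S[a]$. After minimizing $|D\cap F|$, incompressibility of $F$ in $S[a]$ only lets you discard innermost circles whose subdisks lie on the $S[a]$ side; an innermost circle whose subdisk lies in $C\smallsetminus S[a]$ may be essential in $F$ and is then not removable, and it need not be isotopic to $\bar c_i$ (so it need not belong to $\pi_F(b)$). To push $D$ entirely to one side of $F$ you would additionally need that a $\pi_1$-injective annulus in $S[a]$ with both boundary components on $F$ is boundary-parallel, which is an extra (true but nontrivial) structural fact about compression bodies that your sketch doesn't supply. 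There is a shorter route that avoids $F$ altogether: after the surgery, push $D_a^i$ slightly off $D_a$, so the surgered disk $\Delta_i'$ is disjoint from $D_a$ and $\partial\Delta_i' = c_i\subset\Sigma_F$. Cutting $C$ along $D_a$, the component whose boundary contains $\Sigma_F$ is $\Sigma_F\times[0,1]$ glued along $F$ to the $F$-compression body $C\smallsetminus S[a]$ from Corollary~\ref{sum}, i.e.\ it \emph{is} $C\smallsetminus S[a]$ up to product, and $\Delta_i'$ lies in it. Equivalently, $c_i$ is a meridian of $C$ disjoint from $a$, so $S[a,c_i]\subset C$ and $S[a,c_i]\smallsetminus S[a]=F[\bar c_i]\subset C\smallsetminus S[a]$ gives the disk directly; the same reasoning handles your $i(a,b)=0$ case with $D_b$ in place of $\Delta_i'$.
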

\begin {proof}
The surgery of $b$ and $\partial \Sigma_F$ presented in Lemma \ref{innermost}(2) gives a meridian for $C$ that is contained in $\Sigma_F$, and isotoping this to $F$ gives an element of $\pi_F(b)$.
\end {proof}

\begin {proof}[Proof of Proposition \ref {fixnonsep}]
The proof will proceed in three stages: first, we show that our automorphism $f$ fixes all compression bodies that contain a non-separating meridian, then we show that this implies that $f$ fixes all small compression bodies (including the separating ones), and then we show that $f$ is the identity.

\begin{Lem}
\label{lem:nonsep-fixed}
An automorphism of $\cb(S)$ fixing every non-separating small compression body fixes every compression body containing a non-separating meridian.
\end{Lem}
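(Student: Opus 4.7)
The plan is to combine two ingredients: first, any automorphism of $\cb(S)$ preserves containment, which comes directly from the work already done in this section; and second, a compression body containing a non-separating meridian is determined, via Corollary \ref{cor:nonsep} and Lemma \ref{compressdiscs}, by the non-separating small compression bodies it contains. For containment preservation, I would observe that the proof of Proposition \ref{prop:preservation} actually establishes case (i) of the dichotomy in Lemma \ref{iii}, since case (ii) is incompatible with height-preservation (it would force $\mathfrak{h}(f(C)) = 2g - \mathfrak{h}(C)$ for all $C$). Thus $f$ sends uplinks to uplinks and downlinks to downlinks, meaning $C \subset D$ implies $f(C) \subset f(D)$.

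Now let $C$ be a compression body containing a non-separating meridian. By Corollary \ref{cor:nonsep} we may write $C = S[a_1, \ldots, a_m]$ with each $a_i$ non-separating. For each $i$ we have $S[a_i] \subset C$, hence $S[a_i] = f(S[a_i]) \subset f(C)$ by containment-preservation and the hypothesis that $f$ fixes every non-separating small compression body. Therefore each $a_i$ is a meridian of $f(C)$, and Lemma \ref{compressdiscs} yields $C = S[a_1, \ldots, a_m] \subset f(C)$. Since $f^{-1}$ also fixes every non-separating small compression body, and $f(C)$ still contains a non-separating meridian (namely $a_1$, since $S[a_1] \subset f(C)$), the same argument applied to $f^{-1}$ and $f(C)$ gives $f(C) \subset f^{-1}(f(C)) = C$, hence $f(C) = C$.

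I do not anticipate any real obstacle here. Once the height-reversing alternative of Lemma \ref{iii} has been ruled out in Proposition \ref{prop:preservation}, the proof becomes a formal consequence of the characterization of compression bodies with a non-separating meridian in Corollary \ref{cor:nonsep}, combined with the disk-set criterion for containment from Lemma \ref{compressdiscs}. The only subtlety worth flagging is the symmetric application to $f^{-1}$, which works precisely because containing a non-separating meridian is preserved by $f$ (we get $S[a_1] \subset f(C)$ for free from the forward direction).
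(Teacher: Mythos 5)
Your proof is correct and follows essentially the same strategy as the paper: express $C = S[a_1,\ldots,a_m]$ with each $a_i$ non-separating via Corollary~\ref{cor:nonsep}, deduce $S[a_i]\subset f(C)$ from the hypothesis and containment-preservation, and conclude $C\subset f(C)$. The only difference is the final step, where the paper invokes $\mathfrak{h}(C)=\mathfrak{h}(f(C))$ directly to force equality, while you apply the same argument symmetrically to $f^{-1}$ — both are valid, and your discussion of Lemma~\ref{iii} case (ii) is more than is needed (the paper simply notes in the proof of Lemma~\ref{lem:sepheight} that adjacency plus height already determines the direction of containment).
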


\begin{proof}
If $C$ is an $S$-compression body containing a non-separating meridian, then $C = S[a_1, \ldots, a_m]$, where each $a_i$ is non-separating (Corollary \ref{cor:nonsep}).  If $f\in \Aut(\cb(S))$ fixes every non-separating small compression body, then $S[a_i] \subset f(C)$ for $1\leq i \leq m$.  In particular, $C\subset f(C)$, but $\mathfrak{h}(C) = \mathfrak{h}(f(C))$ (Proposition \ref{prop:preservation}) forcing $C = f(C)$. 
\end{proof}

\begin{Lem}
An automorphism of $\cb(S)$ that fixes every compression body that contains a non-separating meridian fixes every small compression body.
\end{Lem}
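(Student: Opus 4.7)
The plan is to treat the case of a separating small compression body $S[a]$, since a non-separating small compression body $S[b]$ contains the non-separating meridian $b$ and so is automatically fixed by hypothesis. By Proposition~\ref{prop:small-invariant}, any automorphism $f$ as in the statement satisfies $f(S[a]) = S[a']$ for some separating curve $a'$, and I would argue $a = a'$ by contradiction, assuming $a' \neq a$. Write $S = S_1 \cup_a S_2$, noting that $g(S_i) \geq 1$ for both $i$ since $a$ is essential on the closed surface $S$, and let $F_1, F_2$ denote the closed surfaces obtained by capping off the $S_i$ with disks, identified with the interior boundary components of $S[a]$.

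The key observation is that for any non-separating simple closed curve $c \subset S$ disjoint from $a$, the compression body $C = S[a,c]$ contains $c$ as a non-separating meridian and so is fixed by $f$ by hypothesis. Since $S[a] \subset C$, applying $f$ yields $S[a'] \subset C$, i.e.\ $a'$ is a meridian of $C$. If $c \subset S_i$ then $C \smallsetminus S[a]$ is attached entirely along the $F_i$-side of $S[a]$ and equals the small $F_i$-compression body $F_i[c']$, where $c'$ is the image of $c$ in $F_i$. Applying Lemma~\ref{surjdisc} to the containments $S[a], S[a'] \subset C$ then yields some $m \in \pi_{F_i}(a') \cap \cd(F_i[c'])$, so that this intersection is non-empty for every such $c$.

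The main step is to exhibit a choice of $c$ for which this intersection is empty. By Proposition~\ref{smalldisc}, every element of $\cd(F_i[c'])$ is either $c'$ itself or a band sum $B(c', b)$, each of which is disjoint from $c'$ on $F_i$; hence $\cd(F_i[c'])$ lies in the closed $1$-neighborhood of $c'$ in $\cc(F_i)$. If $\pi_{F_i}(a') = \emptyset$ for some $i$---which happens precisely when $a'$ is isotopic into the opposite side $S_j$---then any non-separating $c \subset S_i$ disjoint from $a$ already produces an empty intersection, a contradiction. Otherwise both $\pi_{F_1}(a')$ and $\pi_{F_2}(a')$ are nonempty finite sets, and I would invoke Masur-Minsky's theorem that $\cc(F_1)$ has infinite diameter to produce a non-separating $c' \in \cc(F_1)$ with $d_\cc(c', \pi_{F_1}(a')) \geq 2$; the corresponding $c \subset S_1$ is non-separating in $S$ and disjoint from $a$, and $\cd(F_1[c'])$ is then disjoint from $\pi_{F_1}(a')$, again a contradiction.

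The crux of the argument is this tension between the disk set of a small compression body, which by Proposition~\ref{smalldisc} is confined to a $1$-neighborhood of the compressing curve, and the projection $\pi_{F_i}(a')$, which can be pushed arbitrarily far by Masur-Minsky. The only real technicality I anticipate is that the far-away curve $c'$ must be taken non-separating in $F_1$, so that the corresponding $c \subset S_1$ is non-separating in $S$: when $g(F_1) = 1$ every essential curve on a torus is non-separating, and when $g(F_1) \geq 2$ the non-separating curves are $1$-dense in $\cc(F_1)$ (any separating curve has both complementary pieces of positive genus), so the infinite-diameter statement still produces arbitrarily far non-separating curves. Equally routine is the fact that non-separation of $c$ in $S_1$ coincides with non-separation of $c$ in $S$, which follows from the connectedness of $S_2$.
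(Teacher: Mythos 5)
Your proof is correct and takes essentially the same route as the paper: reduce to separating small compression bodies, construct $C = S[a,c]$ with $c$ non-separating and far (in the curve graph of an interior boundary component, via Masur--Minsky) from the interior boundary projection of the other compressing curve, and use Lemma~\ref{surjdisc} together with Proposition~\ref{smalldisc} to derive a contradiction. Your additional remarks on the density of non-separating curves in $\cc(F_i)$ and the case $\pi_{F_i}(a')=\emptyset$ are details the paper treats implicitly but are handled correctly in your write-up.
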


\begin {proof}
By Proposition \ref {prop:small-invariant}, the set of small compression bodies is invariant, so we must only show that separating small compression bodies are not nontrivially permuted.  So, we claim that if $S[a]$ and $S[b]$ are distinct separating small compression bodies, there is a compression body $C$ that has a non-separating meridian such that
$$S[a] \subset C,\ \text{ but } \ S[b] \not\subset C.$$
This will prove the lemma, since if $f(C)=C$ then we cannot have $f(S[a])=S[b]$.

 From the definition, it is easy to see that $\pi_F(b)$ has finite diameter in $\cc(F)$, so we can choose a non-separating curve $c\in \cc(F)$ satisfying
 \begin{equation}
\label{eq:diam}
d_\cc(c, \pi_F(b)) \geq 2.
\end{equation}
Let $C$ be obtained by gluing $F[c]$ to $S[a]$ (see Corollary \ref{sum}). If $S[b]\subset C$, Lemma~\ref{surjdisc} gives some $m\in \pi_F(b)$ that bounds a disc in $C\smallsetminus S[a]=F[c]$.  (Note: the $m$ given must lie on $F$, since the other component of $\partial_- S[a]$ is incompressible in $C$.) But by Lemma \ref{smalldisc}, every meridian in $F[c]$ is disjoint from $c$, so this would violate \eqref{eq:diam}.\end{proof}

Finally, recall that the \emph {disk set} of a compression body $C$, denoted $\cd(C)$, is the collection of all (isotopy classes of) meridians in $C$.  Then
\begin{itemize}
\item[(a)]
$S[a] \subset C$ if and only if $a \in \cd(C)$, and
\item[(b)]
$\cd(C) = \cd(D)$ if and only if $C$ and $D$ are isomorphic (Corollary \ref{disccont}).  
\end{itemize}
Therefore, an automorphism fixing every small compression body is the identity. 
\end {proof}

\textit{\textrm{
\bibliographystyle{amsplain}
\bibliography{biblio}
}}
\end{document}